\newtheorem{theorem}{Theorem}[section]
\newtheorem{corollary}[theorem]{Corollary}
\newtheorem{lemma}[theorem]{Lemma}
\newtheorem{proposition}[theorem]{Proposition}
\theoremstyle{definition}
\newtheorem{definition}[theorem]{Definition}
\newtheorem{remark}{Remark}
\newcommand{\baa}{\begin{array}}
\newcommand{\eaa}{\end{array}}
\newcommand{\ba}{\begin{eqnarray}}
\newcommand{\ea}{\end{eqnarray}}
\newcommand{\be}{\begin{equation}}
\newcommand{\ee}{\end{equation}}
\newcommand{\Bk}{\color{black}}
\def\be{\begin{equation}}
\def\ee{\end{equation}}
\def\bes{\begin{equation*}}
\def\ees{\end{equation*}}
\def\bt{\begin{theorem}}
\def\et{\end{theorem}}
\def\bpr{\begin{proposition}}
\def\epr{\end{proposition}}
\def\dsp{\displaystyle}
\def\N{\mathbb{N}}
\def\R{\mathbb{R}}
\def\C{\mathbb{C}}
\def\L{\Lambda T^{\ast}M}
\def\L1{\Lambda^1 T^{\ast}M}
\def\L0{\Lambda^0 T^{\ast}M}
\def\bp{\begin{proof}}
\def\ep{\end{proof}}
\def\calS{{\mathcal S}}
\def\calQ{{\mathcal Q}}
\def\epsilon{\varepsilon}
\def\1{\mathbbm{1}}
\newenvironment{formula}[1]{\begin{equation}\label{#1}}{\end{equation}\noindent}
\def\Fi#1{\begin{formula}{#1}}
\def\Ff{\end{formula}\noindent}
\newcommand{\cqfd}{\begin{flushright}                  
			 $\Box$
                 \end{flushright}}
\begin{document}

\title{{\bf Hardy spaces on Riemannian manifolds with quadratic curvature decay}}\author{Baptiste Devyver$^{\,\hbox{\small{a,b}}}$ and Emmanuel Russ$^{\,\hbox{\small{b}}}$\\
\\
\footnotesize{$^{\,\hbox{\small{a}}}$ Department of Mathematics, Technion, Israel Institute of Technology, Haifa, 32000, Israel.}\\
\footnotesize{$^{\,\hbox{\small{b}}}$ Universit\'e Grenoble Alpes, Institut Fourier}\\
\footnotesize{100 rue des maths, BP 74, 38402 Saint-Martin d'H\`eres Cedex, France.}}
\date{}

\maketitle

\begin{abstract}
Let $(M,g)$ be a complete Riemannian manifold. Assume that the Ricci curvature of $M$ has quadratic decay and that the volume growth is strictly faster than quadratic. We establish that the Hardy spaces of exact $1$-differential forms on $M$, introduced in \cite{amr}, coincide with the closure in $L^p$ of ${\mathcal R}(d)\cap L^p(\Lambda^1T^{\ast}M)$ when $\frac{\nu}{\nu-1}<p<\nu$, where $\nu>2$ is related to the volume growth. Here and after, $\mathcal{R}(d)$ denotes the range of $d$ as an unbounded operator from $L^2$ to $L^2(\Lambda^1 T^\ast M)$. The range of $p$ is optimal. This result applies, in particular, when $M$ has a finite number of Euclidean ends. 
\end{abstract}
\tableofcontents

\section{Introduction}
\subsection{Motivation}

Let $n\ge 1$ be an integer. The notation $\llbracket 1,n\rrbracket$ denotes the set of integers $j$ satisfying $1\le j\le n$. It is a well-known fact that, for all $j\in \llbracket 1,n\rrbracket$, the Riesz transform $\partial_j(-\Delta)^{-1/2}$ is $L^p(\R^n)$-bounded for all $p\in (1,\infty)$ and $H^1(\R^n)$-bounded, where $H^1(\R^n)$ denotes the real Hardy space. If one seeks for a version of this result in a complete Riemannian manifold $(M,g)$ endowed with its Riemannian measure $\mu$, one has to take into account that the Riesz transform, given by $d\Delta^{-1/2}$ in this context, is $1$-form valued. Motivated by this observation and relying on the connection between Hardy spaces and tent spaces (\cite{cms}), Auscher, McIntosh and the second author introduced, in \cite{amr}, a family of Hardy spaces of exact (resp. co-exact) differential forms on $M$, namely $H^p_d(\Lambda^k T^{\ast}M)$ (resp. $H^p_{d^{\ast}}(\Lambda^k T^{\ast}M)$) for $1\le p\le \infty$ and $0\le k\le \mbox{dim }M$. Denote $\Delta_k=dd^*+d^*d$ the Hodge Laplacian acting on differential forms of degree $k$; in particular, $\Delta_0=\Delta$, the usual Laplacian acting on scalar functions on $M$. In \cite{amr}, it was proved that, under a doubling volume condition for geodesic balls of $M$, the Riesz transform $d\Delta_k^{-1/2}$ acting on exact differential $k$-forms is bounded from $H^p_{d^{\ast}}(\Lambda^k T^{\ast}M)$ to $H^p_d(\Lambda^{k+1} T^{\ast}M)$ for all $k\in \llbracket 0,\mbox{dim }M-1\rrbracket$ and all $1\le p\le \infty$.\par
 With the issue of $L^p$-boundedness of the Riesz transform in mind, one may wonder if $H^p_d(\Lambda^kT^{\ast}M)$ coincides with the 
closure in $L^p$ of ${\mathcal R}(d)\cap L^p(\Lambda^kT^{\ast}M)$ for $1<p<\infty$, as well as the corresponding statement for $H^p_{d^{\ast}}(\Lambda^kT^{\ast}M)$, as in the Euclidean case. In the case of $0$-forms (that is, for functions), it was proved in \cite[Theorem 8.5]{amr} that the answer is positive for $H^p_{d^{\ast}}(\Lambda^0T^{\ast}M)$ if the heat kernel associated with the Laplace-Beltrami operator satisfies Gaussian pointwise upper estimates. A similar statement holds for $H^p_d(\Lambda^1T^{\ast}M)$ if one assumes analogous Gaussian bounds for the heat kernel associated with $\Delta_1$, the Hodge Laplacian on $1$-forms; this is however a much stronger assumption. In particular, it implies the $L^p$-boundedness of $d\Delta^{-1/2}$ for all $1<p<+\infty$ (\cite{cdgeq2,s}). Gaussian bounds for the heat kernels associated with $\Delta_0$ and $\Delta_1$ hold, in particular, if $(M,g)$ has nonnegative Ricci curvature (\cite{LY,bakry1,bakry2}). In the present work we want to compare $H^p$ and $L^p$, avoiding the use of Gaussian bounds for the heat kernel on $1$-forms. A general fact proved in \cite{amr} is that if the measure $\mu$ is doubling, then for all $p\geq 2$ and all $0\leq k\leq \mathrm{dim}\, M$, the 
closure in $L^p$ of ${\mathcal R}(d)\cap L^p(\Lambda^kT^{\ast}M)$ is included in $H^p_{d}(\Lambda^kT^{\ast}M)$. But the inclusion may be strict, as the following example demonstrates: consider the manifold $M$ made of the connected sum of two copies of $\R^n$. It is well-known that the heat kernel of $\Delta_0$ on $M$ has Gaussian estimates, but that the heat kernel of $\Delta_1$ does not, despite $M$ having vanishing Riemannian curvature outside a compact set. If $n\geq 3$ (resp. $n=2$), it was proved in \cite{CCH} that $d\Delta^{-1/2}$ is $L^p$-bounded if and only if $1<p<n$ (resp. $1<p\leq 2$).  and it follows that, on $M$, $H^p_d(\Lambda^1T^{\ast}M)$ and the closure in $L^p$ of ${\mathcal R}(d)\cap L^p(\Lambda^1T^{\ast}M)$ never coincide if $p\ge n$ (resp. $p>2$). However, as a consequence of the main result in the present paper, we shall prove that for the connected sum of two copies of $\R^n$, $n\geq3$, $H^p_d(\Lambda^1T^{\ast}M)$ is \textit{equal} to the closure in $L^p$ of ${\mathcal R}(d)\cap L^p(\Lambda^1T^{\ast}M)$ for all $p\in \left(\frac{n}{n-1},n\right)$. Thus, in this particular example, $H^p_d(\Lambda^1T^{\ast}M)$ is equal to the closure in $L^p$ of ${\mathcal R}(d)\cap L^p(\Lambda^1T^{\ast}M)$, if and only if $p\in \left(\frac{n}{n-1},n\right)$. \par
 More generally, following \cite{carron}, we consider complete Riemannian manifolds $(M,g)$ with a quadratic decay of the Ricci curvature, and, under suitable assumptions on the volume growth of balls in $M$, we prove that $H^p_d(\Lambda^1T^{\ast}M)$ and the closure in $L^p$ of ${\mathcal R}(d)\cap L^p(\Lambda^1T^{\ast}M)$ coincide for $\frac{\nu}{\nu-1}<p<\nu$, where $\nu$ is an exponent related to the volume growth of balls in $M$. In particular, if $n:=\mbox{dim }M>2$ and $M$ has a finite number of Euclidean ends, the conclusion holds with $\nu=n$. Moreover, in the latter situation, we also prove that, for $p\ge n$, the closure in $L^p$ of ${\mathcal R}(d)\cap L^p(\Lambda^1T^{\ast}M)$ is a strict subspace of $H^p_d(\Lambda^1T^{\ast}M)$, unless $M$ has only one end, in which case the two spaces are equal.
 
\subsection{The geometric context}

Throughout the paper, if two quantities $A(f)$ and $B(f)$ depend on a function $f$ ranging over some space $L$, the notation $A(f)\lesssim B(f)$ means that there exists $C>0$ such that $A(f)\leq CB(f)$ for all $f\in L$. Moreover, $A(f)\simeq B(f)$ means that 
$A(f)\lesssim B(f)$ and $B(f)\lesssim A(f)$. \par
\noindent Let $(M,g)$ be a complete Riemannian manifold. Denote by $\mu$ the Riemannian measure and by $d$ the Riemannian distance. For all $x\in M$ and all $r>0$, $B(x,r)$ stands for the open geodesic ball with center $x$ and radius $r$, and $V(x,r):=\mu(B(x,r))$. We assume that the measure $\mu$ is doubling: for all $x\in M$ and all $r>0$,
\begin{equation} \label{DV} \tag{D}
V(x,2r)\lesssim V(x,r).
\end{equation}
By iteration, this condition implies at once that there exists $D>0$ such that for all $x\in M$ and all $0<r<R$,

\begin{equation}\label{VD}\tag{VD}
V(x,R)\lesssim \left(\frac{R}{r}\right)^DV(x,r).
\end{equation}
We also consider a reverse doubling volume condition: there exists $\nu>0$ such that, for all $x\in M$ and all $0<r<R$,
\begin{equation} \label{reverseDV}\tag{RD}
\left(\frac Rr\right)^{\nu}V(x,r) \lesssim V(x,R) .
\end{equation}
When $M$ is connected, \eqref{reverseDV} follows from \eqref{DV} (see \cite[Chapter 15, p. 412]{grigobook}); furthermore, the exponent $\nu$ is related to lower bounds for the $p$-capacity of geodesic balls, see \cite[Theorem 5.6]{Dev2}.  \par
\noindent Fix $o\in M$ and set $r(x):=d(o,x)$ for all $x\in M$. We make the following assumption on the Ricci curvature of $M$: denoting $\mathrm{Ric}_x$ the Ricci tensor at the point $x$ and $g_x$ the Riemannian metric at $x$, we assume that there is $\eta\geq0$ such that
\begin{equation} \label{eq:QD}\tag{QD}
\mbox{Ric}_x\geq -\frac{\eta^2}{r^2(x)}g_x,\quad\forall x\in M
\end{equation}
in the sense of quadratic forms. We say that a ball $B(x,r)$ is {\em remote} if $r\leq \frac{r(x)}{2}$. A ball $B(o,r)$ will be called {\em anchored}. The assumption \eqref{eq:QD} on the Ricci curvature implies by the Bishop-Gromov theorem and a simple scaling argument that if $B(x,2r)$ is remote, then $V(x,2r)\lesssim V(x,r)$; hence, by \cite[Prop. 4.7]{GSC}, \eqref{DV} holds if and only if $M$ satisfies the so-called {\em volume comparison} condition, which writes as follows: for every $x\in M$,

\begin{equation}\label{eq:VC}\tag{VC}
V(o,r(x))\lesssim V(x,\frac{r(x)}{2}). 
\end{equation}
We notice also (see \cite{carron}) that \eqref{DV} implies that $M$ has a finite number of ends. Moreover, according to \cite{Buser}, \eqref{eq:QD} implies that remote balls satisfy the scale invariant $L^1$ Poincar\'e inequality: if $B$ is remote and has radius $r$ then

\begin{equation}\label{P}\tag{$P_1$}
||f-f_B||_{L^1(B)}\lesssim r||\nabla f||_{L^1(B)},\quad f\in C^\infty(B),
\end{equation}
where $f_B$ denotes the average of $f$ on $B$, that is $f_B:=V(B)^{-1}\int_B f$.

\noindent For $0\le k\le\mbox{dim }M$, as already said, we denote by $\Delta_k=dd^{\ast}+d^{\ast}d$ the Hodge-Laplacian acting on $k$-forms (here $d$ stands for the exterior differential and 
$d^{\ast}$ for its adjoint \footnote{The notation $d$ stands both for the exterior differential and for the Riemannian distance, which will cause no confusion.}). Recall that $-\Delta_k$ generates a holomorphic semigroup on $L^2(\Lambda^kT^{\ast}M)$, and the associated heat kernel, namely the kernel of $e^{-t\Delta_k}$, is denoted by $p^k_t$. One denotes $p_t(x,y)$ the scalar heat kernel, i.e. the kernel of $e^{-t\Delta_0}$. We consider the Gaussian upper-bounds for the heat kernel:

\begin{equation}\label{eq:UE}\tag{UE}
p_t(x,y)\lesssim \frac{1}{V(x,\sqrt{t})}\exp\left(-\frac{d^2(x,y)}{ct}\right),\quad \forall t>0,\,\forall x,y\in M.
\end{equation}
Under \eqref{eq:QD} and \eqref{eq:VC}, according to \cite{carron} there is a simple geometric condition ensuring that \eqref{eq:UE} holds:  

\begin{definition}
{\em 

We say that $(M,g)$ with a finite number of ends satisfies the Relative Connectedness in the Ends {\em (RCE)} condition, if there is a constant $\theta\in (0,1)$ such that for any point $x$ with $r(x)\geq1$, there is a continuous path $c:[0,1]\to M$ satisfying

\begin{itemize}

\item $c(0)=x$.

\item the length of $c$ is bounded by $\frac{r(x)}{\theta}$.

\item $c([0,1])\subset B(o,\theta^{-1}r(x))\setminus B(o,\theta r(x))$.

\item there is a geodesic ray $\gamma:[0,+\infty)\to M\setminus B(o,r(x))$ with $\gamma(0)=c(1)$.

\end{itemize}

}
\end{definition}
In simple words, the condition (RCE) says that any point $x$ in $M$ can be connected to an end by a path staying at distance approximately $r(x)$ from the origin $o$. With this definition, \cite[Theorem 2.4]{carron} asserts that under \eqref{eq:QD}, \eqref{eq:VC} and (RCE), the Gaussian upper-estimate \eqref{eq:UE} for the scalar heat kernel holds.

\subsection{Tent and Hardy spaces} 
Let us briefly recall here the definitions of Hardy spaces of differential forms on $(M,g)$ introduced in \cite{amr}. These definitions rely on tent spaces, which we first present. For all $x\in M$ and $\alpha>0$, the cone 
of aperture $\alpha$ and vertex $x$ is the set
\[
\Gamma_{\alpha}(x)=\left\{(y,t)\in M\times \left(0,+\infty\right);\ 
y\in B(x,\alpha t)\right\}.
\]
When $\alpha=1$, we write $\Gamma(x)$ instead of $\Gamma_{1}(x)$. For any closed set $F\subset M$, let ${\mathcal R}(F)$ 
be the union of all cones with aperture $1$ and vertices in $F$. 
Finally, if $O\subset M$ is an open set and $F=M\setminus O$, the 
tent 
over $O$, denoted by $T(O)$, is the complement of ${\mathcal R}(F)$ 
in $M\times \left(0,+\infty\right)$.  \par
\noindent Let $F=(F_t)_{t>0}$ be a family of measurable functions on $M$. Write $F(y,t):=F_t(y)$ for all $y\in M$ and all $t>0$ and assume that $F$ is measurable on $M\times (0,+\infty)$. Define then, for all $x\in M$,
\[
 \calS F(x)=\left(\iint_{\Gamma(x)} \left\vert F(y,t)\right\vert^2 
\frac{d\mu(y)}{V(x,t)} \frac {dt}t\right)^{1/2}
\]
and, if $1\leq p<+\infty$, say that $F\in T^{p,2}(M)$ if
\[
\left\Vert F\right\Vert_{T^{p,2}(M)}:=\left\Vert \calS 
F\right\Vert_{L^p(M)}<+\infty.
\]
Recall that we denote by $d$ the exterior differentiation and by $d^{\ast}$ its adjoint. Define the following Hardy spaces of forms of degree one, for $p=2$:

$$H^2(\Lambda^1 T^\ast M)=\mathrm{Ker}_{L^2}(\Delta_1)^\perp,$$

$$
H^2_d(\Lambda^1T^{\ast}M):=\overline{\{du\in L^2(\Lambda^1T^{\ast}M);u\in L^2(M)\}}^{L^2}\Bk=\overline{dC_0^\infty(\Lambda^0 T^*M)}^{L^2}\Bk,
$$
and 

$$
H^2_{d^*}(\Lambda^1T^{\ast}M):=\overline{\{d^*u\in L^2(\Lambda^1T^{\ast}M);u\in L^2(\Lambda^2 T^\ast M)\}}^{L^2}=\overline{d^*C_0^\infty(\Lambda^2 T^*M)}^{L^2}.
$$
One has the following orthogonal Hodge decomposition for $p=2$:
\Bk
$$L^2(\Lambda^1 T^* M)=\mathrm{Ker}_{L^2}(\Delta_1)\oplus_\perp H^2(\Lambda^1 T^\ast M)=\mathrm{Ker}_{L^2}(\Delta_1)\oplus_\perp H^2_d(\Lambda^1T^{\ast}M)\oplus_\perp H^2_{d^*}(\Lambda^1T^{\ast}M),$$
and moreover if $M$ is non-parabolic, thus has a Green operator, the Hodge projector $\Pi$ onto exact forms, that is the orthogonal projector onto $H^2_d(\Lambda^1T^{\ast}M)$ in the above orthogonal decomposition, is given by the formula:
\begin{equation} \label{defpi}
\Pi=d\Delta_0^{-1}d^*.
\end{equation}
\begin{remark}
When $M$ is non-parabolic, the operator $\Delta_0^{-1}$ is well-defined, which makes the expression \eqref{defpi} valid. If $M$ is parabolic, it is still possible to write $\Pi=(d\Delta^{-1/2})(d\Delta^{-1/2})^{\ast}$ and $\Pi$ is bounded on $L^2$ (see \cite{Dev4} for details), but \eqref{defpi} is not well-defined. The parabolic situation will not be encountered in the sequel of the present work.
\end{remark}
In this work, we will mainly focus on the scale of Hardy spaces of exact forms, $H^p_d(\Lambda^1T^{\ast}M)$, which we present now in details. Its definition for $p\neq 2$ relies on two operators (\cite[Section 5.3]{amr}):
\begin{definition}
Let $N\ge 0$ be an integer. 
\begin{enumerate}
\item For all $F\in T^{2,2}(M)$, let  
\bes
\calS_{d}^N(F):=\int_0^{+\infty} td (t\Delta)^N e^{-t^2\Delta}F_t\frac{dt}t\in L^2(\Lambda^1T^{\ast}M).
\ees
\item For all $\omega\in L^2(\Lambda^1T^{\ast}M)$ and all $t>0$, let
\bes
(\calQ^N_{d^{\ast}} \omega)_t:=td^{\ast}(t^2\Delta_1)^Ne^{-t^2\Delta_1}\omega\in T^{2,2}(M).
\ees
\end{enumerate}
\end{definition}
\noindent The spectral theorem shows that, for all integers \Bk$N,N^{\prime}\ge 0$\Bk, on $H^2_d(\Lambda^1T^{\ast}M)$, 
\begin{equation} \label{identity}
\calS_d^N\calQ^{N^{\prime}}_{d^{\ast}}=c_{N,N^{\prime}}\mbox{Id}
\end{equation}
for some constant $c_{N,N^{\prime}}>0$.  \par
\noindent We now turn to the definitions of Hardy spaces, starting from Hardy spaces of exact forms:
\begin{definition} \label{Hardyspaces}
Let $N\ge 0$  be an integer and $p\in [1,\infty)$. \Bk If $p>2$, assume moreover that $N\geq \lfloor \frac{D}{2}\rfloor +1$.\Bk
\begin{enumerate}
\item Define
$$
E^p_d(\Lambda^1T^{\ast}M):=\{\omega\in H^2_d(\Lambda^1T^{\ast}M);\ td^{\ast}(t^2\Delta_1)^Ne^{-t^2\Delta_1}\omega\in T^{p,2}(M)\},
$$
equipped with the norm
$$
\left\Vert \omega\right\Vert_{H^p_d(\Lambda^1T^{\ast}M)}=\left\Vert td^{\ast}(t^2\Delta_1)^Ne^{-t^2\Delta_1}\omega\right\Vert_{T^{p,2}(M)}.
$$
\item Let $H^p_d(\Lambda^1T^{\ast}M)$ be the completion of $E^p_d(\Lambda^1T^{\ast}M)$ under the norm $\left\Vert \cdot \right\Vert_{H^p_d(\Lambda^1T^{\ast}M)}$.
\end{enumerate}
\end{definition}
\Bk It turns out that any two $||\cdot||_{H_d^p(\Lambda^1T^*M)}$ norms, defined for two different values of $N$ satisfying the above assumptions, are equivalent, see \cite[Section 5]{amr}; therefore, by a slight abuse of notation, we will just write $||\cdot||_{H_d^p(\Lambda^1T^*M)}$ without mentionning the parameter $N$. \Bk

In some sense, as we already mentioned $H^p_d(\Lambda^1T^{\ast}M)$ is a space of exact forms of degree one. More generally, in \cite{amr}, a scale of Hardy spaces $H^p(\Lambda T^\ast M)$ of forms of any degree, that are not necessarily exact, are defined in a similar fashion. In degree one, the construction is the same, except that the space $E_d^p(\Lambda^1 T^\ast M)$ has to be replaced by 

$$E^p(\Lambda^1 T^\ast M):=\{\omega\in H^2(\Lambda^1T^{\ast}M);\ (t^2\Delta_1)^Ne^{-t^2\Delta_1}\omega\in T^{p,2}(\Lambda^1 T^\ast M)\},$$
where $N\geq 1$ is large enough, endowed with the corresponding norm. Finally, we also mention that Hardy spaces of co-exact forms $H_{d^*}^p(\Lambda^1 T^\ast M)$ can be defined similarly, and one has the following Hodge decomposition in degree one:

$$H^p(\Lambda^1 T^\ast M)=H^p_d(\Lambda^1T^{\ast}M)\oplus H_{d^*}^p(\Lambda^1 T^\ast M),$$
and the sum is topological (see \cite[Theorem 5.14]{amr}). Two other useful facts are the duality of the Hardy spaces, as well as the fact that they interpolate by the complex method. These facts extend to the scale of Hardy spaces of exact (resp. co-exact) forms $H^p_d(\Lambda T^{\ast}M)$ (resp. $H_{d^*}^p(\Lambda T^\ast M)$). More precisely, the duality of Hardy spaces for general forms of degree one is as follows: if $p\in (1,+\infty)$ and $q=p'$ is the conjugate exponent, the pairing 

$$(\omega,\eta)\mapsto \int_M \langle \omega(x),\eta(x)\rangle_x\,d\mu(x),$$
(where $\langle \cdot,\cdot\rangle_x$ denotes the complex inner product in $T_x^{\ast}M$ induced by the Riemannian metric), initially defined on $E^p(\Lambda^1 T^\ast M)\times E^q(\Lambda^1 T^\ast M)\subset L^2(\Lambda^1 T^\ast M)\times L^2(\Lambda^1 T^\ast M)$, extends uniquely to a pairing on $H^p(\Lambda^1 T^\ast M)\times H^q(\Lambda^1 T^\ast M)$, which realizes $H^q(\Lambda^1 T^\ast M)$ as the dual of $H^p(\Lambda^1 T^\ast M)$ (see \cite[Theorem 5.8]{amr}). Since $d\circ d=0=d^*\circ d^*$, the pairing of any element of $E_d^p(\Lambda^1 T^\ast M)$ (resp. $E_{d^*}^p(\Lambda^1 T^\ast M)$) with an element of $E_{d^*}^q(\Lambda^1 T^\ast M)$ (resp. $E_d^q(\Lambda^1 T^\ast M)$) is equal to zero, one concludes that the dual of $H^p_d(\Lambda^1 T^\ast M)$ is $H^q_{d}(\Lambda^1 T^\ast M)$.

\begin{remark}\label{rem:Epd2}

According to \cite[Proposition 5.4]{amr}, the space 

$$\tilde{E}^p_d(\Lambda^1T^{\ast}M):=\{\omega\in \mathcal{R}(d);\ td^{\ast}(t^2\Delta_1)^Ne^{-t^2\Delta_1}\omega\in T^{p,2}(M)\}$$
is dense in $E_d^p(\Lambda^1 T^\ast M)$. Hence, the Hardy space $H^p_d(\Lambda^1T^{\ast}M)$ can equivalently be defined as the completion of $\tilde{E}^p_d(\Lambda^1T^{\ast}M)$.

\end{remark}
The Hardy space $H_d^p(\Lambda^1T^{\ast}M)$ thus defined is an abstract Banach space, however it is not at all clear from this definition that elements of $H_d^p(\Lambda^1T^{\ast}M)$ can be identified {\em bona fide} with elements of a function space, for instance it is not clear whether there is an embedding $H_d^p(\Lambda^1T^{\ast}M)\hookrightarrow L_{loc}^1(\Lambda^1T^{\ast}M)$. For the case $p\in [1,2)$, this issue has been addressed in \cite{AMM} (see Theorem 1.1 therein), filling a gap in \cite[Corollary 6.3]{amr}. Since this is a quite subtle point, let us elaborate a bit more on this, following the concepts introduced in \cite{AMM}. 

\begin{definition}\label{def:inclusion}
Let $X$ and $Y$ be normed spaces. We write $X\subseteq Y$ when $X$ is a subspace of $Y$, with the property that there exists $C>0$ such that $||x||_Y\leq C||x||_X$ for all $x\in X$, and we write $X=Y$ when $X\subseteq Y\subseteq X$. We also write $X\subsetneqq Y$ if $X\subseteq Y$ but $Y\nsubseteq X$.
\end{definition}
An (abstract) completion $(\mathcal{X},\imath)$ of a normed space $X$ consists of a Banach space $\mathcal{X}$ and an isometry $\imath:X\rightarrow\mathcal{X}$ such that $\imath(X)$ is dense in $\mathcal{X}$. Every normed space $X$ has an abstract completion $\mathcal{X}$, defined as the set of all Cauchy sequences in $X$, quotiented by the following equivalence relation: two Cauchy sequences $(x_n)_{n\in\mathbb{N}}$ and $(y_n)_{n\in\mathbb{N}}$ in $X$  are identified provided that $\left\Vert x_n-y_n \right\Vert_X\to 0$. However, if one wants to realize the completion $\mathcal{X}$ as a function space, this abstract construction is useless. It is convenient to formalise the following related notion.

\begin{definition}\label{def: completion}
Let $X$ be a normed space and suppose that $X\subseteq Y$ for some Banach space $Y$. A Banach space $\widetilde{X}$ is called \textit{the completion of $X$ in $Y$} when $X\subseteq\widetilde{X}\subseteq Y$, the set $X$ is dense in~$\widetilde{X}$, and  $\|x\|_X = \|x\|_{\widetilde{X}}$ for all $x\in X$. 
\end{definition}  
It is easily checked that the completion $\widetilde{X}$ of $X$ in $Y$ is unique whenever it exists. Moreover, the set $\widetilde{X}$ consists of all $x$ in $Y$ for which there is a Cauchy sequence $(x_n)_n$ in $X$ such that $(x_n)_n$ converges to $x$ in $Y$, and the norm $\|x\|_{\widetilde{X}} = \lim_{n\rightarrow\infty} \|x_n\|_{X}$. Let us recall the following characterization from \cite[Proposition 2.2]{AMM}:

\begin{proposition}\label{pro:completion}
Let $X$ be a normed space and suppose  that $X\subseteq Y$ for some Banach space $Y$, so the identity $I:X\rightarrow Y$ is bounded. The following are equivalent:
\begin{enumerate}
\item the completion of $X$ in $Y$ exists;
\item if $(\mathcal{X},\imath)$ is a completion of $X$, then the unique operator $\widetilde{I}$ in $\mathcal{L}(\mathcal{X},Y)$ defined by the commutative diagram below, is injective;
\[\label{fig1}
\xymatrix{
X \ar[d]_-{{\imath}} \ar[r]^-{I}
& Y\\
\mathcal{X}  \ar[ur]_-{\widetilde{I}}
}\]
\item for each Cauchy sequence $(x_n)_n$ in $X$ that converges to $0$ in $Y$, it follows that $(x_n)_n$ converges to $0$ in $X$.
\end{enumerate}
\end{proposition}
It has been established in the proof of \cite[Corollary 6.3]{amr} that under the assumption \eqref{DV}, for any $p\in [1,2)$, one has $E^p_d(\Lambda^1T^{\ast}M)\subseteq L^p(\Lambda^1T^\ast M)$. Then, in \cite[Theorem 1.1]{AMM}, it has been proved that under \eqref{DV} and for any $p\in [1,2)$, the completion $H_d^p(\Lambda^1 T^\ast M)$ of $E^p_d(\Lambda^1T^{\ast}M)$ (or equivalently, of $\tilde{E}_d^p(\Lambda^1 T^\ast M)$) in $L^p(\Lambda^1 T^\ast M)$ exists; \Bk in fact, since $\tilde{E}_d^p(\Lambda^1 T^\ast M)$ is included as a set in $\mathcal{R}(d)$, it follows that the completion of $\tilde{E}_d^p(\Lambda^1 T^\ast M)$ exists in $\overline{\mathcal{R}(d)\cap L^p(\Lambda^1 T^\ast M)}^{L^p}$, which one writes shortly as $H_d^p(\Lambda^1 T^\ast M)\subseteq \overline{\mathcal{R}(d)\cap L^p(\Lambda^1 T^\ast M)}^{L^p}$, $p<2$. In particular, the following inequality holds for $p<2$:

\begin{equation}\label{eq:p<2}
||\omega||_{L^p(\Lambda^1 T^\ast M)}\lesssim ||\omega||_{H_d^p(\Lambda^1 T^\ast M)}=||td^\ast e^{-t^2\Delta_1}\omega||_{T^{p,2}},\quad \forall \omega\in E_d^p(\Lambda^1 T^\ast M)
\end{equation}
The converse inequality is then equivalent to $H_d^p(\Lambda^1 T^\ast M)= \overline{\mathcal{R}(d)\cap L^p(\Lambda^1 T^\ast M)}^{L^p}$; to stress this latter fact, we single it out in a seperate lemma:

\begin{lemma}\label{lem:p<2}

 Assume that $M$ satisfies \eqref{DV} and has Ricci curvature bounded from below. Let $p<2$, then the inequality

\begin{equation}\label{eq:conv-p<2}
||td^\ast e^{-t^2\Delta_1}\omega||_{T^{p,2}}\lesssim ||\omega||_{L^p(\Lambda^1 T^\ast M)},\quad \forall \omega\in H_d^2(\Lambda^1 T^*M)\cap L^p(\Lambda^1 T^\ast M).
\end{equation}
is equivalent to $H_d^p(\Lambda^1 T^\ast M)=\overline{H_d^2(\Lambda^1 T^*M)\cap L^p(\Lambda^1 T^\ast M)}^{L^p}=\overline{\mathcal{R}(d)\cap L^p(\Lambda^1 T^\ast M)}^{L^p}$, in the sense of Definition \ref{def: completion}.

\end{lemma}

\begin{proof}

As mentioned above, one always has the inclusion 

$$H_d^p(\Lambda^1 T^\ast M)\subseteq\overline{\mathcal{R}(d)\cap L^p(\Lambda^1 T^\ast M)}^{L^p}.$$
Assume \eqref{eq:conv-p<2}, it remains to prove the converse inclusion. Notice also that by definition $\mathcal{R}(d)\subset H_d^2$, so

$$\overline{\mathcal{R}(d)\cap L^p(\Lambda^1 T^\ast M)}^{L^p}\subset \overline{ H_d^2(\Lambda^1 T^*M)\cap L^p(\Lambda^1 T^\ast M)}^{L^p}.$$
Let $\omega\in H_d^2(\Lambda^1 T^*M)\cap L^p(\Lambda^1 T^\ast M)$. By \eqref{eq:conv-p<2}, we get that $\omega\in H^p_d(\Lambda^1T^{\ast}M)$ with 
\begin{equation} \label{omegahpd}
\left\Vert \omega\right\Vert_{H^p_d(\Lambda^1T^{\ast}M)}\lesssim \left\Vert \omega\right\Vert_p.
\end{equation}
If $\omega\in \overline{H_d^2(\Lambda^1 T^*M)\cap L^p(\Lambda^1 T^\ast M)}^{L^p}$, there exists a sequence $(\omega_k)_{k\ge 1}\in  H_d^2(\Lambda^1 T^*M)\cap L^p(\Lambda^1 T^\ast M)$ converging to $\omega$ in $L^p(\Lambda^1T^{\ast}M)$. Hence, \eqref{eq:conv-p<2} entails that $(\omega_k)_{k\ge 1}$ is a Cauchy sequence in $H^p_d(\Lambda^1T^{\ast}M)$. As a consequence, this sequence converges in $H^p_d(\Lambda^1T^{\ast}M)$, therefore also in $L^p(\Lambda^1T^{\ast}M)$. Thus, $\omega\in H^p_d(\Lambda^1T^{\ast}M)$ and therefore $\overline{H_d^2(\Lambda^1 T^*M)\cap L^p(\Lambda^1 T^\ast M)}^{L^p}\subseteq H_d^p(\Lambda^1 T^\ast M)$ holds. Finally, we have proved that $H_d^p(\Lambda^1 T^\ast M)=\overline{H_d^2(\Lambda^1 T^*M)\cap L^p(\Lambda^1 T^\ast M))}^{L^p}$, and it follows that

$$H_d^p(\Lambda^1 T^\ast M)=\overline{H_d^2(\Lambda^1 T^*M)\cap L^p(\Lambda^1 T^\ast M)}^{L^p}=\overline{\mathcal{R}(d)\cap L^p(\Lambda^1 T^\ast M)}^{L^p}.$$
Conversely, assume that $H_d^p(\Lambda^1 T^\ast M)=\overline{H_d^2(\Lambda^1 T^*M)\cap L^p(\Lambda^1 T^\ast M)}^{L^p}$. Then,

$$||\omega||_{H_d^p(\Lambda^1T^*M)}=||td^\ast e^{-t^2\Delta_1}\omega||_{T^{p,2}}\simeq ||\omega||_{L^p(\Lambda^1 T^\ast M)},\quad \forall \omega\in E^p_d(\Lambda^1 T^\ast M).$$
Let $\omega\in H_d^2(\Lambda^1 T^*M)\cap L^p(\Lambda^1 T^\ast M)$, then the assumed equality $H_d^p(\Lambda^1 T^\ast M)=\overline{H_d^2(\Lambda^1 T^*M)\cap L^p(\Lambda^1 T^\ast M)}^{L^p}$ implies that $\omega\in H_d^p(\Lambda^1 T^\ast M)$, so that there is a sequence $(\omega_n)_{n\in \N}$ in $E^p_d(\Lambda^1 T^\ast M)$, such that $||\omega_n-\omega||_{H_d^p(\Lambda^1 T^*M)}\simeq ||\omega_n-\omega||_{L^p(\Lambda^1 T^*M)}\to 0$. In particular, this implies that $(td^\ast e^{-t^2\Delta_1}\omega_n)_{n\in \N}$ is a Cauchy sequence in $T^{p,2}$, hence converges to $F\in T^{p,2}$. We also have $||F||_{T^{p,2}}\simeq ||\omega||_{L^p(\Lambda^1 T^*M)}$, therefore, in order to prove \eqref{eq:conv-p<2}, it is enough to prove that $F=td^\ast e^{-t^2\Delta_1}\omega$. Denote $F_n=td^\ast e^{-t^2\Delta_1}\omega_n$. We use the duality between $T^{p,2}(\Lambda^0 T^*M)$ and $T^{q,2}(\Lambda^0 T^*M)$, $q=p^\prime$: let $G\in T^{q,2}(\Lambda^0T^*M)$, then

\begin{eqnarray*}
\int_{M\times (0,+\infty)} F(t,x)G(t,x) \,d\mu(x)\frac{dt}{t}&=& \lim_{n\to\infty} \int_{M\times (0,+\infty)} F_n(t,x)G(t,x) \,d\mu(x)\frac{dt}{t}\\
&=& \lim_{n\to\infty} \int_0^\infty \mathcal{Q}^0_{d^*}\omega_n G\, \frac{dt}{t}\\
&=& \lim_{n\to\infty} \int_M \langle \omega_n,\mathcal{S}^0_{d}G\rangle\,d\mu(x)
\end{eqnarray*}
Let us assume now that $G$ is a smooth, compactly supported function in $M\times (0,+\infty)$. We claim that $\mathcal{S}^0_{d}G\in L^q(\Lambda^1T^{\ast}M)$. Indeed, observe first that, since $M$ satisfies \eqref{DV} and has Ricci curvature bounded from below, there exist $C,c>0$ such that, for all $t\in (0,1)$ and all $x,y\in M$,
\begin{equation} \label{gradpt}
\left\vert \nabla _xp_t(x,y)\right\vert\le \frac{C}{\sqrt{t}V(x,\sqrt{t})} \exp\left(-c\frac{d^2(x,y)}t\right).
\end{equation}
Indeed, arguing as in \cite[p. 488]{Dev2}, one deduces from the lower bound for the Ricci curvature of $M$ that, for all $t\in (0,1)$ and all $x,y\in M$,
\begin{equation} \label{domination}
\left\vert \overrightarrow{p}_t(x,y)\right\vert \lesssim p_t(x,y),
\end{equation}
where $\overrightarrow{p}_t$ denotes the keat kernel on $1$-forms. Inequality \eqref{domination} and \cite[Theorem 16]{davies} entail the existence of $C,c>0$ such that, for all $t\in (0,1)$ and all $x,y\in M$,
\begin{equation} \label{Gaussform}
\left\vert \overrightarrow{p}_t(x,y)\right\vert\le\frac{C}{V(x,\sqrt{t})} \exp\left(-c\frac{d^2(x,y)}t\right).
\end{equation}
Finally, arguing as in the proof of \cite[(5.7)]{cdgeq2}, one derives from \eqref{DV}, \cite[Theorem 16]{davies} and \eqref{Gaussform} that \eqref{gradpt} holds for all $t\in (0,1)$.\par
Thus, \eqref{gradpt}, as well as the compactness of the support of $G$, show that, for all $x\in M$,
$$
\mathcal{S}^0_{d}G(x)\lesssim \mathcal{M}G(x),
$$
where $\mathcal M$ stands for the (uncentered) Hardy-Littlewood maximal function. In turn, this implies that $\mathcal{S}^0_{d}G\in L^q(M)$, as claimed. Using that $\omega_n\to\omega$ in $L^p$, we conclude that

$$\int_{M\times (0,+\infty)}\langle F(t,x),G(t,x)\rangle \,d\mu(x)\frac{dt}{t}=\int_M \langle \omega,\mathcal{S}^0_{d}G\rangle\,d\mu(x),$$
for all $G$ is a smooth, compactly supported function in $M\times (0,+\infty)$. Since the set of such $G$'s is dense in $T^{q,2}(\Lambda^1 T^*M)$, we obtain that $F=(\mathcal{S}^0_{d})^*\omega=\mathcal{Q}^0_{d^*}\omega =td^\ast e^{-t^2\Delta_1}\omega$.

\end{proof}
Let us now turn to the case $p>2$. In this case, as far as we know, it is still unknown whether under the assumption \eqref{DV} only, the completion of $E^p_d(\Lambda^1T^{\ast}M)$ in $L_{loc}^1(\Lambda^1T^{\ast}M)$ exists. Denote $F^p_d(\Lambda^1 T^\ast M):=H^2_d(\Lambda^1 T^\ast M)\cap L^p(\Lambda^1 T^\ast M)$, endowed with the $L^p$ norm. The proof of \cite[Corollary 6.3, (b)]{amr} shows that $F_d^p(\Lambda^1 T^\ast M)\subseteq E_d^p(\Lambda^1 T^\ast M)$. This means in particular that the following inequality holds:

\begin{equation}\label{eq:Lp_inclusion}
||\omega||_{H^p_d(\Lambda^1 T^\ast M)}\lesssim ||\omega||_{L^p(\Lambda^1 T^\ast M)},\quad \forall \omega \in H^2_d(\Lambda^1 T^\ast M)\cap L^p(\Lambda^1 T^\ast M).
\end{equation}
Clearly, by item 3 in Proposition \ref{pro:completion}, the completion of $F_d^p(\Lambda^1 T^\ast M)$ in $L^p(\Lambda^1 T^\ast M)$  therefore exists; in what follows, this completion will be denoted by 

$$\overline{H_d^2(\Lambda^1 T^\ast M)\cap L^p(\Lambda^1 T^\ast M)}^{L^p},$$
or $\overline{F_d^p(\Lambda^1 T^\ast M)}^{L^p}$, or even just $\overline{H_d^2(\Lambda^1 T^\ast M)\cap L^p(\Lambda^1 T^\ast M)}$ to keep notations reasonnably light, when there is no possible confusion. The inclusion $F^p_d(\Lambda^1 T^\ast M)\subseteq E_d^p(\Lambda^1 T^\ast M)$ therefore extends uniquely to an inclusion:
$$\overline{F^p_d(\Lambda^1 T^\ast M)}^{L^p}\subseteq H^p_d(\Lambda^1T^*M).$$
In light of Remark \ref{rem:Epd2}, one can alternatively define the Hardy space $H^p_d(\Lambda^1T^*M)$ as the completion of $\tilde{E}_d^p(\Lambda^1 T^\ast M)$, and the proof of \cite[Corollary 6.3, (b)]{amr} shows that 

$$\mathcal{R}(d)\cap L^p(\Lambda^1 T^\ast M)\subseteq \tilde{E}_d^p(\Lambda^1 T^\ast M),$$
and this inclusion also extends uniquely to an inclusion:

$$\overline{\mathcal{R}(d)\cap L^p(\Lambda^1 T^\ast M)}^{L^p}\subseteq H^p_d(\Lambda^1T^*M).$$
In the sequel, we will be interested in the converse inclusion.

\subsection{Statement of the results}
With these definitions settled, our main result states as follows:

\begin{theorem} \label{comparhardy}
Assume that $(M,g)$ satisfies \eqref{eq:QD}, \eqref{eq:VC}, {\em (RCE)}  and \eqref{reverseDV} with some $\nu>2$. Then, for all $p\in (\frac{\nu}{\nu-1},\nu)$, the completion $H_d^p(\Lambda^1T^{\ast}M)$ of $E^p_d(\Lambda^1T^{\ast}M)$ in $L^p(\Lambda^1T^{\ast}M)$ exists, and moreover this completion satisfies

$$H_d^p(\Lambda^1T^{\ast}M)=\overline{\mathcal{R}(d)\cap L^p(\Lambda^1 T^\ast M)}^{L^p},$$
in the sense of Definition \ref{def:inclusion}.

\end{theorem}

As a corollary, we are able to completely characterize the Hardy space $H^p_d(\Lambda^1 T^{\ast} M)$  for $p\in \left(\frac{n}{n-1},n\right)$, as well as for $p\ge n$ in some cases, where  $n=\mathrm{dim}(M)\geq 3$, for all manifolds having Euclidean or conical ends. Recall first that a manifold $M$ with topological dimension $n$ is said to have a finite number of Euclidean ends if there is a compact set $K\Subset M$, an integer $N\in \mathbb{N}$ and positive real numbers $R_1, \cdots,R_N$ such that $M\setminus K$ is isometric to the disjoint union $\bigsqcup_{i=1}^N \R^n\setminus \overline{B(0,R_i)}$. The sets $\R^n\setminus \overline{B(0,R_i)}$ can thus be identifed isometrically with open subsets of $M$, which are called the Euclidean ends of $M$ (of course, the positive numbers $R_i$ are not unique, so we slightly abuse notations here). More generally, in the same fashion one can define the notion of a manifold with  a finite number of {\em conical ends}, where a conical end $E$ is defined as follows: there exists a compact Riemannian manifold $(\Sigma,h)$ and $R>0$, such that $E=(R,+\infty)\times \Sigma$ endowed with the metric $dr^2+r^2h.$ Note that manifolds with conical ends satisfy assumptions \eqref{eq:QD}, \eqref{eq:VC}, (RCE). Moreover, there exists $C>0$ such that, for all $x\in M$ and all $R>0$, 
$$
C^{-1}R^n\le V(x,R)\le CR^n
$$
so that \eqref{reverseDV} holds with $\nu=n$. All these properties are stated in \cite[Section 7.1]{carron}). For manifolds with a finite number of Euclidean ends, our result writes as follows:

\begin{corollary}\label{cor:eucl_ends}

Let $M$ be a complete Riemannian manifold of dimension $n\geq 3$ with a finite number of Euclidean ends, and let $p\in \left(\frac{n}{n-1},+\infty\right)$;

\begin{itemize}
\item if $p\in \left(\frac{n}{n-1},n\right)$, then $H^p_d(\Lambda^1T^{\ast}M)= \overline{\mathcal{R}(d)\cap L^p(\Lambda^1T^{\ast}M)}^{L^p}$. 

\item if $p\in [n,+\infty)$, and $M$ has only one end, then 
$$H^p_d(\Lambda^1T^{\ast}M)=  \overline{\mathcal{R}(d)\cap L^p(\Lambda^1T^{\ast}M)}^{L^p}$$

\item  if $p\in [n,+\infty)$ and $M$ has two or more ends, the completion of $E_d^p(\Lambda^1T^\ast M)$ in $L^p(\Lambda^1 T^\ast M)$ does not exist. The inclusion $\mathcal{R}(d)\cap L^p(\Lambda^1T^{\ast}M)\subseteq E_d^p(\Lambda^1 T^\ast M)$ extends uniquely to an inclusion:
$$\overline{\mathcal{R}(d)\cap L^p(\Lambda^1T^{\ast}M)}^{L^p}\subseteq H^p_d(\Lambda^1T^*M),$$
but the latter inclusion is strict.
\end{itemize}

\end{corollary}

\begin{proof}

The statement for $\frac{n}{n-1}<p<n$ follows from Theorem \ref{comparhardy}.  

It thus remains to discuss the case $p\in [n,+\infty)$. For $p\geq n$, as already seen, the inclusion

$$\overline{\mathcal{R}(d)\cap L^p(\Lambda^1T^{\ast}M)}^{L^p}\subset H^p_d(\Lambda^1T^*M)$$
holds true. Moreover, by \cite[Theorem 5.16]{amr}, the Riesz transform is bounded from $H^p_{d^{\ast}}(\Lambda^0 T^*M)$ to $H^p_d(\Lambda^1T^*M)$. By the argument in \cite[p. 12-13]{Dev},  \eqref{DV} and \eqref{eq:UE} imply that $H^p_{d^{\ast}}(\Lambda^0 T^*M)\simeq L^p(M)$, hence the Riesz transform $d\Delta_0^{-1/2}$ is bounded from $L^p(M)$ to $H^p_d(\Lambda^1T^*M)$. Since it is known that $d\Delta_0^{-1/2}$ is {\em not} bounded on $L^p$, $p\geq n$, in the case $M$ has several Euclidean ends (see \cite[Corollary 7.5]{CCH}), one concludes that in this case, for $p\in [n,+\infty)$,

$$\overline{\mathcal{R}(d)\cap L^p(\Lambda^1T^{\ast}M)}^{L^p}\subsetneqq H^p_d(\Lambda^1T^*M).$$
If $M$ has only one end, (RCE) is the more familiar (RCA) condition (Relative Connectedness of Annuli) from \cite{GSC}, hence by \cite[Corollary 5.4]{GSC} $M$ satisfies the scaled $L^2$ Poincar\'e inequalities. According to \cite{CCH}, the Riesz transform on $M$ is bounded on $L^p$, for every $p\in (1,+\infty)$; hence, by \cite{Dev}, for every $p\in [n,+\infty)$,

$$H^p_d(\Lambda^1T^{\ast}M)\simeq  \overline{\mathcal{R}(d)\cap L^p(\Lambda^1T^{\ast}M)}^{L^p}.$$

\end{proof}
More generally, we can prove a similar result for manifolds with conical ends:

\begin{corollary} \label{cor:conical_ends}

Let $M$ be a complete Riemannian manifold of dimension $n\geq 3$ with a finite number of conical ends. Define a number $p_*$ as follows: $p_*$ is equal to $n$ if $M$ has two ends or more, whereas if $M$ has only one end which is isometric to $[R,+\infty)\times \Sigma$, one lets

$$p_*=\frac{n}{\frac{n}{2}-\sqrt{\lambda_1+\left(\frac{n-2}{2}\right)^2}}>n,$$
where $\lambda_1>0$ is the first non-zero eigenvalue of the Laplacian on $\Sigma$ (by convention, $p_*=+\infty$ if $\lambda_1\geq n-1$). Then, for all \Bk $p\in \left(\frac{n}{n-1},p_*\right)$, \Bk

$$H^p_d(\Lambda^1T^{\ast}M)\simeq  \overline{\mathcal{R}(d)\cap L^p(\Lambda^1T^{\ast}M)}^{L^p},$$
whereas for all $p\in [p_*,+\infty)$, 

$$\overline{\mathcal{R}(d)\cap L^p(\Lambda^1T^{\ast}M)}^{L^p}\subsetneqq H^p_d(\Lambda^1T^*M).$$

\end{corollary}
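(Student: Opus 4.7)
The plan is to mirror the scheme of Corollary \ref{cor:eucl_ends}: first verify the geometric hypotheses of Theorem \ref{comparhardy} for manifolds with conical ends, then use Theorem \ref{comparhardy} to handle the range $p\in (1,n)$, and finally identify the exact threshold via known $L^p$-boundedness results for the Riesz transform on conical manifolds.

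I would start by checking that any conical end $([R,+\infty)\times \Sigma,\, dr^2+r^2g_\Sigma)$ has Ricci curvature bounded below by $-C/r^2$, so \eqref{eq:QD} holds; on such an end the volume of a ball of radius $s\le r(x)$ centered at $x$ grows like $s^n$ and $V(o,r)\simeq r^n$ for large $r$, which gives \eqref{DV}, \eqref{eq:VC} and \eqref{reverseDV} with $\nu=n$. Since each conical end admits a radial geodesic ray, (RCE) is immediate, so \cite[Theorem 2.4]{carron} yields \eqref{eq:UE}. Applying Theorem \ref{comparhardy} together with the argument of Corollary \ref{cor:eucl_ends} then gives $H^p_d(\Lambda^1T^{\ast}M)\simeq \overline{\mathcal{R}(d)\cap L^p(\Lambda^1T^{\ast}M)}^{L^p}$ for every $p\in (1,n)$. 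If $M$ has two or more ends, the CCH-type analysis \cite{CCH} extended to conical ends shows that $d\Delta_0^{-1/2}$ is not bounded on $L^p$ for $p\geq n$; together with $H^p_{d^{\ast}}(\Lambda^0T^{\ast}M)\simeq L^p(M)$ and the argument of Corollary \ref{cor:eucl_ends}, this yields the strict inclusion for $p\geq n=p_*$, finishing the multiple-ends case.

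For the one-ended case, (RCE) becomes (RCA), so by \cite[Corollary 5.4]{GSC} the scaled $L^2$-Poincar\'e inequalities hold globally, and \cite{Dev} then gives $H^p_{d^{\ast}}(\Lambda^0T^{\ast}M)\simeq L^p(M)$ for every $p\in(1,+\infty)$. The $L^p$-boundedness of $d\Delta_0^{-1/2}$ on a manifold with one conical end is controlled by the indicial roots $\tfrac{2-n}{2}\pm\sqrt{\lambda_1+\tfrac{(n-2)^2}{4}}$ of the radial part of the scalar Laplacian on the cone; the outcome is that $d\Delta_0^{-1/2}$ is $L^p$-bounded exactly for $p\in(1,p_*)$, with $p_*$ as in the statement. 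Combining this $L^p$-boundedness with the isomorphism $H^p_{d^{\ast}}(\Lambda^0)\simeq L^p$ and the $H^p_{d^{\ast}}\to H^p_d$ boundedness from \cite[Theorem 5.16]{amr} produces the equivalence for $p\in(1,p_*)$, while failure of the $L^p$-bound for $p\geq p_*$ gives the strict inclusion just as in Corollary \ref{cor:eucl_ends}.

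The main obstacle is the range $p\in[n,p_*)$ in the one-ended case: Theorem \ref{comparhardy} does not provide the inclusion $H^p_d\subset L^p$ there, and it must be recovered from the identification of $H^p_d(\Lambda^1T^{\ast}M)$ with the image of $L^p(M)\simeq H^p_{d^{\ast}}(\Lambda^0T^{\ast}M)$ under the Riesz transform, which is $L^p$-bounded precisely in this extended range. The work thus reduces to the spectral analysis of the radial part of the Hodge Laplacian on the cone, which is where the formula defining $p_*$ in terms of $\lambda_1(\Sigma)$ originates, and to verifying that this identification transports the $L^p$-boundedness threshold to an $H^p_d$-versus-$\overline{\mathcal{R}(d)\cap L^p}^{L^p}$ threshold at exactly the same value $p_*$.
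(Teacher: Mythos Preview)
Your approach is essentially the same as the paper's, which simply says ``the proof is the same as for Corollary~\ref{cor:eucl_ends}, taking into account that the Riesz transform on $M$ is bounded on $L^p$ if and only if $p<p_*$ (see \cite{GH}).'' Two remarks on efficiency: first, the paper does not re-derive the threshold $p_*$ from indicial roots but quotes it as a known result of Guillarmou--Hassell \cite{GH}, so your spectral analysis of the radial Laplacian is unnecessary; second, the ``main obstacle'' you isolate for $p\in[n,p_*)$ in the one-ended case is handled in the paper (already in the proof of Corollary~\ref{cor:eucl_ends}) by a direct appeal to \cite{Dev}, which gives $H^p_d(\Lambda^1T^{\ast}M)\simeq\overline{\mathcal{R}(d)\cap L^p}^{L^p}$ from the $L^p$-boundedness of the Riesz transform together with the scaled $L^2$ Poincar\'e inequalities---note that you need the Riesz transform to be an \emph{isomorphism} $H^p_{d^{\ast}}(\Lambda^0)\to H^p_d(\Lambda^1)$, not merely bounded as you wrote, and this is what \cite{amr} actually provides.
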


\begin{remark}
The same result holds for \textit{asymptotically conical} manifolds in the sense of \cite{GH}. 
\end{remark}

\begin{proof}
The proof is the same as for Corollary \ref{cor:eucl_ends}, taking into account that the Riesz transform on $M$ is bounded on $L^p$, if and only if $1<p<p_*$ (see \cite{GH}).
\end{proof}

\begin{remark}
As already seen, the conclusion of Theorem \ref{comparhardy} does not hold when $p\ge \nu$. The validity of this conclusion, as well as the corresponding assertions in Corollaries \ref{cor:eucl_ends} and \ref{cor:conical_ends}, when $1<p\le \frac{\nu}{\nu-1}$ is an open problem.
\end{remark}


\subsection{Strategy of the proof}

\noindent Our strategy is as follows. \Bk We distinguish the cases $p<2$, and $p>2$. For the former, we will use the characterization of Lemma \ref{lem:p<2}, while for $p>2$, one will make use of the following result, whose proof is based on the duality of the Hardy spaces:\Bk

\bpr \label{enough}
\Bk Let $p>2$, and denote $q=p'<2$ the conjugate exponent. Assume that the following inequality is satisfied:

\be \label{tq}
\left\Vert td^{\ast}e^{-t^2\Delta_1}\omega\right\Vert_{T^{q,2}(M)}\lesssim \left\Vert \omega\right\Vert_{q},\quad\forall \omega\in L^{q}(\Lambda^1T^{\ast}M),
\ee
Then, the Hardy space $H^p_d(\Lambda^1T^{\ast}M)$ can be realized as the completion of $E_d^p(\Lambda^1 T^\ast M)$ into $L^p(\Lambda^1 T^\ast M)$. Moreover, this completion satisfies

$$H_d^p(\Lambda^1T^{\ast}M)=\overline{\mathcal{R}(d)\cap L^p(\Lambda^1 T^\ast M)}^{L^p},$$
in the sense of Definition \ref{def:inclusion}.\Bk 
\epr

\begin{proof}

%
Let $\eta\in E^p_{d}(\Lambda^1T^{\ast}M)$ and $\omega\in L^{q}(\Lambda^1T^{\ast}M)\cap L^2(\Lambda^1T^*M)$. \Bk Let $N'\geq \lfloor \frac{D}{2}\rfloor +2$ be an integer\Bk. Then, using the duality pairing between $T^{p,2}$ and $T^{q,2}$ (\cite[Section 5, Theorem 2]{cms}) and \eqref{identity}, we get
$$
\begin{array}{lll}
\dsp \int_M \langle \eta(x),\omega(x)\rangle d\mu(x) & = &  \dsp c\int_M \langle \calS_{d}^0\calQ^{N^{\prime}}_{d^{\ast}}\eta(x),\omega(x)\rangle d\mu(x)  \\
& = & \displaystyle c\iint_{M\times (0,+\infty)} \langle \calQ^{N^{\prime}}_{d^{\ast}}\eta(x),  \calQ_{d^{\ast}}^0 \omega(x)\rangle d\mu(x)\frac{dt}t\\
& \leq & \displaystyle c\left\Vert td^{\ast}(t^2\Delta_1)^{N^{\prime}}e^{-t^2\Delta_1}\eta\right\Vert_{T^{p,2}} \left\Vert td^{\ast}e^{-t^2\Delta_1}\omega\right\Vert_{T^{q,2}}\\
& \lesssim & \displaystyle \left\Vert \eta\right\Vert_{H^p_d(\Lambda^1T^{\ast}M)} \left\Vert \omega\right\Vert_{L^{q}(\Lambda^1T^{\ast}M)},\\
\end{array}
$$
where we have used the hypothesis, as well as Definition \ref{Hardyspaces}.
%
By density of $L^{q}(\Lambda^1T^{\ast}M)\cap L^2(\Lambda^1T^*M)$ in $L^{q}(\Lambda^1T^{\ast}M)$, the above inequality extends to all $\omega\in L^{q}(\Lambda^1T^{\ast}M)$. Dividing both sides by $\left\Vert \omega\right\Vert_{L^{q}(\Lambda^1T^{\ast}M)}$ and taking the supremum in $\omega\neq 0$ belonging to $L^{q}(\Lambda^1T^{\ast}M)$, one obtains that for every $\eta\in E^p_{d}(\Lambda^1T^{\ast}M)$,

\begin{equation}\label{eq:Lp_inclusion2}
||\eta||_{L^p(\Lambda^1T^*M)}\lesssim \left\Vert \eta\right\Vert_{H^p_d(\Lambda^1T^{\ast}M)}.
\end{equation}
In particular, $E_d^p(\Lambda^1T^\ast M)\subseteq F_d^p(\Lambda^1 T^\ast M)$, and $\tilde{E}_d^p(\Lambda^1 T^\ast M)\subseteq \mathcal{R}(d)\cap L^p(\Lambda^1 T^\ast M)$. Since $p>2$, as already mentioned previously (cf the proof of \cite[Corollary 6.3, (b)]{amr}), the converse inclusions hold, and one concludes that $E_d^p(\Lambda^1T^\ast M)=F_d^p(\Lambda^1 T^\ast M)$ and $\tilde{E}_d^p(\Lambda^1T^\ast M)=\mathcal{R}(d)\cap L^p(\Lambda^1 T^\ast M)$ in the sense of Definition \ref{def:inclusion}. By item 3 of Proposition \ref{pro:completion}, the completion of $F_d^p(\Lambda^1 T^\ast M)$ in $L^p(\Lambda^1 T^\ast M)$ exists, hence the same is true for $E_d^p(\Lambda^1 T^\ast M)$, and we conclude that the Hardy space $H^p_d(\Lambda^1 T^\ast M)$ can be realized as the completion of $E_d^p(\Lambda^1T^\ast M)$ in $L^p(\Lambda^1 T^\ast M)$. Moreover, since 

$$\tilde{E}_d^p(\Lambda^1T^\ast M)=\mathcal{R}(d)\cap L^p(\Lambda^1 T^\ast M)$$
and $\tilde{E}_d^p(\Lambda^1T^\ast M)$ is dense in $H^p_d(\Lambda^1 T^\ast M)$, one obtains that

$$H^p_d(\Lambda^1 T^\ast M)=\overline{\mathcal{R}(d)\cap L^p(\Lambda^1 T^\ast M)}^{L^p}.$$

%
%
%
%
%
%
%

\end{proof}
According to Lemma \ref{lem:p<2} and Proposition \ref{enough}, in order to prove Theorem \ref{comparhardy}, it is enough to establish that for all \Bk $p\in \left(\frac{\nu}{\nu-1}, 2\right)$\Bk
\Bk
\be \label{tp12}
\left\Vert td^{\ast}e^{-t^2\Delta_1}\omega\right\Vert_{T^{p,2}(M)}\lesssim \left\Vert \omega\right\Vert_{p},\quad \omega\in L^p(\Lambda^1 T^\ast M).
\ee
\Bk
For $p\in (1,\infty)$ and $N\geq 0$ an integer, we will in fact consider the more general inequality:

\be \label{tp2}
\left\Vert td^{\ast}(t^2\Delta_1)^Ne^{-t^2\Delta_1}\omega\right\Vert_{T^{p,2}(M)}\lesssim \left\Vert \omega\right\Vert_{p},\quad \omega\in L^p(\Lambda^1 T^\ast M).
\ee
We now introduce the inequality \eqref{tp2}, in restriction to {\em exact} forms: \Bk more precisely, for every $ \omega$ in the $L^p$ closure of $H_d^2(\Lambda^1 T^* M)\cap L^p(\Lambda^1 T^\ast M)\cap C^\infty(\Lambda^1 T^*M)$,

\begin{equation}\label{tp3}
\left\Vert td^{\ast}(t^2\Delta_1)^Ne^{-t^2\Delta_1}\omega\right\Vert_{T^{p,2}(M)}\lesssim \left\Vert \omega\right\Vert_{p}.
\end{equation}
The reason for adding the condition that $\omega\in C^\infty(\Lambda^1 T^*M)$ is the following: let $\omega\in H_d^2(\Lambda^1 T^* M)\cap  C^\infty(\Lambda^1 T^*M)$, then by definition of $H_d^2(\Lambda^1 T^*M)$, there is a sequence $(f_n)_{n\in\N}$ of smooth, compactly supported function so that $\omega$ is the $L^2$ limit of $(df_n)_{n\in \N}$; then, since $\omega$ is smooth, according to \cite[Lemma 1.11]{carron2} there is $f\in C^\infty(M)$ such that $\omega=df$. Thus, $\omega$ really is an exact $1$-form. Observe also that, if $N$ is large enough and $p>2$, then \eqref{tp3} always holds. Indeed, if $\omega\in H^2_d\cap L^p$, \eqref{tp3} follows from the fact that $\omega\in H^p_d$. The general case follows by approximation.

The following lemma shows that \eqref{tp2} follows from \eqref{tp3} if the \Bk Hodge projector on exact forms $\Pi=d\Delta_0^{-1}d^\ast$ \Bk is bounded in appropriate Lebesgue spaces:

\begin{lemma}\label{lem:tp3to2}

Let \Bk$p\in (1,\infty)$\Bk, and assume that \Bk Hodge projector on exact forms $\Pi=d\Delta_0^{-1}d^\ast$\Bk is bounded on $L^p$. Then, \eqref{tp3} implies \eqref{tp2}.

\end{lemma}

\begin{proof}

Let $\omega\in C_0^\infty(\Lambda^1 T^*M)$. \Bk Since $\Pi : L^2(\Lambda^1 T^* M)\to H_d^2(\Lambda^1 T^* M)$ is always bounded, and since by assumption $\Pi$ is bounded on $L^p$, it follows that $\Pi\omega\in H_d^2(\Lambda^1 T^* M)\cap L^p(\Lambda^1 T^\ast M)\cap C^\infty(\Lambda^1T^*M)$ with $||\Pi\omega||_p\lesssim ||\omega||_p$ \Bk, therefore by \eqref{tp3} one obtains

$$\left\Vert td^{\ast}(t^2\Delta_1)^Ne^{-t^2\Delta_1}\Pi\omega\right\Vert_{T^{p,2}(M)}\lesssim \left\Vert \Pi\omega\right\Vert_{p}\lesssim ||\omega||_p.$$
Therefore, noticing that $td^{\ast}(t^2\Delta_1)^Ne^{-t^2\Delta_1}\omega= td^{\ast}(t^2\Delta_1)^Ne^{-t^2\Delta_1}\Pi\omega$, \eqref{tp2} holds for any $\omega\in _0^\infty(\Lambda^1 T^*M)$. Since $C_0^\infty(\Lambda^1 T^*M)$ is dense in $L^p(\Lambda^1T^*M)$,  we conclude that \eqref{tp2} holds for any $\omega\in L^p(\Lambda^1 T^\ast M)$. 

\end{proof}
We can now make explicit the connections between all these different inequalities, and the Hardy spaces:

\begin{proposition}\label{pro:equi-ineq-hardy}

Assume that $M$ is a complete Riemannian manifold satisfying \eqref{DV} and \eqref{eq:UE}. Let $r\in (2,\infty)$ and denote $s=r'<2$ the conjugate exponent. Recall that $\Pi$ denotes the Hodge projector onto exact $1$-forms. Then, the following are equivalent (in which the equalities are taken in the sense of Definition \ref{def: completion}):

\begin{enumerate}

\item[(i)] $H_d^s(\Lambda^1 T^* M)=\overline{\mathcal{R}(d)\cap L^s(\Lambda^1 T^*M)}^{L^s}$ and $\Pi$ extends to a bounded operator on $L^s$. 

\item[(ii)] for any $p\in [s,r]$, $H_d^p(\Lambda^1 T^* M)=\overline{\mathcal{R}(d)\cap L^p(\Lambda^1 T^*M)}^{L^p}$.

\item[(iii)] for any $p\in [s,2]$, \eqref{tp3} with $N=0$ is satisfied, and $\Pi$ extends to a bounded operator on $L^s$.

\item[(iv)] the inequality \eqref{tp2} with $N=0$ holds for $p=s$, and $\Pi$ extends to a bounded operator on $L^s$.

\end{enumerate}

\end{proposition}

\begin{proof}
(i) $\Rightarrow$ (ii): first, according to Lemma \ref{lem:p<2}, if $p<2$ then $H_d^p(\Lambda^1 T^* M)=\overline{\mathcal{R}(d)\cap L^p(\Lambda^1 T^*M)}^{L^p}$ is equivalent to \eqref{eq:conv-p<2}. Since $\Pi$ is $L^s$-bounded, Lemma \ref{lem:tp3to2} entails that \eqref{tp2} holds for all $\omega\in L^s$. Since \eqref{tp2} clearly holds for $p=2$, it also holds for all $p\in [s,2]$ by interpolation, which implies in turn that $H_d^p(\Lambda^1 T^* M)=\overline{\mathcal{R}(d)\cap L^p(\Lambda^1 T^*M)}^{L^p}$ for all $p\in [s,r]$ (the case $s\le p\le 2$ follows from Lemma \ref{lem:p<2} while the case $2\le p\le r$ is due to Proposition \ref{enough}). \par
\noindent 


\medskip

\Bk (ii) $\Rightarrow$ (iii): clearly, \eqref{tp3} follows from \eqref{eq:conv-p<2} for $p<2$, which holds according to Lemma \ref{lem:p<2}. Since $M$ satisfies \eqref{DV} and \eqref{eq:UE}, for every $p\in (1,+\infty)$, $H^p(\Lambda^0 T^*M)\simeq L^p(\Lambda^0 T^*M)$. Moreover, by \cite{amr}, the Riesz transform $d\Delta_0^{-1/2}$ is bounded from $H^p(\Lambda^0 T^*M)$ into $H^p_d(\Lambda^1 T^*M)$, hence the Riesz transform is bounded in particular on $L^s$ and $L^r$. Since $\Pi=d\Delta_0^{-1}d^*=(d\Delta_0^{-1/2})(d\Delta_0^{-1/2})^* $, we get that $\Pi$ is bounded on $L^s$. Hence, (iii) holds.

\medskip

(iii) $\Rightarrow$ (iv): follows directly from Lemma \ref{lem:tp3to2} since $\Pi$ is $L^s$-bounded.

\medskip

(iv) $\Rightarrow$ (i): clearly, \eqref{tp2} implies \eqref{eq:conv-p<2}, so (i) follows directly from Lemma \ref{lem:p<2}.

\end{proof}

\noindent The key technical result in this work is the following:

\begin{proposition}\label{pro:tp3}

Assume that $(M,g)$ satisfies \eqref{eq:QD}, \eqref{eq:VC}, {\em (RCE)} and \eqref{reverseDV} for some $\nu>2$. Let $p\in (\frac{\nu}{\nu-1},\nu)$, where $\nu$ is the reverse doubling exponent from \eqref{reverseDV}. Then, \eqref{tp3} \Bk with any $N\in\N$ \Bk holds.

\end{proposition}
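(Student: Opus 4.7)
The first move is to use the intertwining $\Delta_1 d = d\Delta_0$, which gives
\[
td^{\ast}e^{-t^2\Delta_1}(df) = td^{\ast}d\,e^{-t^2\Delta_0}f = t\Delta_0 e^{-t^2\Delta_0}f,
\]
so \eqref{tp3} is really the scalar square function estimate $\bigl\|t\Delta_0 e^{-t^2\Delta_0}f\bigr\|_{T^{p,2}(M)} \lesssim \|df\|_p$. The $L^2$ endpoint is automatic: Fubini and the spectral identity $\int_0^{+\infty}(t\lambda)^2 e^{-2t^2\lambda}\frac{dt}{t} = \frac{1}{4}\lambda$ yield $\bigl\|t\Delta_0 e^{-t^2\Delta_0}f\bigr\|_{T^{2,2}(M)} \simeq \|df\|_2$. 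The plan is then to establish the weak-type $(p,p)$ bound
\[
\mu\bigl\{\calS(t\Delta_0 e^{-t^2\Delta_0}f) > \alpha\bigr\} \lesssim \alpha^{-p}\|df\|_p^p
\]
and interpolate with the $L^2$ bound via Marcinkiewicz to reach the whole range $p \in (\nu/(\nu-1),2)$.

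The weak-type estimate is obtained by a Calder\'on--Zygmund decomposition of $f$ at level $\alpha>0$ adapted to $df \in L^p$: one writes $f = g + \sum_i b_i$ where each bad piece $b_i$ is supported in a ball $B_i = B(x_i,r_i)$ with $\int b_i\,d\mu = 0$, $\|db_i\|_p^p \lesssim \alpha^p \mu(B_i)$, $\sum \mu(B_i) \lesssim \alpha^{-p}\|df\|_p^p$, and the good part $g$ satisfies $|dg| \le C\alpha$ a.e., whence $\|dg\|_2^2 \lesssim \alpha^{2-p}\|df\|_p^p$. On remote balls this is the standard Auscher--Coulhon--Duong--Hofmann construction, driven by the scale-invariant Poincar\'e inequality \eqref{P}. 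For balls that are \emph{not} remote -- typically anchored balls around $o$ -- \eqref{P} is not available, and one must substitute a global Sobolev/Hardy-type inequality derived from the reverse doubling \eqref{reverseDV} with $\nu>2$, schematically of the form $\|(f-f_B)/r\|_{L^q(B)} \lesssim \|df\|_{L^q(B)}$ for $q < \nu$. This substitute is available precisely when $q < \nu$, equivalently $p > \nu/(\nu-1)$, which fixes the sharp lower threshold in the proposition.

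With the decomposition in hand, the good part is handled via Chebyshev and the $L^2$ bound. For the bad part we set $E := \bigcup_i 2B_i$ (of measure $\lesssim \alpha^{-p}\|df\|_p^p$) and reduce to showing $\int_{M\setminus E} \calS(t\Delta_0 e^{-t^2\Delta_0}\sum_i b_i)\,d\mu \lesssim \alpha^{1-p}\|df\|_p^p$. Splitting $M \setminus 2B_i$ into dyadic annuli $C_j(B_i) = 2^{j+1}B_i \setminus 2^j B_i$, the crux is the $L^p \to L^2$ off-diagonal bound
\[
\bigl\|\calS(t\Delta_0 e^{-t^2\Delta_0}b_i)\bigr\|_{L^2(C_j(B_i))} \lesssim 2^{-jN}\,V(x_i,2^j r_i)^{\frac12-\frac1p}\,\|db_i\|_{L^p(B_i)}
\]
for $N$ large depending on the doubling exponent $D$ in \eqref{VD}. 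This bound is derived from the Davies--Gaffney estimates for the $1$-form semigroup $e^{-s\Delta_1}$ (valid on any complete Riemannian manifold, with no need for pointwise Gaussian bounds on $p^1_t$) combined with \eqref{VD} to upgrade the natural $L^2\to L^2$ decay into an $L^p \to L^2$ decay. Summing over $i$ and $j$ using the bounded overlap of the $B_i$'s and Cauchy--Schwarz gives the bad part bound.

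The main obstacles are twofold: (i) carrying out the Sobolev Calder\'on--Zygmund decomposition across the non-remote region of $M$, where \eqref{P} fails and must be compensated by a global inequality extracted from \eqref{reverseDV} -- it is precisely here that the threshold $\nu/(\nu-1)$ is forced and its sharpness becomes visible; (ii) producing the $L^p \to L^2$ off-diagonal bound for $td^{\ast}e^{-t^2\Delta_1}$ using only $L^2$ Davies--Gaffney for $\Delta_1$, since pointwise Gaussian bounds on the $1$-form heat kernel are \emph{not} assumed -- this is what makes the argument genuinely different from a direct application of the methods of \cite{amr} under Gaussian bounds on $p^1_t$.
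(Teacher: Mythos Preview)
Your global Calder\'on--Zygmund plan has a real gap: the scale-invariant Poincar\'e inequality \eqref{P} is only known on \emph{remote} (and small anchored) balls under \eqref{eq:QD}; it fails on generic balls of $M$, and in the model case of a connected sum of several copies of $\R^n$ it fails on large anchored balls---this is exactly why the Riesz transform is unbounded for $p\ge n$ there. Your proposed substitute, a Hardy-type inequality ``derived from \eqref{reverseDV}'', is not justified: reverse doubling is a volume condition and does not by itself produce a functional inequality of the form $\|(f-f_B)/r\|_{L^q(B)}\lesssim\|df\|_{L^q(B)}$ on non-remote balls. Without some Poincar\'e-type control on \emph{all} Calder\'on--Zygmund balls $B_i$, you cannot build the Sobolev decomposition with $|dg|\lesssim\alpha$, nor can you pass from $\|b_i\|$ to $\|db_i\|$ in the off-diagonal step.

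The paper circumvents this by a three-regime splitting of the kernel of $td^\ast e^{-t^2\Delta_1}$ \`a la Carron: a ``long-to-short'' piece ($r(y)\ge\kappa r(z)$), a ``short-to-long'' piece ($r(y)<\kappa r(z)$, $d(z,y)\ge\kappa^{-1}r(z)$), and a ``diagonal'' piece ($d(z,y)<\kappa^{-1}r(z)$). The first two regimes are handled by direct pointwise kernel bounds coming from the Li--Yau gradient estimate under \eqref{eq:QD} (not from abstract Davies--Gaffney for $\Delta_1$, which only gives $L^2\!\to\!L^2$); the threshold $p>\nu/(\nu-1)$ actually enters in the short-to-long regime via the convergence of $\int_{r(y)\le A} r(y)^{-p'}\,dy$, which needs $p'<\nu$. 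Only the diagonal regime uses a CZ-type argument, and there it is run \emph{locally} inside the admissible balls $B_\alpha$ of a fixed covering, precisely the balls on which Poincar\'e \emph{is} available. The regime splitting is therefore not a stylistic choice but the mechanism that confines the CZ machinery to regions where its hypotheses hold.
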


%
%
%
%
The remaining of the article will be devoted to the proof of Proposition \ref{pro:tp3}. Assuming for the moment the result of Proposition \ref{pro:tp3}, let us give the proof of Theorem \ref{comparhardy}:

\medskip

\noindent{\em Proof of Theorem \ref{comparhardy}:}  recall (\cite[Theorem A]{carron}) that, under the assumptions of Theorem \ref{comparhardy}, the Riesz transform $d\Delta^{-1/2}$ is $L^q$-bounded for all $q\in (1,\nu)$. This implies, according to Lemma \ref{lem:tp3to2}, that \eqref{tp2} holds. The result then follows from Proposition \ref{enough}.
$\hfill\Box$

\medskip

\noindent In what follows, we establish \eqref{tp3}.\footnote{Our proof of \eqref{tp3} relies on the $L^p$ and the $L^{p^{\prime}}$ boundedness of $d\Delta_0^{-1/2}$ (or alternatively, of the boundedness on $L^{p^\prime}$ of Hodge projector onto exact forms $\Pi$), for $p\in (\frac{\nu}{\nu-1},2)$. A variation on our argument for \eqref{tp3} (with square vertical functionals instead of non-tangential ones) will show that for every $p\in (\frac{\nu}{\nu-1},2)$ and every function $u$, $||d^*\Delta_1^{-1/2}(du)||_p\lesssim ||du||_p$. This is equivalent to $||d^*\Delta_1^{-1/2}\Pi\omega||_p\lesssim ||\Pi\omega||_p$, where $\Pi=d\Delta_0^{-1}d^{\ast}$ is the Hodge projector. It is not clear how to get from this the boundedness of the Riesz transform on $L^{p'}$. It would be more satisfying to recover directly the boundedness of the Riesz transform from our result on $H^p_{d}(\Lambda^1T^*M)$. Note also that it is possible, for asymptotically euclidean manifolds, to establish directly the $L^p$-boundedness of $\Pi$, without going through the continuity of Riesz transforms, see \cite[Lemma 4.5]{KP}. }

Our strategy for \eqref{tp3} is as follows. \Bk First, by density, notice that it suffices to establish \eqref{tp3} for any $\omega\in H_d^2(\Lambda^1 T^* M)\cap L^p(\Lambda^1 T^*M)\cap C^\infty(\Lambda^1 T^*M)$. Since $\omega\in H_d^2(\Lambda^1 T^* M)\cap C^\infty(\Lambda^1 T^*M)$, according to \cite[Lemma 1.11]{carron2}, there existe $f\in C^\infty(M)$ such that $\omega=df$. \Bk Thus, the inequality \eqref{tp3} amounts to
\begin{equation} \label{adup}
\left\Vert {\mathcal A}(df)\right\Vert_p\lesssim \left\Vert  df\right\Vert_p,
\end{equation}
where
\begin{equation} \label{aomega}
{\mathcal A}\omega(x) := \left(\iint_{\Gamma(x)} \left\vert  td^{\ast}(t^2\Delta_1)^Ne^{-t^2\Delta_1}\omega(z)\right\vert^2\frac{d\mu(z)}{V(z,t)}\frac{dt}t\right)^{\frac 12}.
\end{equation}
The spectral theorem implies that

$$\Vert \mathcal{A}\omega \Vert_2\lesssim ||\omega||_2,\quad \omega\in L^2(\Lambda^1T^*M).$$
We express 
$$
td^{\ast}(t^2\Delta_1)^Ne^{-t^2\Delta_1}\omega(z)=\int_M k_t(z,y)\cdot \omega(y)d\mu(y),
$$ 
where $k_t$ is the kernel of $td^{\ast}(t^2\Delta_1)^Ne^{-t^2\Delta_1}$, and plug this expression into \eqref{aomega}. Following ideas of \cite{carron}, we then split the integration domain into three parts, involving different conditions on $t,y,z$. \par
\noindent The first one, called ``long-to-short'', is defined by the conditions $(z,t)\in \Gamma(x)$ and $r(y)\geq \kappa r(z)$.  We establish the part of inequality \eqref{adup} corresponding to this regime thanks to pointwise bounds on $\left\vert k_t\right\vert$, which in turn follow from pointwise Gaussian type bounds on the heat kernel on functions and its gradient. More precisely, we obtain in this way a weak type $(1,1)$ inequality, and the required $L^p$ bound is obtained by interpolation between this weak type $(1,1)$ inequality and a strong type $(2,2)$ inequality. \par
\noindent The second one, called ``short-to-long'', is defined by the conditions $(z,t)\in \Gamma(x)$, $\kappa r(z)>r(y)$ and $d(z,y)\geq \kappa^{-1}r(z)$, and the corresponding part of \eqref{adup} is proved by similar arguments. Note that the part of \eqref{tp3} corresponding to these two regimes holds even if the form $\omega$ is not exact.\par
\noindent The last part of the splitting is the so-called ``diagonal regime'', defined by $(z,t)\in \Gamma(x)$ and $d(z,y)<\kappa^{-1}r(z)$. The proof of the corresponding part in \eqref{adup} is more involved. We use a covering of $M$ by a suitable collection of balls $(B_{\alpha})_{\alpha\in A}$ which are either remote or anchored,  and localize in some sense the operator $\mathcal{A}$ in the balls $B_\alpha$. When $t\ge r_{\alpha}$, a pointwise bound for $\left\vert k_t\right\vert$ is still sufficient. When $t<r_{\alpha}$, we use the fact that $\omega$ is an exact form and, writing $\omega=df$, decompose
$$
\omega= \sum_{\alpha\in A} d(\chi_{\alpha}(f-f_{B_{\alpha}}))-\sum_{\alpha\in A} (f-f_{B_{\alpha}})d\chi_{\alpha}=\sum_{\alpha\in A} df_{\alpha}-\sum_{\alpha\in A}\eta_{\alpha},
$$
where $(\chi_{\alpha})_{\alpha\in A}$ is a special partition of unity associated with the covering $(B_{\alpha})_{\alpha\in A}$. The part corresponding to $df_{\alpha}$ is treated by arguments similar to those used in \cite{Auscher2007}, and relies on $L^1-L^2$ estimates for the heat semigroup of the Hodge-Laplacian acting on {\it exact} $1$-forms (see Lemma \ref{off-diag1} below). Roughly speaking, these estimates hold since
$$
e^{-s\Delta_1}du=de^{-s\Delta_0}u
$$
and pointwise estimates on the gradient of the heat kernel on functions can be used again (note that pointwise bounds on the heat kernel on $1$-forms do not hold in the context of the present paper). \par
\noindent Finally, to treat the terms arising from $\eta_{\alpha}$, we write
$$
d^{\ast}e^{-s\Delta_1}\eta_{\alpha}=e^{-s\Delta_0}d^{\ast}\eta_{\alpha},
$$
and we conclude using pointwise bounds for $e^{-s\Delta_0}$, the inequality $\left\vert d\chi_{\alpha}\right\vert\lesssim r_{\alpha}^{-1}$, and the fact that, due to $L^1$ Poincar\'e inequalities on remote balls,
$$
\left\Vert \eta_{\alpha}\right\Vert_{L^1(B_{\alpha})}\lesssim \frac 1{r_{\alpha}} \left\Vert f-f_{B_{\alpha}}\right\Vert_{L^1(B_{\alpha})}\lesssim \left\Vert df\right\Vert_{L^1(B_{\alpha})}.
$$
The paper is organized as follows. Section \ref{prelest} first presents the covering of $M$ by remote  and anchored balls, as well as the associated partition of unity. We also gather (and give proofs for) various pointwise or integrated estimates involving the heat semigroup on functions or $1$-forms. The proof of \eqref{tp3} is presented in Section \ref{prooftp3}, where the three regimes are successively considered. \par
\noindent  {\bf Acknowledgements: } this work was partly supported by the French ANR project RAGE ANR-18-CE40-0012. The authors thank the department of mathematics at the Technion - Israel Institute of Technology and the Institut Fourier at the Grenoble Alpes University for their hospitality. \Bk They also thank the referees for their careful reading and their very interesting remarks, which helped them to improve the paper. \Bk
\section{Preliminary estimates} \label{prelest}

\subsection{A good covering by admissible balls} \label{goodcover}
 For convenience, let us first gather definitions about balls of $M$ (the first two ones were already introduced before):
 \begin{definition} \label{balls}
Let $x\in M$ and $r>0$. 
\begin{enumerate}
\item The ball $B(x,r)$ is called \textit{remote} if $r\leq \frac{r(x)}{2}$,
\item The ball $B(x,r)$ is called \textit{anchored} if $x=o$,
\item The ball $B(x,r)$ is \textit{admissible} if and only if $B$ is remote or $B$ is anchored and $r(B)\le r(B_0)$, where the ball $B_0$ will be defined in the construction of the covering below.
\end{enumerate}
\end{definition}
We now explain how the assumption on the Ricci curvature allows one to construct a good covering of $M$ by remote and anchored balls, as well as a good partition of unity associated to it. Following \cite[Sections 4.3 and 5.3]{carron}, consider a special covering of $M$ by a countable collection of admissible balls $(B_{\alpha})_{\alpha\in \N}$, with the finite overlap property. Let us briefly recall the construction, for the sake of completeness:
\begin{enumerate}
\item define $B_{0,1}:=B(o,1)$,
\item for all integer $N\ge 0$, since
$$
B\left(o,2^{N+1}\right)\setminus B\left(o,2^N\right)\subset \bigcup_{2^N\le r(x)<2^{N+1}} B\left(x,2^{N-13}\right),
$$
the ``$5r$'' covering lemma (\cite[Theorem 1.2]{H}) provides a collection of points $\left(x_{N+1,i}\right)_{i\in I_N}\in B\left(o,2^{N+1}\right)\setminus B\left(o,2^N\right)$, where the set $I_N$ is at most countable, such that the balls $B\left(x_{N+1,i},2^{N-13}\right)$ are pairwise disjoint and 
$$
B\left(o,2^{N+1}\right)\setminus B\left(o,2^N\right)\subset \bigcup_{i\in I_N} B\left(x_{N+1,i},2^{N-10}\right).
$$
Since, for all $i\in I_N$, $B(x_{N+1,i},2^{N-13})\subset B\left(o,2^{N+2}\right)$ and the balls $B(x_{N+1,i},2^{N-13})$ are pairwise disjoint, the doubling property shows that, for all finite subset $J\subset I_N$,
$$
(\sharp J)V(o,2^{N+2})\le \sum_{i\in J} V\left(x_{N+1,i},2^{N+3}\right)\lesssim \sum_{i\in J} V\left(x_{N+1,i},2^{N-13}\right)\le V\left(o,2^{N+2}\right),
$$
hence the set $I_N$ is actually finite and $\sharp I_N\leq C$ with $C$ independent of $N$. \par
\noindent For all $N\ge 0$ and all $i\in I_N$, denoting $B_{N+1,i}=B\left(x_{N+1,i},2^{N-9}\right)$, the balls $B_{N+1,i}$ and $7B_{N+1,i}$ are remote and satisfy
$$
2^{9}r(B_{N+1,i}) \le r\left(x_{N+1,i}\right)\le 2^{10}r(B_{N+1,i}).
$$
\end{enumerate}
We have constructed a countable family $(B_\alpha)_{\alpha\geq 0}$ of balls covering $M$; actually the family of balls $(\frac{1}{2}B_\alpha)_{\alpha\geq 0}$ also covers $M$ and this will be relevant later. Up to increasing the radius of $B_0$ and deleting balls included in $B_0$, we assume that $B_0$ is the unique ball containing the origin $o$. Denoting the family of balls by $(B_{\alpha})_{\alpha\in \N}$, by $r_{\alpha}$ the radius of $B_{\alpha}$ and by $x_{\alpha}$ its center, then for $\alpha\neq 0$,
\begin{equation} \label{ralpha}
2^{-10}r(x_{\alpha})\leq r_{\alpha}\leq 2^{-9} r(x_{\alpha}).
\end{equation}
In particular, for $\alpha\neq 0$, the balls $B_\alpha$ and $7B_\alpha$ are remote. Also, note that by construction, if $\alpha\neq \beta$ such that $B_\alpha\cap B_\beta\neq \emptyset$, then
\begin{equation}\label{eq:r_alpha}
r_\alpha\simeq r_\beta.
\end{equation}
Another consequence of the construction is that there exists $C\ge 1$ such that, for all $x\in M$, 
$$
\sharp\left\{\alpha\in \N;\ x\in B_{\alpha}\right\}\le C.
$$
In the sequel, if $B\subset M$ is a ball with radius $r(B)$, say that $B$ is admissible if and only if $B$ is remote or $B$ is anchored and $r(B)\le r(B_0)$. We also state for future use (see \eqref{P}):
\begin{lemma} \label{poincadm}
For all admissible balls $B\subset M$ with radius $r(B)$ and all $C^{\infty}$ functions $u\in L^1(B)$:
\begin{enumerate}
\item if $B$ is remote,
\begin{equation} \label{P1rem}
\left\Vert u-u_B\right\Vert_{L^1(B)}\lesssim r(B)\left\Vert du\right\Vert_{L^1(B)},
\end{equation}
\item if $B$ is anchored, \eqref{P1rem} holds, as well as
$$
\left\Vert u-u_{2B}\right\Vert_{L^1(2B)}\lesssim r(B)\left\Vert du\right\Vert_{L^1(2B)}
$$
for all $C^{\infty}$ functions $u\in L^1(2B)$.
\end{enumerate}
\end{lemma}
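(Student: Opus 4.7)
\emph{Proof plan.} For the first item there is nothing to do: when $B$ is remote, inequality \eqref{P1rem} is exactly the scale-invariant $L^1$ Poincar\'e inequality \eqref{P} recalled after \eqref{eq:QD}, itself a classical consequence of Buser's theorem \cite{Buser} applied under the quadratic Ricci lower bound. So the entire content of the lemma lies in the anchored case.

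For that case, I would exploit the fact that admissible anchored balls have uniformly bounded radius: by construction of the covering, any admissible anchored ball $B = B(o, r(B))$ satisfies $r(B) \leq r(B_0)$, so $2B \subset \overline{B(o, 2r(B_0))}$, a \emph{fixed compact subset} of $M$. Since the Ricci tensor is smooth on $M$, it is uniformly bounded below on this compact set: there exists $K = K(r(B_0)) > 0$ with $\mathrm{Ric} \geq -K g$ on $\overline{B(o, 2r(B_0))}$. I would then apply Buser's $L^1$ Poincar\'e inequality on $B$ and on $2B$ (both contained in the region where the Ricci lower bound $-K g$ holds), which yields a constant of the form $r(B) \, e^{c(1 + \sqrt{K}\, r(B))}$. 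Since $r(B) \leq r(B_0)$, the exponential factor is bounded by a constant depending only on $K$ and $r(B_0)$, giving \eqref{P1rem} for anchored $B$; the analogous bound for $2B$ follows from the same reasoning applied to $2B$, whose radius is still controlled by $2 r(B_0)$.

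The only subtlety — a very mild one — is to recognize that the quadratic decay hypothesis \eqref{eq:QD} degenerates at $o$ and so cannot be used directly in the anchored regime; this is harmless because smoothness of the Riemannian metric alone suffices to provide a uniform (possibly very negative) Ricci lower bound on the fixed compact neighborhood of $o$ that contains every admissible anchored ball. Once this is noted, no genuine obstacle remains: the argument reduces to a direct application of Buser on a family of balls of uniformly bounded radius with uniformly bounded Ricci lower bound.
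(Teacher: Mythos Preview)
Your proposal is correct and is precisely the argument the paper has in mind: the lemma is stated there without proof, with only a pointer to \eqref{P} (Buser's inequality under \eqref{eq:QD}), so the content is exactly what you wrote---the remote case is \eqref{P} verbatim, and the anchored case follows because all admissible anchored balls sit inside the fixed compact set $\overline{B(o,2r(B_0))}$, where smoothness of the metric gives a uniform Ricci lower bound and hence a uniform Buser constant.
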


\noindent Let us now construct a suitable partition of unity adapted to the covering $(B_{\alpha})_{\alpha\in \N}$. 
\begin{lemma}\label{lem:part}
There is a partition of unity $(\chi_{\alpha})_{\alpha\in A}$ subordinate to $(B_{\alpha})_{\alpha\in \N}$, satisfying, for $\alpha\in \N$,
\begin{equation} \label{conditionchi}
\left\vert d\chi_{\alpha}\right\vert\lesssim \frac 1{r_{\alpha}+1},\ \left\vert \Delta\chi_{\alpha}\right\vert\lesssim \frac 1{r_{\alpha}^2+1}.
\end{equation}

\end{lemma}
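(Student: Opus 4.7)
The plan is to first build, for each $\alpha\in\N$, a smooth non-negative bump $\tilde\phi_\alpha$ that is bounded below by a positive constant $c_0$ on $\frac12 B_\alpha$, is supported in $B_\alpha$, and satisfies $|d\tilde\phi_\alpha|\lesssim 1/(r_\alpha+1)$ and $|\Delta\tilde\phi_\alpha|\lesssim 1/(r_\alpha^2+1)$; then to normalize by $\chi_\alpha:=\tilde\phi_\alpha/\Sigma$ with $\Sigma:=\sum_\beta\tilde\phi_\beta$ and check that this normalization preserves the derivative bounds. The fact that $(\frac12 B_\alpha)_{\alpha\in\N}$ covers $M$ together with the finite overlap of $(B_\alpha)_{\alpha\in\N}$ will ensure, respectively, $\Sigma\geq c_0$ and $\Sigma\lesssim 1$ pointwise, so that division by $\Sigma$ is harmless.

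For the construction of $\tilde\phi_\alpha$ when $\alpha\neq 0$, the decisive input is the quadratic Ricci decay \eqref{eq:QD}. Since $B_\alpha$ is remote with $r_\alpha\simeq r(x_\alpha)$, \eqref{eq:QD} gives $\mathrm{Ric}\geq -Cr_\alpha^{-2}\,g$ on $2B_\alpha$. Dilating the metric by $r_\alpha^{-2}$ turns $B_\alpha$ into a ball of radius one in a manifold with Ricci bounded below by a uniform constant. In this fixed geometric regime I would manufacture a smooth cutoff equal to $1$ on the half-ball, supported in the full ball, with uniform bounds on its first derivative and its Laplacian --- for example by composing a smooth one-variable bump with a Greene-Wu type regularization of the distance to $x_\alpha$, the Laplacian of the regularized distance being controlled via Laplacian comparison under the Ricci lower bound. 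Scaling back to the original metric converts the uniform bounds into $|d\tilde\phi_\alpha|\lesssim 1/r_\alpha$ and $|\Delta\tilde\phi_\alpha|\lesssim 1/r_\alpha^2$. For the unique anchored ball $B_0$, the radius $r_0$ is bounded, so any smooth bump on the relatively compact set $B_0$ yields bounds of order $1$, which is precisely what the ``$+1$'' in \eqref{conditionchi} accounts for.

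Next I would control $d\Sigma$ and $\Delta\Sigma$. At any $y\in B_\alpha$, the only non-vanishing terms $\tilde\phi_\beta(y)$ correspond to $\beta$ with $y\in B_\beta$, hence with $B_\beta\cap B_\alpha\neq\emptyset$; for all such $\beta$ one has $r_\beta\simeq r_\alpha$ by \eqref{eq:r_alpha}, and there are at most $C$ of them by the finite overlap property. Summing the individual bounds therefore yields $|d\Sigma|(y)\lesssim 1/(r_\alpha+1)$ and $|\Delta\Sigma|(y)\lesssim 1/(r_\alpha^2+1)$. Finally, the quotient rule applied to $\chi_\alpha=\tilde\phi_\alpha/\Sigma$, together with $\Sigma\geq c_0$, turns these estimates into the desired ones: both summands of $d\chi_\alpha=(d\tilde\phi_\alpha)/\Sigma-\tilde\phi_\alpha(d\Sigma)/\Sigma^2$ are $\lesssim 1/(r_\alpha+1)$, and the four-term expansion of $\Delta\chi_\alpha$, including the correction $2\tilde\phi_\alpha|d\Sigma|^2/\Sigma^3$, is $\lesssim 1/(r_\alpha^2+1)$.

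The main technical obstacle is the very first step, namely the construction of smooth bumps with a Laplacian bound on the correct scale: this is where \eqref{eq:QD} genuinely enters, through rescaling, reducing the issue to the by-now-standard existence of such cutoffs on manifolds with a uniform Ricci lower bound. Once this ingredient is in hand, the rest is bookkeeping based on finite overlap and the comparability of radii of intersecting balls.
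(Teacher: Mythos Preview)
Your proposal is correct and follows essentially the same approach as the paper: build scale-invariant cutoffs $\tilde\phi_\alpha$ on each $B_\alpha$ via rescaling and the Ricci lower bound (the paper invokes Cheeger--Colding for this step rather than sketching a Greene--Wu construction), then normalize by the sum $\Sigma$ using finite overlap, the comparability \eqref{eq:r_alpha} of radii of intersecting balls, and the quotient rule. The only cosmetic difference is that the paper takes $\tilde\phi_\alpha\equiv 1$ on $\tfrac12 B_\alpha$ (giving $\Sigma\geq 1$) rather than merely $\tilde\phi_\alpha\geq c_0$ there.
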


\begin{proof}

It is clearly enough to prove the estimates \eqref{conditionchi} only for $\alpha\neq 0$. By \cite[Theorem 6.33]{CC} and a scaling argument, for every $\alpha$, there exists a smooth function $\varphi_\alpha :M\to[0,1]$ such that:

\begin{itemize}

\item[(i)] $\varphi_\alpha|_{\frac{1}{2}B_\alpha}\equiv 1$,

\item[(ii)] The support of $\varphi_\alpha$ is included in the (remote) ball $B_\alpha$,

\item[(iii)] $|\nabla \varphi_\alpha|\lesssim \frac{1}{r_\alpha}$,

\item[(iv)] $|\Delta \varphi_\alpha|\lesssim \frac{1}{r_\alpha^2}$.

\end{itemize} 
Let 

$$\varphi:=\sum_{\alpha}\varphi_\alpha,$$
then $\varphi\geq1$ on $M$ since the family of balls $(B_\alpha)_{\alpha\geq0}$ covers $M$. As a consequence of \eqref{eq:r_alpha}, of the fact that the covering has the finite overlap property, and of (iii) and (iv) above,

\begin{equation}\label{eq:phi}
|\nabla \varphi|\lesssim r_\alpha^{-1},\quad |\Delta\varphi|\lesssim r_\alpha^{-2}\quad \mbox{on }B_\alpha.
\end{equation}
We let

$$\chi_\alpha:=\frac{\varphi_\alpha}{\varphi}.$$
Obviously, $\sum_\alpha \chi_\alpha\equiv 1$, and the support of $\chi_\alpha$ is included in $B_\alpha$. Hence,  $(\chi_{\alpha})_{\alpha\in A}$ is a partition of unity, subordinate to $(B_{\alpha})_{\alpha\in A}$. Let us check that $\chi_\alpha$ has the desired properties. One has

$$\nabla \chi_\alpha=-\frac{\varphi_\alpha\nabla\varphi}{\varphi^2}+\frac{\nabla\varphi_\alpha}{\varphi},$$
which implies that $|\nabla \chi_\alpha|\lesssim r_\alpha^{-1}$ by using \eqref{eq:phi} and $\varphi\geq1$, $0\leq \varphi_\alpha\leq 1$. Next,

$$\begin{array}{rcl}
\Delta\chi_\alpha&=&\displaystyle \frac{\Delta\varphi_\alpha}{\varphi}+\varphi_\alpha \Delta(\varphi^{-1})+2\frac{\nabla \varphi_\alpha\cdot \nabla \varphi}{\varphi^2}\\\\
&=&\displaystyle\frac{\Delta\varphi_\alpha}{\varphi}+\varphi_\alpha \left(-\frac{\Delta \varphi}{\varphi^2}+4\frac{|\nabla\varphi|^2}{\varphi^3}\right)+2\frac{\nabla \varphi_\alpha\cdot \nabla \varphi}{\varphi^2},
\end{array}$$
and it follows from \eqref{eq:phi} and $\varphi\geq1$, $0\leq \varphi_\alpha\leq 1$ that $|\Delta \chi_\alpha|\lesssim r_\alpha^{-2}.$
\end{proof}

\subsection{Heat kernel estimates}

Recall that $p_t$ denotes the kernel of $e^{-t\Delta_0}$. One starts with the following gradient estimates for the heat kernel under the hypothesis \eqref{eq:QD} on the Ricci curvature:

\begin{lemma}\label{lem:grad}

Assume that \eqref{eq:QD} and \eqref{DV} holds. Then, for every $N\in \mathbb{N}$,
$$|\nabla_x \partial_t^N p_{t}(x,y)|\lesssim t^{-N}\left(\frac{1}{\sqrt{t}}+\frac{1}{r(x)+1}\right)\frac{1}{V(x,\sqrt{t})}e^{-c\frac{d^2(x,y)}{t}},\quad t>0,\,x,y\in M.$$
\end{lemma}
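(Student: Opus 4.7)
The plan is to deduce the gradient estimate from a Li--Yau type gradient inequality applied locally on remote balls, and then combine with the Gaussian upper bound \eqref{eq:UE}. The crucial geometric observation is that, although \eqref{eq:QD} does not provide a global lower Ricci bound, it yields a uniform local one on each remote ball: if $z \in B(x, r(x)/2)$, then $r(z) \geq r(x)/2$, and therefore $\mathrm{Ric}_z \geq -4\eta^2/r^2(x)\, g_z$. In other words, the ball $B(x, r(x)/2)$ looks like a ball in a manifold with Ricci bounded below by $-K$ for $K \simeq r(x)^{-2}$, which is exactly the input needed for a local gradient estimate at the scale $R \simeq r(x)$.

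For $x$ with $r(x) \geq 1$, I would fix $y \in M$ and apply a local Li-Yau / Souplet--Zhang gradient estimate to the positive solution $(z,s)\mapsto p_s(z,y)$ of the heat equation on $B(x, r(x)/2)$, with Ricci bound $K \simeq r(x)^{-2}$ and scale $R \simeq r(x)$. This typically yields
\begin{equation*}
\frac{|\nabla_x p_t(x,y)|}{p_t(x,y)} \lesssim \left(\frac{1}{\sqrt{t}} + \frac{1}{r(x)}\right)\bigl(1 + \log(\mathcal{M}/p_t(x,y))\bigr),
\end{equation*}
where $\mathcal{M}$ majorizes $p_t(\cdot,y)$ on the ball. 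Using \eqref{eq:UE} to bound $\mathcal{M}$ from above, and a standard near-diagonal heat kernel lower bound to bound $p_t(x,y)$ from below, the logarithmic factor is controlled by $1 + d^2(x,y)/t$. This polynomial factor is then absorbed into the Gaussian exponential at the cost of a slight degradation of the constant $c$, and multiplying by \eqref{eq:UE} produces the claimed bound with $1/(r(x)+1)$ replaced by $1/r(x) \simeq 1/(r(x)+1)$.

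For $x$ near the origin, i.e.\ $r(x) < 1$, the set $\{z : r(z) \leq 2\}$ is relatively compact by completeness of $M$, so the Ricci curvature admits a uniform lower bound there. Classical (global) Li-Yau, applied say on $B(x,1)$, then gives $|\nabla_x p_t(x,y)|/p_t(x,y) \lesssim 1/\sqrt{t} + 1$, which matches the claim since $r(x)+1 \simeq 1$ in this region; combining with \eqref{eq:UE} finishes this case. The main technical delicacy I anticipate is the large-time regime $t \gtrsim r^2(x)$, in which the heat scale $\sqrt{t}$ exceeds the radius of the ball on which the local Ricci bound is available, so the local Li-Yau estimate does not directly apply over the full time range. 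The natural remedy is to exploit the semigroup property: write $p_t(\cdot,y) = e^{-(t-s)\Delta_0}\bigl(p_s(\cdot,y)\bigr)$ with $s \simeq r^2(x)$, apply the local gradient estimate only to the inner factor where the Li-Yau range is valid, and control the outer propagation using \eqref{eq:UE} together with the gradient-of-heat-kernel bound already proved for times $\leq r^2(x)$. Once this is handled, the rest of the argument is essentially bookkeeping of constants.
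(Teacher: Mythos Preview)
Your approach is essentially the one the paper invokes: it cites \cite[Sections 3.2--3.3]{carron}, where the Li--Yau gradient estimate is applied on remote balls (on which \eqref{eq:QD} yields $\mathrm{Ric}\geq -C r(x)^{-2}g$), combined with \eqref{eq:UE}, and handles the region $r(x)\lesssim 1$ via the global lower Ricci bound there---exactly as you describe.

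One correction: your concern about the regime $t\gtrsim r^2(x)$ is unfounded, and the semigroup splitting you propose is unnecessary. Local parabolic gradient estimates of Li--Yau or Souplet--Zhang type on a cylinder $B(x,R)\times[t/2,t]$ hold for \emph{any} $t>0$, with no constraint linking $R$ and $t$; the resulting bound is of the form $|\nabla u|/u\lesssim (R^{-1}+t^{-1/2}+\sqrt{K})(\cdots)$, and when $\sqrt{t}\geq R$ the term $R^{-1}$ simply dominates. Taking $R\simeq r(x)$ and $K\simeq r(x)^{-2}$ thus yields the factor $t^{-1/2}+r(x)^{-1}$ directly across the full time range.
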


\begin{proof}

The case $N=0$ follows directly from the classical Li-Yau gradient estimate for positive solutions of the heat equation (see \cite[Section 3.2-3.3]{carron}). Note that for $r(x)\lesssim 1$, we use the fact that the Ricci curvature is bounded from below on $M$.

Let us now turn to the case $N\geq 1$. Denote $k_t(x,y)=t^N \nabla_x \partial_t^N p_{t}(x,y)$, then $k_t(x,y)$ is the kernel of the operator $\nabla (t\Delta)^Ne^{-t\Delta}=2^N \left(\nabla e^{-\frac{t}{2}\Delta} \right)\left((\frac{t}{2}\Delta)^Ne^{-\frac{t}{2}\Delta}\right)$. According to \cite[Theorem 4]{D}, $t^N \partial_t^N p_t(x,y)$ has Gaussian estimates, hence the kernel of the operator $(\frac{t}{2}\Delta)^Ne^{-\frac{t}{2}\Delta}$ has Gaussian estimates. By the estimate for $N=0$, the kernel of the operator $\left(\frac{1}{\sqrt{t}}+\frac{1}{r(x)+1}\right)^{-1}\nabla e^{-\frac{t}{2}\Delta}$ has Gaussian estimates. It is a well-known fact that under \eqref{DV}, the composition of two operators whose kernels have Gaussian estimates, also has a kernel with Gaussian estimates \Bk(\cite[Lemma A.5]{BB}) \Bk. Therefore, we obtain the claimed estimate for $k_t(x,y)$.


\end{proof}
By duality, Lemma \ref{lem:grad} has consequences for the heat kernel on $1$-forms; let $k_N(t,x,y)$ be the kernel of $td^{\ast}(t^2\Delta_1)^N \Bk e^{-t^2\Delta_1}$. Then,

\begin{lemma}\label{estimkernel}

 One has, for all $t>0$ and all $x,y\in M$,
$$
\left\vert k_N(t,x,y)\right\vert\lesssim \frac 1{V(y,t)}\left(1+\frac{t}{r(y)+1}\right)e^{-c\frac{d^2(x,y)}{t^2}}.
$$

\end{lemma}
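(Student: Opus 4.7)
The plan is to identify the kernel $k_t(x,y)$ as, up to a constant factor of $t$, the spatial gradient of the scalar heat kernel $p_{t^2}$, and then read off the bound from Lemma \ref{lem:grad}. The key identity is the intertwining $\Delta_1 d = d\Delta_0$ on the domain of $d$, which by the functional calculus gives $e^{-t^2\Delta_1}d = de^{-t^2\Delta_0}$, so that via the $L^2$ duality $\langle d^*\omega,u\rangle = \langle \omega, du\rangle$,
\begin{equation*}
\langle td^*e^{-t^2\Delta_1}\omega, u\rangle_{L^2(M)} = \langle \omega, tde^{-t^2\Delta_0}u\rangle_{L^2(\Lambda^1T^*M)}
\end{equation*}
for all test forms $\omega$ and test functions $u$.

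First I would write out the Schwartz kernel of $tde^{-t^2\Delta_0}$ viewed as an operator from functions to $1$-forms: for $u\in C_c^\infty(M)$,
\begin{equation*}
tde^{-t^2\Delta_0}u(x) = \int_M t\,d_x p_{t^2}(x,y)\, u(y)\, d\mu(y),
\end{equation*}
so its kernel at $(x,y)$ is the cotangent vector $t\,d_x p_{t^2}(x,y)$ at $x$. Taking adjoints interchanges the roles of the two variables and moves the $1$-form slot to the second one: the kernel of $td^*e^{-t^2\Delta_1}$ is then $k_t(x,y) = t\, d_y p_{t^2}(y,x)$, regarded as a cotangent vector at $y$ against which an input $1$-form is contracted.

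Next I would apply symmetry of the scalar heat kernel, $p_{t^2}(y,x) = p_{t^2}(x,y)$, to conclude that $|k_t(x,y)| = t|\nabla_y p_{t^2}(x,y)|$. Lemma \ref{lem:grad}, applied with $t$ replaced by $t^2$ and with the roles of the two spatial variables swapped (which is legitimate thanks to the same symmetry), yields
\begin{equation*}
|\nabla_y p_{t^2}(x,y)| \lesssim \left(\frac{1}{t}+\frac{1}{r(y)+1}\right)\frac{1}{V(y,t)}\,e^{-c d^2(x,y)/t^2},
\end{equation*}
and multiplying by $t$ gives exactly the claimed bound.

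The only point requiring any care is the bookkeeping of tensor slots under the adjoint, so that the "$1$-form at $y$" produced in Step 2 is genuinely identified with $\nabla_y p_{t^2}(x,y)$ once symmetry is used; this is a soft argument and I do not expect any real obstacle. Everything else reduces to the quoted gradient estimate.
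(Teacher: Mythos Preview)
Your argument is correct and is essentially the same as the paper's: both rely on the intertwining $d^{\ast}e^{-t^2\Delta_1}=e^{-t^2\Delta_0}d^{\ast}$ (equivalently, the adjoint of $tde^{-t^2\Delta_0}$) together with the gradient estimate of Lemma~\ref{lem:grad}. The paper carries this out via the bilinear pairing $\langle td^{\ast}e^{-t^2\Delta_1}g,h\rangle=\langle g,tde^{-t^2\Delta_0}h\rangle$, whereas you identify the kernel directly as $k_t(x,y)=t\,d_yp_{t^2}(y,x)$; note that once you have this identification, the symmetry of $p_{t^2}$ is not actually needed, since Lemma~\ref{lem:grad} applied at the point pair $(y,x)$ already gives the bound with $V(y,t)$ and $r(y)$.
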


\begin{proof}

For all $g\in C_0^\infty(\Lambda^1T^{\ast}M)$ and $h\in C_0^\infty(M)$,
$$
\begin{array}{lll}
\dsp \left\vert \int_M td^{\ast} (t^2\Delta_1)^N e^{-t^2\Delta_1}g(x) h(x) d\mu(x) \right\vert & =  & \dsp \left\vert \int_M  (t^2\Delta_1)^N e^{-t^2\Delta_1}g(x)\cdot tdh(x)d\mu(x)\right\vert \\
& = & \dsp \left\vert \int_M g(x)\cdot  (t^2\Delta_1)^N te^{-t^2\Delta_1}dh(x)d\mu(x)\right\vert\\
& = & \dsp\left\vert \int_M g(x)\cdot td (t^2\Delta_0)^N e^{-t^2\Delta_0}h(x)d\mu(x)\right\vert\\
& \lesssim & \dsp \iint_M \left\vert g(x)\right\vert \frac 1{V(x,t)}\left(1+\frac{t}{r(x)+1}\right)e^{-c\frac{d^2(x,y)}{t^2}}\\\\
&& \qquad \qquad \qquad \qquad \qquad \qquad \qquad \times \left\vert h(y)\right\vert d\mu(y)d\mu(x),
\end{array}
$$
where the last line follows from Lemma \ref{lem:grad}. 
\end{proof}
\noindent The following lemma deals with heat kernel estimates for complex time. Before stating the result, define, for all $\theta\in \left(0,\frac{\pi}2\right)$,
$$
\Sigma_{\theta}:=\left\{z\in \C;\ \left\vert \mbox{arg }z\right\vert<\theta\right\}.
$$
\begin{lemma}\label{lem:complex}
Let $\theta<\frac{\pi}{4}$ and $\delta>0$. The operator $V(\cdot,|z|)^{\delta}e^{-z^2\Delta_0}V(\cdot,|z|)^{-\delta}$ has $L^2\to L^2$ off-diagonal estimates for $z\in \Sigma_{\theta}$. More precisely, for every $x,\,y\in M$, and every $z\in \Sigma_\theta$,

$$\left|\left|\chi_{B(y,|z|)}V(\cdot,|z|)^{\delta}e^{-z^2\Delta_0}V(\cdot,|z|)^{-\delta}\chi_{B(x,|z|)}\right|\right|_{2\to 2}\lesssim e^{-C\frac{d^2(x,y)}{|z|^2}}.$$
\end{lemma}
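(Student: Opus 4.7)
The argument splits into two independent parts: a reduction, via volume doubling, to a scalar complex-time Davies-Gaffney estimate, followed by the proof of that estimate using the finite speed of propagation of the associated wave equation.

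For the first part, the idea is that \eqref{DV} implies $V(u,|z|)\simeq V(x_0,|z|)$ whenever $u\in B(x_0,|z|)$. Applied to both cut-off balls $B(x,|z|)$ and $B(y,|z|)$, this lets one replace the multiplicative operators $V(\cdot,|z|)^{\pm\delta}$ by the scalars $V(y,|z|)^{\delta}$ and $V(x,|z|)^{-\delta}$, up to fixed constants depending only on doubling and $\delta$. Iterating \eqref{VD} then controls the ratio $V(y,|z|)/V(x,|z|)$ by $(1+d(x,y)/|z|)^{D}$, which is absorbed into any Gaussian $e^{-Cd^2(x,y)/|z|^2}$ by slightly reducing the exponential constant. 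The remaining task is the unweighted bound
$$
\|\chi_{B(y,|z|)}\,e^{-z^2\Delta_0}\,\chi_{B(x,|z|)}\|_{2\to 2}\lesssim e^{-c d^2(x,y)/|z|^2},\qquad z\in\Sigma_\theta,\ \theta<\pi/4.
$$

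For this scalar Davies-Gaffney bound, the plan is to use the finite propagation speed of the wave semigroup $\cos(s\sqrt{\Delta_0})$, which holds on any complete Riemannian manifold: $\chi_F\cos(s\sqrt{\Delta_0})\chi_E\equiv 0$ for $|s|<d(E,F)$. Injecting the Gaussian subordination identity
$$
e^{-w\lambda}=\frac{1}{\sqrt{4\pi w}}\int_{\R}e^{-s^2/(4w)}\cos(s\sqrt{\lambda})\,ds,\qquad \Re w>0,
$$
at the spectral level with $w=z^2\in\Sigma_{2\theta}$, and using the above support property together with $\|\cos(s\sqrt{\Delta_0})\|_{2\to 2}\leq 1$, reduces the operator norm to the scalar integral
$$
\frac{1}{2\sqrt{\pi}\,|z|}\int_{|s|\geq d(E,F)}\bigl|e^{-s^2/(4z^2)}\bigr|\,ds,
$$
with $E=B(x,|z|)$ and $F=B(y,|z|)$. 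Since $\theta<\pi/4$ ensures $\Re(1/z^2)\geq \cos(2\theta)/|z|^2>0$, a routine Gaussian-tail estimate finishes the argument (the regime $d(x,y)\leq 2|z|$ being trivial as the target Gaussian is then bounded below, and $e^{-z^2\Delta_0}$ is a contraction on $L^2$ for $\Re z^2>0$ by the spectral theorem).

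The main obstacle is giving a clean, self-contained justification of the subordination identity at the operator level together with finite propagation speed in the noncompact setting. An equivalent, well-known route is Davies' perturbation method: for a bounded $1$-Lipschitz $\psi$ and $\rho>0$, one shows that $\|e^{\rho\psi}e^{-z^2\Delta_0}e^{-\rho\psi}\|_{2\to 2}\lesssim e^{C(\theta)\rho^2|z|^2}$ by analytically continuing the real-time bound $\|e^{\rho\psi}e^{-t\Delta_0}e^{-\rho\psi}\|_{2\to 2}\leq e^{\rho^2 t}$ to the sector $\Sigma_{\pi/4}$, then specializes $\psi(u)=d(u,E)\wedge d(E,F)$ and optimizes in $\rho$. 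In both approaches the threshold $\theta<\pi/4$ is necessary: it is equivalent to $\Re(z^2)>0$, which is exactly what makes the Gaussian weight $e^{-s^2/(4z^2)}$ decay.
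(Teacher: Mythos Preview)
Your proposal is correct and follows essentially the same route as the paper: use doubling on each of the two balls to replace the weights $V(\cdot,|z|)^{\pm\delta}$ by scalars, absorb the ratio $(V(y,|z|)/V(x,|z|))^{\delta}\lesssim (1+d(x,y)/|z|)^{\delta D}$ into the Gaussian, and reduce to the unweighted complex-time Davies--Gaffney estimate. The only difference is that the paper simply cites this last estimate from \cite[Prop.~2.1]{Auscher2007}, whereas you supply a self-contained proof sketch via finite propagation speed and the subordination formula (or, alternatively, Davies' perturbation method); your argument is thus slightly more detailed but not genuinely different.
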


\begin{proof}
For a fixed $z\in\Sigma_\theta$, let us consider a covering of $M$ by balls $B_i:=B(x_i,|z|)$, $i\in\mathbb{N}$ with the following property: there exists $N\ge 1$ independent of $z$ such that, for all $x\in M$, at most $N$ balls $B_i$ intersect $B(x,\left\vert z\right\vert)$.\footnote{Indeed, by the ``$5r$'' covering theorem, for any fixed $z$ there is a covering of $M$ with balls $B_i=B(x_i,\left\vert z\right\vert)$ such that the balls $\frac{1}{5}B_i$ are pairwise disjoint. Now, if $x\in M$, $I_x:=\left\{i\,;\, B_i\cap B(x,\left\vert z\right\vert)\neq \emptyset\right\}$ and $i\in I_x$, then by doubling $V(x,\left\vert z\right\vert)\simeq V(B_i)$. Thus, if we call $N_x:=\sharp I_x$, then
$$
N_xV(x,\left\vert z\right\vert)\leq C\sum_{i\in I_x} V(B_i)\leq C\sum_{i\in I_x} V\left(\frac 15B_i\right)\leq C'V(x,\left\vert z\right\vert),$$
where the constant $C>0$ only depends on the doubling constants. It follows that $N_x\leq C'$.} 

Denote $d_{ij}:=d(x_i,x_j)$, and $\chi_i:=\chi_{B_i}$. Then, by the properties of the covering, it is easy to see that it is enough to prove:

$$\left|\left|\chi_{i}V(\cdot,|z|)^{\delta}e^{-z^2\Delta_0}V(\cdot,|z|)^{-\delta}\chi_{j}\right|\right|_{2\to 2}\lesssim e^{-C\frac{d_{ij}^2}{|z|^2}}.$$
By doubling and Davies-Gaffney estimates for complex times (see \cite[Prop 2.1]{Auscher2007}, the proof of which only relies on uniform ellipticity of the operator under consideration),

$$\begin{array}{rcl}
\dsp\left|\left|\chi_{i}V(\cdot,|z|)^{\delta}e^{-z^2\Delta_0}V(\cdot,|z|)^{-\delta}\chi_{j}\right|\right|_{2\to 2}&\lesssim& \dsp\left(\frac{V(x_i,|z|)}{V(x_j,|z|) }\right)^{\delta}\left|\left|\chi_{i}e^{-z^2\Delta_0}\chi_{j}\right|\right|_{2\to 2} \\\\
&\lesssim & \dsp\left(\frac{V(x_j,|z|+d(x_i,x_j))}{V(x_j,|z|) }\right)^{\delta} e^{-c\frac{d_{ij}^2}{|z|^2}}\\\\
&\lesssim & \dsp\left(1+\frac{d_{ij}^2}{|z|^2}\right)^{\delta D}e^{-c\frac{d_{ij}^2}{|z|^2}}\\\\
&\lesssim& e^{-C\frac{d_{ij}^2}{|z|^2}}
\end{array}$$
\end{proof}
\noindent We now turn to a lemma concerning Gaussian kernels. Let

$$K_t(x,y):=\frac{1}{V(x,t)}e^{-c\frac{d^2(x,y)}{t^2}}$$
be a Gaussian kernel, and $K_t$ be the associated integral operator

$$K_tv(x):=\int_MK_t(x,y)v(y)\,d\mu(y),$$
defined for all measurable functions $v$ such that the integral converges.

\begin{lemma}\label{lem:Gauss}

Let $1\leq p\leq q\leq +\infty$, and denote $\gamma_{p,q}=\frac{1}{p}-\frac{1}{q}$. Let $E$ and $F$ be two measurable sets in $M$. Then, for some positive constants $c_1$ and $c_2$, independent of the sets $E$ and $F$, and for all $t>0$,

$$
e^{c_1\frac{d^2(E,F)}{t^2}}||V(\cdot,t)^{\gamma_{p,q}}K_t||_{L^p(E)\to L^q(F)}\leq c_2 
$$
as well as

$$
e^{c_1\frac{d^2(E,F)}{t^2}}||K_tV(\cdot,t)^{\gamma_{p,q}}||_{L^p(E)\to L^q(F)}\leq c_2.
$$
\end{lemma}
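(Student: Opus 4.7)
Our approach is to establish the two endpoint estimates $L^1\to L^1$ and $L^p\to L^\infty$ (with the appropriate volume weight) by direct integration, and then interpolate to cover the full range $1\le p\le q\le\infty$. The second form of the inequality (weight on the right) will follow from the first by noting that, thanks to doubling, the two kernels are comparable up to adjusting the Gaussian constant.

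\textbf{Step 1 (endpoint $p=q=1$ and $p=q=\infty$).} For $x\in F$ and $y\in E$ we have $d(x,y)\ge d(E,F)$, so $e^{-cd^2(x,y)/t^2}\le e^{-c\,d^2(E,F)/(2t^2)}\,e^{-c\,d^2(x,y)/(2t^2)}$. Combined with the classical dyadic annular bound
\[
\int_M\frac{1}{V(x,t)}e^{-cd^2(x,y)/t^2}\,dy\lesssim 1
\]
(which uses only \eqref{VD} and the fact that Gaussian decay beats polynomial growth), Fubini yields the $L^1(E)\to L^1(F)$ bound with constant $e^{-c_1 d^2(E,F)/t^2}$, and the same argument with the roles of $x$ and $y$ reversed yields the $L^\infty(E)\to L^\infty(F)$ bound. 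By Riesz--Thorin applied to $K_t$ itself, this gives the unweighted bound $\|K_t\|_{L^p(E)\to L^p(F)}\lesssim e^{-c_1 d^2(E,F)/t^2}$ for every $p\in[1,\infty]$.

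\textbf{Step 2 (endpoint $L^p\to L^\infty$ with weight).} Fix $p\in[1,\infty]$; I will show $V(\cdot,t)^{1/p}K_t:L^p(E)\to L^\infty(F)$ with the same exponential decay. For $x\in F$, H\"older's inequality gives
\[
V(x,t)^{1/p}|K_t v(x)|\le V(x,t)^{1/p-1}\|v\|_p\Bigl(\int_E e^{-cp'd^2(x,y)/t^2}\,dy\Bigr)^{1/p'}.
\]
Splitting the exponential as above and using the companion bound $\int_M e^{-c d^2(x,y)/t^2}\,dy\lesssim V(x,t)$ (again dyadic annuli plus \eqref{VD}), the last factor is dominated by $V(x,t)^{1/p'}e^{-c_1 d^2(E,F)/t^2}$. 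The volume powers cancel and we obtain the desired pointwise bound.

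\textbf{Step 3 (interpolation).} I then consider, for fixed $p$, the analytic family
\[
T_z := V(\cdot,t)^{z/p}\,K_t,\qquad 0\le\Re z\le 1.
\]
By Step 1, $T_0=K_t:L^p(E)\to L^p(F)$ is bounded with norm $\lesssim e^{-c_1 d^2(E,F)/t^2}$; by Step 2, $T_1:L^p(E)\to L^\infty(F)$ satisfies the same bound. Stein's complex interpolation theorem, applied at $z=\theta:=p\gamma_{p,q}\in[0,1]$ (so $1/q=(1-\theta)/p$ and $\theta/p=\gamma_{p,q}$), produces the desired estimate
\[
\|V(\cdot,t)^{\gamma_{p,q}}K_t\|_{L^p(E)\to L^q(F)}\lesssim e^{-c_1 d^2(E,F)/t^2}.
\]
Finally, by the doubling inequality $V(y,t)\lesssim V(x,t)(1+d(x,y)/t)^D$ (from \eqref{VD}), the kernel of $K_t\,V(\cdot,t)^{\gamma_{p,q}}$, namely $V(x,t)^{-1}V(y,t)^{\gamma_{p,q}}e^{-cd^2/t^2}$, is dominated by $V(x,t)^{\gamma_{p,q}-1}e^{-c'd^2/t^2}$ (absorbing the polynomial factor into the Gaussian with a slightly smaller constant), i.e.\ it is pointwise comparable to the kernel of $V(\cdot,t)^{\gamma_{p,q}}K_t$. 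Thus the second inequality follows from the first with identical constants (up to adjustment).

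\textbf{Main obstacle.} The essentially only subtle point is organizing the interpolation so that both endpoints live on the same ``horizontal'' line $L^p\to\cdot$, which forces the choice $T_z=V(\cdot,t)^{z/p}K_t$ with $p$ fixed, rather than interpolating directly the natural family $V(\cdot,t)^z K_t$ between $L^p\to L^p$ and $L^1\to L^\infty$ (the latter connects endpoints that are not collinear in the Riesz--Thorin diagram in a way that would hit the target line $\gamma=1/p-1/q$). Everything else is bookkeeping with doubling and Gaussian tails.
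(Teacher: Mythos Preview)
Your argument is correct and complete, though it takes a different route from the paper's. One small imprecision: in Step~1, the displayed bound $\int_M V(x,t)^{-1}e^{-cd^2(x,y)/t^2}\,dy\lesssim 1$ (integration in $y$) directly gives the $L^\infty\to L^\infty$ bound, not the $L^1\to L^1$ bound; for the latter you need $\sup_y\int_M V(x,t)^{-1}e^{-cd^2(x,y)/t^2}\,dx\lesssim 1$, which requires the doubling swap $V(x,t)^{-1}\lesssim V(y,t)^{-1}(1+d(x,y)/t)^D$ before integrating. You clearly have this tool in hand (you use it in Step~3), so this is only a matter of attribution.

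As for the comparison: the paper does not use interpolation at all. It first localizes, proving $\|\chi_{B(x_0,t)}K_t\chi_{A(x_0,t,k)}\|_{p\to q}\lesssim V(x_0,t)^{-\gamma_{p,q}}e^{-Ck^2}$ on balls and dyadic annuli by a direct H\"older estimate, and then invokes a gluing result (\cite[Prop.~2.9]{AO}) to assemble these into the global weighted bound \eqref{eq:K1}. The off-diagonal factor is then obtained, as you also do, by splitting the Gaussian. For the second inequality the paper argues by duality, observing that $K_t^*$ has kernel $V(y,t)^{-1}e^{-cd^2/t^2}$, which is again Gaussian after the doubling swap --- essentially the same pointwise kernel comparison you make, just packaged differently. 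Your approach via Stein interpolation between the endpoints $L^p\to L^p$ and $L^p\to L^\infty$ is more self-contained (it avoids the external reference) at the cost of invoking complex interpolation; the paper's approach is more modular and stays entirely within real methods.
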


\begin{proof}
We first claim that $K_t$ satisfies

\begin{equation}\label{eq:K1}
\sup_{t>0}||V(\cdot,t)^{\gamma_{p,q}}K_t||_{p\to q}<+\infty,
\end{equation}
Indeed, let us denote $A(x,t,0)=B(x,t)$ and $A(x,t,k)=B(x,(k+1)t)\setminus B(x,kt)$, $k\geq1$. Let $x_0\in M$. Then, for all $k\geq 2$, all measurable functions $v$ supported in $A(x_0,t,k)$, and $x\in B(x_0,t)$, one has by doubling and H\"older

$$\begin{array}{rcl}
\dsp |K_tv(x)|&=&\dsp \int_{A(x_0,t,k)}\frac{e^{-c\frac{d^2(x,y)}{t^2}}}{V(x,t)}|v(y)|\,d\mu(y)\\\\
&\lesssim & \dsp\frac{1}{V(x_0,t)}e^{-ck^2}\mu(A(x_0,t,k))^{1-1/p}||v||_p\\\\
&\lesssim & \dsp\frac{1}{V(x_0,t)^{1/p}}e^{-Ck^2} \left\Vert v\right\Vert_p.
\end{array}$$
Therefore,

$$\begin{array}{rcl}
\dsp ||K_tv||_{L^q(B(x_0,t))}&\leq&\dsp V(x_0,t)^{1/q}||K_tv||_{L^\infty(B(x_0,t))}\\\\
&\lesssim& \dsp\frac{1}{V(x_0,t)^{\gamma_{p,q}}}e^{-Ck^2} ||v||_p.
\end{array}$$ 
Consequently,

$$||\chi_{B(x_0,t)}\,K_t\,\chi_{A(x_0,t,k)}||_{p\to q}\lesssim \frac{e^{-Ck^2}}{V(x_0,t)^{\gamma_{p,q}}}.$$
Hence, the proof of \cite[Prop. 2.9]{AO} applies, and gives \eqref{eq:K1}. This implies the result, in the case $d(E,F)=0$. If now $d(E,F)>0$, then for every $u$ with support in $E$ and every $x\in F$,

$$\begin{array}{rcl}
\dsp\left\vert K_tu(x)\right\vert &\leq&\dsp e^{-\frac{c}{2}\frac{d^2(F,E)}{t^2}}\int_E \frac{1}{V(x,t)}e^{-\frac{c}{2}\frac{d^2(x,y)}{t^2}}\, \left\vert u(y)\right\vert \,d\mu(y)\\\\
&=&\dsp e^{-\frac{c}{2}\frac{d^2(F,E)}{t^2}} \int_E\tilde{K}_t(x,y)\, \left\vert u(y)\right\vert \,d\mu(y),
\end{array}$$
where

$$\tilde{K}_t(x,y)=\frac{1}{V(x,t)}e^{-\frac{c}{2}\frac{d^2(x,y)}{t^2}}$$
is a Gaussian kernel. By the above argument, the associated operator $\tilde{K}_t$ satisfies \eqref{eq:K1}, hence with $C=c/2$,

$$
\sup_{t>0}e^{C\frac{d^2(E,F)}{t^2}}||V(\cdot,t)^{\gamma_{p,q}}K_t||_{L^p(E)\to L^q(F)}<+\infty.
$$
Finally, the inequality for $K_tV(\cdot,t)^{\gamma_{p,q}}$ can be proved by duality: indeed, it is equivalent to

$$
\sup_{t>0}e^{C\frac{d^2(E,F)}{t^2}}||V(\cdot,t)^{\gamma_{p,q}}K_t^*||_{L^{q'}(E)\to L^{p'}(F)}<+\infty,
$$
where $p'$ and $q'$ are the conjugate exponent to $p$ and $q$ respectively, and $K_t^*$ is the adjoint operator to $K_t$. 

%

The kernel of $K_t^*$ is

$$K_t^*(x,y)=K_t(y,x)=\frac{1}{V(y,t)}e^{-c\frac{d^2(x,y)}{t^2}},$$
and using the inequality

$$\frac{V(x,t)}{V(y,t)}\leq \frac{V(y,t+d(x,y))}{V(y,t)}\lesssim \left(1+\frac{d(x,y)}{t}\right)^D,$$
it is easily seen that

$$K_t^*(x,y)\lesssim \frac{1}{V(x,t)}e^{-C\frac{d^2(x,y)}{t^2}},$$
hence $K_t^*$ is bounded by a Gaussian kernel. Therefore, the first part of the argument yields the inequality

$$
\sup_{t>0}e^{C\frac{d^2(E,F)}{t^2}}||V(\cdot,t)^{\gamma_{p,q}}K_t^*||_{L^{q'}(E)\to L^{p'}(F)}<+\infty,
$$
which implies

$$
\sup_{t>0}e^{C\frac{d^2(E,F)}{t^2}}||K_tV(\cdot,t)^{\gamma_{p,q}}||_{L^p(E)\to L^q(F)}<+\infty.
$$

\end{proof}
The next two lemmata will be needed in order to control the heat kernel of the Hodge Laplacian acting on exact one-forms.

\begin{lemma}\label{off-diag1}

Let $B$ be a ball such that $2B$ is admissible, and $u$ be a function in $C_0^\infty(B)$. Let $F\subset M$ be such that

$$r(B)\lesssim r(x)+1,\quad \forall x\in F.$$
Then, for every $t>0$,

$$|| V(\cdot,t)^{1/2} e^{-t^2\Delta_1}(du)||_{L^2(F)}\lesssim \left(1+\frac{r(B)}{t}\right)e^{-\frac{cd(F,B)^2}{t^2}}||du||_1.$$

\end{lemma}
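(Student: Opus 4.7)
The plan is to exploit the intertwining identity $e^{-t^2\Delta_1}(du)=de^{-t^2\Delta_0}u$, which transfers the problem from the (poorly controlled) Hodge heat semigroup on $1$-forms to the scalar heat kernel $p_{t^2}$, for which the Li--Yau-type gradient bound of Lemma \ref{lem:grad} is available. Writing
\[
de^{-t^2\Delta_0}u(x)=\int_B \nabla_x p_{t^2}(x,y)\,u(y)\,dy
\]
and applying Lemma \ref{lem:grad} with $t$ replaced by $t^2$, one obtains the pointwise bound
\[
|de^{-t^2\Delta_0}u(x)|\lesssim\Bigl(\tfrac{1}{t}+\tfrac{1}{r(x)+1}\Bigr)\int_B\frac{e^{-cd^2(x,y)/t^2}}{V(x,t)}\,|u(y)|\,dy.
\]
For $x\in F$ the hypothesis $r(B)\lesssim r(x)+1$ gives $\tfrac{1}{r(x)+1}\lesssim \tfrac{1}{r(B)}$, so the prefactor is majorised by $\tfrac{r(B)+t}{t\,r(B)}$, while the $y$-integral is precisely the Gaussian operator $K_t$ of Lemma \ref{lem:Gauss} acting on $|u|\chi_B$.

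I would then multiply by $V(\cdot,t)^{1/2}$, take the $L^2(F)$ norm, and apply Lemma \ref{lem:Gauss} with $(p,q)=(1,2)$ (so $\gamma_{p,q}=1/2$) and sets $E=B$ and $F$ to get
\[
\|V(\cdot,t)^{1/2}\,de^{-t^2\Delta_0}u\|_{L^2(F)}\lesssim \frac{r(B)+t}{t\,r(B)}\,e^{-cd^2(F,B)/t^2}\,\|u\|_{L^1(B)}.
\]
To conclude I need a Poincar\'e-type bound $\|u\|_{L^1(B)}\lesssim r(B)\|du\|_{L^1(B)}$ for the compactly supported function $u\in C_0^\infty(B)$. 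Since $2B$ is admissible, Lemma \ref{poincadm} gives $\|u-u_{2B}\|_{L^1(2B)}\lesssim r(B)\|du\|_{L^1(B)}$ (using that $du$ is supported in $B$). Since $u\equiv 0$ on $2B\setminus B$, restricting the left-hand side to this annulus yields $|u_{2B}|\,\mu(2B\setminus B)\lesssim r(B)\|du\|_{L^1(B)}$, and strict doubling $\mu(2B\setminus B)\gtrsim V(B)$, which follows from the reverse doubling \eqref{reverseDV}, controls $|u_{2B}|V(B)$; combined with the triangle inequality this produces the desired Poincar\'e bound. Substituting collapses the prefactor $\tfrac{r(B)+t}{t\,r(B)}\cdot r(B)=1+\tfrac{r(B)}{t}$, which is the conclusion.

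The main obstacle I foresee is the clean implementation of this Poincar\'e-for-compactly-supported step: it hinges on having strict doubling $V(2B)\geq(1+\varepsilon)V(B)$, which follows from \eqref{reverseDV} but may a priori require working on $2^k B$ for some fixed $k\geq 1$ if the reverse doubling constant is too small; this in turn requires admissibility of the enlarged ball, which is precisely why the hypothesis ``$2B$ admissible'' (and not just ``$B$ admissible'') appears. Once this bookkeeping is settled, the rest of the proof is a direct assembly of the pointwise gradient estimate and Lemma \ref{lem:Gauss}.
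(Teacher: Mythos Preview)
Your proposal is correct and follows the same argument as the paper: intertwine $e^{-t^2\Delta_1}d=de^{-t^2\Delta_0}$, apply the Li--Yau gradient bound (Lemma~\ref{lem:grad}) together with the hypothesis on $F$, invoke Lemma~\ref{lem:Gauss} with $(p,q)=(1,2)$, and finish with the $L^1$ Poincar\'e inequality for compactly supported functions on the admissible ball $2B$. The paper handles the last step by a direct citation (\cite[Lemma 4.2.3]{BGL}) rather than spelling out the strict-doubling argument you describe, but the content is the same and your bookkeeping concern about $\mu(2B\setminus B)\gtrsim V(B)$ is exactly what that cited lemma absorbs.
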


\begin{proof}

For every $x\in F$,

\begin{equation}\label{eq:domin}
\begin{array}{rcl}
\dsp \left\vert e^{-t^2\Delta_1}(du)\right\vert (x)&=&\dsp |\nabla e^{-t^2\Delta_0}u|(x)\\
&\leq &\dsp \int_B |\nabla_x  p_{t^2}(x,y)| |u(y)|\,d\mu(y)\\
&\lesssim& \dsp\left(\frac{1}{t}+\frac{1}{r(x)+1}\right)\int_B K_t(x,y)|u(y)|\,d\mu(y)\\
&\lesssim& \dsp\left(\frac{1}{t}+\frac{1}{r(B)}\right)K_t(|u|)(x),
\end{array}
\end{equation}
where $K_t$ is a Gaussian kernel and we have used the assumption on $F$ and Lemma \ref{lem:grad}. According to Lemma \ref{lem:Gauss}, one gets

$$\sup_{t>0}e^{C\frac{d^2(F,B)}{t^2}}||V^{1/2}(\cdot,t)e^{-t^2\Delta_1}(du)||_{L^2(F)}\lesssim  \left(\frac{1}{t}+\frac{1}{r(B)}\right) ||u||_{L^1(B)}.$$
Since $2B$ is admissible, it supports an $L^1$ Poincar\'e inequality with constant of order $r(B)$ by Lemma \ref{poincadm}. Since $u$ vanishes on $2B\setminus B$, one gets (see \cite[Lemma 4.2.3]{BGL})

\begin{equation}\label{eq:domin_P}
\int_B|u|\lesssim r(B)\int_{2B}|\nabla u|=r(B)\int_B|\nabla u|.
\end{equation}
Therefore, one arrives to

$$\sup_{t>0}e^{C\frac{d^2(F,B)}{t^2}}||V^{1/2}(\cdot,t)e^{-t^2\Delta_1}(du)||_{L^2(F)}\lesssim  \left(\frac{r(B)}{t}+1\right) ||du||_{L^1(B)}.$$
and the result follows.

\end{proof}

\begin{lemma}\label{off-diag2}

Let $B$ be an admissible ball, and $u$ be a function in $C_0^\infty(B)$. Let $0<\theta<\frac{\pi}{2}$, and let $\Sigma_\theta$ denotes the sector of angle $\theta$ in $\mathbb{C}$. Let $F$ be a measurable set in $M$. Let $N\in \mathbb{N}$. Then, for $z\in \Sigma_\theta$, there holds:

$$||V(\cdot,|z|)^{1/2}zd^* (t^2\Delta_1)^N e^{-z^2\Delta_1}(du)||_{L^2(F)}\lesssim e^{-c\frac{d(F,B)^2}{|z|^2}}||du||_1,$$
where the various constants in the inequality are independent of the ball $B$ and the function $u$. 
\end{lemma}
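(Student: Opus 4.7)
The starting point is the intertwining identity $de^{-z^2\Delta_0}=e^{-z^2\Delta_1}d$, which extends from real positive $z$ to $z\in\Sigma_\theta$ by analyticity, yielding the key identity
$$
zd^*e^{-z^2\Delta_1}(du)=zd^*d(e^{-z^2\Delta_0}u)=z\Delta_0 e^{-z^2\Delta_0}u.
$$
After this reformulation the proof splits into two regimes depending on the relative size of $|z|$ and $r_B:=r(B)$, each handled by a different pointwise kernel estimate.

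In the regime $|z|\geq r_B$, I would use the intertwined form directly. Under \eqref{eq:UE}, the real-time Gaussian bound on the kernel of $\Delta_0 e^{-t^2\Delta_0}$ extends by analyticity to complex times (in the spirit of \cite[Lemma 2.2]{Auscher2007}), giving, up to possibly shrinking the sector,
$$
|z\Delta_0 e^{-z^2\Delta_0}(x,y)|\lesssim \frac{1}{|z|\,V(x,|z|)}e^{-cd(x,y)^2/|z|^2}\qquad(z\in\Sigma_\theta).
$$
Applied to $u\in L^1(B)$ and combined with Lemma \ref{lem:Gauss} for the resulting Gaussian kernel, this yields
$$
\|V(\cdot,|z|)^{1/2}z\Delta_0 e^{-z^2\Delta_0}u\|_{L^2(F)}\lesssim \frac{1}{|z|}e^{-cd(F,B)^2/|z|^2}\|u\|_{L^1(B)}.
$$
Since $B$ is admissible, the Poincaré inequality of Lemma \ref{poincadm} together with the vanishing of $u$ outside $B$ (as in the argument leading to \eqref{eq:domin_P}) gives $\|u\|_{L^1(B)}\lesssim r_B\|du\|_{L^1(B)}$; combined with $r_B/|z|\leq 1$, this yields the claim.

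In the regime $|z|<r_B$, I would instead work with the kernel bound of Lemma \ref{estimkernel} for $k_z$, extended to complex $z$ by the same analyticity arguments applied to the gradient estimate of Lemma \ref{lem:grad}. The factor $1+|z|/(r(y)+1)$ appearing in that estimate is uniformly bounded when $y\in B$: if $B$ is remote then $r(y)+1\gtrsim r_B>|z|$, while if $B$ is anchored then $r_B\lesssim 1$ forces $|z|\lesssim 1$ and $r(y)+1\geq 1$. Hence $|k_z(x,y)|\lesssim V(y,|z|)^{-1}e^{-cd(x,y)^2/|z|^2}$ uniformly in $y\in B$, and writing $zd^*e^{-z^2\Delta_1}(du)(x)=\int_B k_z(x,y)\cdot du(y)\,dy$, a direct application of Lemma \ref{lem:Gauss} to the Gaussian kernel $V(y,|z|)^{-1}e^{-cd(x,y)^2/|z|^2}$, this time without passing through any Poincaré inequality, gives the required bound in terms of $\|du\|_{L^1(B)}$.

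The main technical obstacle is the complex-time extension of the pointwise kernel estimates for both $z\Delta_0 e^{-z^2\Delta_0}$ (in the large-$|z|$ regime) and $k_z$ (in the small-$|z|$ regime), that is, upgrading Lemma \ref{lem:grad} and the derivation behind Lemma \ref{estimkernel} from $t>0$ to $z\in\Sigma_\theta$. This follows from the real-time bounds combined with the analyticity of $e^{-s\Delta_0}$ on $\{\Re s>0\}$ and Cauchy's integral formula in the time variable $s=z^2$, at the cost of possibly reducing $\theta$ and the constant $c$ in the Gaussian exponential; the invariance of the final statement under such adjustments makes this harmless.
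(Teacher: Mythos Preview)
Your real-time splitting is essentially the paper's: for large $|z|$ you rewrite $zd^*e^{-z^2\Delta_1}(du)=z\Delta_0e^{-z^2\Delta_0}u$ and combine a kernel bound for $\partial_s p_s$ with the Poincar\'e inequality \eqref{eq:domin_P}, while for small $|z|$ you use the kernel bound of Lemma~\ref{estimkernel} and observe that the factor $1+|z|/(r(y)+1)$ is harmless on an admissible ball. The paper does exactly this for real $t$ (the duality computation there is just the kernel bound of Lemma~\ref{estimkernel} read backwards).

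The gap is in your passage to complex $z$. For the large-$|z|$ regime your argument is sound: once one knows complex-time Gaussian bounds for $p_s$ itself (a standard consequence of \eqref{eq:UE} and doubling), Cauchy's formula in $s$ legitimately transfers them to $\partial_s p_s=-\Delta_0 p_s$, since this is a \emph{time} derivative. But in the small-$|z|$ regime you need a complex-time version of Lemma~\ref{estimkernel}, hence of the \emph{spatial} gradient estimate Lemma~\ref{lem:grad}, and Cauchy's formula in the time variable does nothing for spatial derivatives: integrating $p_w/(w-s)$ over a contour in $w$ recovers $p_s$, not $\nabla_y p_s$, and applying $\nabla_y$ under the integral just reproduces the quantity you are trying to bound at complex $w$ on the contour. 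Nor does Phragm\'en--Lindel\"of help without an a~priori pointwise bound in the sector, which you do not have for $\nabla_y p_s$. The Li--Yau estimate behind Lemma~\ref{lem:grad} is genuinely a real-time parabolic argument.

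The paper sidesteps this entirely: it proves the estimate only for real $t$, and then for $z\in\Sigma_\theta$ writes $z^2=(z')^2+t^2$ with $|z'|\simeq t\simeq|z|$ and factors
\[
V(\cdot,|z|)^{1/2}zd^*e^{-z^2\Delta_1}\simeq \tfrac{z}{t}\bigl(V(\cdot,|z'|)^{1/2}e^{-(z')^2\Delta_0}V(\cdot,|z'|)^{-1/2}\bigr)\bigl(V(\cdot,t)^{1/2}td^*e^{-t^2\Delta_1}\bigr).
\]
The second factor has the desired $L^1\!\to\!L^2$ off-diagonal bound by the real-time case; the first needs only $L^2\!\to\!L^2$ off-diagonal bounds for the weighted complex-time semigroup, which follow from Davies--Gaffney (Lemma~\ref{lem:complex}) and require no pointwise gradient information. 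Your approach can be repaired by the same factorisation carried out at the kernel level (write $\nabla_y p_{z^2}=\nabla_y p_{t^2}\ast p_{(z')^2}$ with real $t$ and convolve the Gaussians), but that is precisely the paper's mechanism, not a Cauchy argument.
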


\begin{proof}

Denote $x_B$ the center of $B$. We start with the case $z=t>0$ positive real number, for which there are two cases: either $t\leq r(x_B)+1$, or $t>r(x_B)+1$. For $t\leq r(x_B)+1$, we proceed by duality:  let $h\in L^2$ with support in $F$, then

$$
\begin{array}{lll}
\dsp \left\vert \int_F V(x,t)^{1/2}td^{\ast} (t^2\Delta_1)^N e^{-t^2\Delta_1}(du)(x)\cdot h(x)d\mu(x) \right\vert   &
\end{array}$$

$$
\begin{array}{lll}
 = & \dsp\left\vert \int_F V(x,t)^{1/2}t (t^2\Delta_0)^N e^{-t^2\Delta_0}(d^*du)(x)\cdot h(x)d\mu(x) \right\vert\\
= & \dsp \left\vert \int_M d^*du(x)\cdot t (t^2\Delta_0)^N e^{-t^2\Delta_0}V(x,t)^{1/2}h(x)d\mu(x)\right\vert \\
 = & \dsp \left\vert \int_B du(x)\cdot \left(t d (t^2\Delta_0)^N e^{-t^2\Delta_0}V(\cdot ,t)^{1/2}h\right)(x)d\mu(x)\right\vert.
\end{array}
$$
However, by Lemma \ref{lem:grad},

$$\left|\left(td(t^2\Delta_0)^N e^{-t^2\Delta_0}V(\cdot,t)^{1/2}h\right)(x)\right|\lesssim \dsp \int_F  \left(1+\frac{t}{r(x)+1}\right)\frac{e^{-c\frac{d^2(x,y)}{t^2}}}{V(x,t)}\,V(y,t)^{1/2}\left\vert h(y)\right\vert \,d\mu(y)$$
Since $t\leq r(x_B)+1$ and $B$ is admissible, it follows that $t\lesssim r(x)+1$ for every $x\in B$. Hence,

$$\left|\left(td (t^2\Delta_0)^N e^{-t^2\Delta_0}V(\cdot,t)^{1/2}h\right)(x)\right|\lesssim \int K_t(x,y)\,V(y,t)^{1/2}|h(y)|\,d\mu(y),\quad x\in B,$$
where $K_t(x,y)$ is a Gaussian kernel. According to Lemma \ref{lem:Gauss}, one obtains

$$\sup_{t>0}e^{C\frac{d^2(F,B)}{t^2}}\left|\left|td  (t^2\Delta_0)^N e^{-t^2\Delta_0}V(\cdot,t)^{1/2}\right|\right|_{L^2(F)\to L^\infty(B)}<+\infty.$$
This implies

$$\dsp \left\vert \int_F V(x,t)^{1/2}td^{\ast} (t^2\Delta_1)^N e^{-t^2\Delta_1}(du)(x)\cdot h(x)d\mu(x) \right\vert\lesssim e^{-C\frac{d^2(F,B)}{t^2}} ||du||_{L^1(B)}\cdot ||h||_{L^2(F)},$$
hence

$$||V(\cdot,t)^{1/2}td^* (t^2\Delta_1)^N e^{-t^2\Delta_1}(du)||_{L^2(F)}\lesssim e^{-c\frac{d(F,B)^2}{t^2}}||du||_1.$$
This proves the result for $z=t\leq r(x_B)+1$. Now, we treat the case $t>r(x_B)+1$: we write
$$
 td^* (t^2\Delta_1)^N e^{-t^2\Delta_1}(du)=\frac 1t (t^2\Delta_0)^{N+1} e^{-t^2\Delta_0}u. 
$$
According to \cite[Theorem 4]{D}, the kernel $s^{N+1}\partial_s^{N+1}p_s(x,y)$ has pointwise Gaussian estimates. Applying this with $s=t^2$ and using Lemma \ref{lem:Gauss},
$$||V(\cdot,t)^{1/2}td^* (t^2\Delta_1)^N e^{-t^2\Delta_1}(du)||_{L^2(F)}\lesssim \frac{1}{t}e^{-c\frac{d^2(F,B)}{t^2}}||u||_1.$$
As in the proof of Lemma \ref{off-diag1}, \eqref{eq:domin_P} yields

$$||V(\cdot,t)^{1/2}td^* (t^2\Delta_1)^N e^{-t^2\Delta_1}(du)||_{L^2(F)}\lesssim \frac{r(B)}{t}e^{-c\frac{d^2(F,B)}{t^2}}||du||_1.$$
Since $B$ is admissible, $r(B)\lesssim r(x_B)+1$, and because $t>r(x_B)+1$, one gets that

$$||V(\cdot,t)^{1/2}td^* (t^2\Delta_1)^N e^{-t^2\Delta_1}(du)||_{L^2(F)}\lesssim e^{-c\frac{d^2(F,B)}{t^2}}||du||_1.$$
This concludes the proof for $z=t>0$ real; it remains to prove the lemma for complex $z$. We write $z^2=(z')^2+t^2$, where $z'\in \Sigma_\mu$ with $\mu>\theta$, $t>0$, and

\begin{equation} \label{zzt}
|z|\simeq |z'|\simeq t.
\end{equation}
Then, one has

$$V(\cdot,|z|)^{1/2}zd^*\Bk (z^2\Delta_1)^N\Bk e^{-z^2\Delta_1}$$
$$\begin{array}{rcl}
=\dsp \left(\frac{z}{t}\right)^{2N+1}\left(V(\cdot,|z|)^{1/2}e^{-(z')^2\Delta_0}V(\cdot,|z|)^{-1/2}\right)\left(V(\cdot,|z|)^{1/2}td^* \Bk (t^2\Delta_1)^N\Bk e^{-t^2\Delta_1}\right).
\end{array}$$
Since \eqref{zzt} holds, Lemma \ref{lem:complex} and the above entail that the $L^2\to L^2$ off-diagonal estimates for $\left(V(\cdot,|z|)^{1/2}e^{-(z')^2\Delta_0}V(\cdot,|z|)^{-1/2}\right)$ and for $\left(V(\cdot,|z^{\prime}|)^{1/2}e^{-(z')^2\Delta_0}V(\cdot,|z'|)^{-1/2}\right)$ are comparable. The same is true for the $L^1\to L^2$ off-diagonal estimates for $\left(V(\cdot,\left\vert z\right\vert)^{1/2}td^* (t^2\Delta_1)^N e^{-t^2\Delta_1}\right)$ and $\left(V(\cdot,t)^{1/2}td^*(t^2\Delta_1)^N e^{-t^2\Delta_1}\right)$. Since the term $\frac{z}{t}$ is bounded, the composition lemma (see \cite[Proposition 3.1]{Auscher2007} for the Euclidean case) yields the $L^1\to L^2$ off-diagonal estimates for the composed operator 

$$\left(V(\cdot,|z'|)^{1/2}e^{-(z')^2\Delta_0}V(\cdot,|z'|)^{-1/2}\right)\left(V(\cdot,t)^{1/2}td^*\Bk (t^2\Delta_1)^N\Bk  e^{-t^2\Delta_1}\right),$$
hence the result.

\end{proof}

\section{Proof of Proposition \ref{pro:tp3}} \label{prooftp3}

\subsection{Splitting into three regimes}

\noindent 
\noindent Recall that, for all $x\in M$,
$$
{\mathcal A}\omega(x) := \left(\iint_{\Gamma(x)} \left\vert  td^{\ast}(t^2\Delta_1)^Ne^{-t^2\Delta_1}\omega(z)\right\vert^2\frac{d\mu(z)}{V(z,t)}\frac{dt}t\right)^{\frac 12}.
$$
The conclusion of Proposition \ref{pro:tp3} means that
\begin{equation} \label{alp}
\left\Vert {\mathcal A}(du)\right\Vert_p\lesssim \left\Vert  du\right\Vert_p.
\end{equation} 
For the proof of \eqref{alp}, following \cite{carron}, we fix a constant $ \kappa\geq 2^{10}$ and, as explained in the introduction, decompose the integration domain in the definition of ${\mathcal A}\omega$ into three pieces or ``regimes'', namely:
\begin{equation}
{\mathcal A}\omega\leq {\mathcal A}_l\omega+{\mathcal A}_s\omega+{\mathcal A}_d\omega,
\end{equation}
where ${\mathcal A}_l\omega$ stands for the ``long-to-short'' regime, that is
\begin{equation} \label{defbl}
{\mathcal A}_l\omega(x):=\left(\iint_{(z,t)\in \Gamma(x)} \left(\int_{r(y)\geq \kappa r(z)} k_t(z,y)\cdot \omega(y) d\mu(y)\right)^2\frac{d\mu(z)}{V(z,t)}\frac{dt}t\right)^{\frac 12},
\end{equation}
${\mathcal A}_s\omega$ stands for the ``short-to-long'' regime, that is
\begin{equation} \label{defbs}
{\mathcal A}_s\omega(x):=\left(\iint_{(z,t)\in \Gamma(x)} \left(\int_{\kappa r(z)>r(y),\ d(z,y)\geq \kappa^{-1}r(z)} k_t(z,y)\cdot \omega(y) d\mu(y)\right)^2\frac{d\mu(z)}{V(z,t)}\frac{dt}t\right)^{\frac 12},
\end{equation}
and ${\mathcal A}_d\omega$ stands for the ``diagonal'' regime, that is
\begin{equation} \label{defbd}
{\mathcal A}_d\omega(x):=\left(\iint_{(z,t)\in \Gamma(x)} \left(\int_{d(z,y)<\kappa^{-1}r(z)} k_t(z,y)\cdot \omega(y) d\mu(y)\right)^2\frac{d\mu(z)}{V(z,t)}\frac{dt}t\right)^{\frac 12}.
\end{equation}
Recall that $k_t$ is the kernel of $td^{\ast}e^{-t^2\Delta_1}$. Notice that, whenever $r(y)\geq \kappa r(z)$, one has
$$
d(z,y)\geq r(y)-r(z)\geq (\kappa-1)r(z)\geq \kappa^{-1}r(z),
$$
which shows that the long-to-short and the short-to-long regimes cover the case where $d(z,y)\geq \kappa^{-1}r(z)$. 
\subsection{The ``long-to-short'' regime} \label{lts}
In this section, we establish that, for all $\lambda>0$,
\begin{equation} \label{Blweak}
\mu\left(\left\{x\in M;\ {\mathcal A}_l\omega(x)>\lambda\right\}\right)\lesssim  \frac{\left\Vert \omega\right\Vert_1}{\lambda}.
\end{equation}
To this purpose, we split ${\mathcal A}_l$ into two parts, whether $t\geq r(y)$ or $t<r(y)$, that is
$$
\begin{array}{lll}
\dsp{\mathcal A}_l\omega(x) & \leq & \dsp \left(\iint_{(z,t)\in \Gamma(x)} \left(\int_{t\geq r(y)\geq \kappa r(z)} \left\vert k_t(z,y)\right\vert\left\vert \omega(y)\right\vert d\mu(y)\right)^2\frac{d\mu(z)}{V(z,t)}\frac{dt}t\right)^{\frac 12}\\
& + & \dsp \left(\iint_{(z,t)\in \Gamma(x)} \left(\int_{r(y)\geq \max(t,\kappa r(z))} \left\vert k_t(z,y)\right\vert\left\vert \omega(y)\right\vert d\mu(y)\right)^2\frac{d\mu(z)}{V(z,t)}\frac{dt}t\right)^{\frac 12}\\
& =: & \dsp {\mathcal A}_{l,1}\omega(x)+{\mathcal A}_{l,2}\omega(x).
\end{array}
$$
\subsubsection{ The case $t\geq r(y)$: } \label{LSlarge_t}

For this part, by Lemma \ref{estimkernel},
$$
\begin{array}{lll}
\dsp {\mathcal A}_{l,1}\omega(x) & = & \dsp \left(\iint_{d(z,x)\leq t} \left(\int_{\kappa r(z)\leq r(y)\leq t} \left\vert k_t(z,y)\right\vert \left\vert \omega(y)\right\vert d\mu(y)\right)^2\frac{d\mu(z)}{V(z,t)}\frac{dt}t\right)^{\frac 12}\\
& \lesssim & \dsp \left(\iint_{d(z,x)\leq t} \left(\int_{\kappa r(z)\leq r(y)\leq t} \frac 1{V(y,t)}\frac{t}{r(y)} \left\vert \omega(y)\right\vert d\mu(y)\right)^2\frac{d\mu(z)}{V(z,t)}\frac{dt}t\right)^{\frac 12}.
\end{array}
$$
We will estimate the latter quantity by a duality argument. Pick up a function $h\in L^2\left(\Gamma(x),\ \frac{d\mu(z)}{V(z,t)}\frac{dt}t\right)$ such that 
\begin{equation} \label{normh}
\int_{\Gamma(x)} \left\vert h(z,t)\right\vert^2\frac{d\mu(z)}{V(z,t)}\frac{dt}t=1.
\end{equation}
Then, by Fubini,
$$
\begin{array}{l}
\dsp \iint_{d(z,x)\leq t} \left\vert h(z,t)\right\vert \left(\int_{\kappa r(z)\leq r(y)\leq t} \frac 1{V(y,t)}\frac{t}{r(y)} \left\vert \omega(y)\right\vert d\mu(y)\right)\frac{d\mu(z)}{V(z,t)}\frac{dt}t   \\
=  \dsp  \int_M  \left\vert \omega(y)\right\vert \left(\iint_{d(z,x)\leq t,\ \kappa r(z)\leq r(y)\leq t} \left\vert h(z,t)\right\vert \frac 1{V(y,t)}\frac{t}{r(y)} \frac{d\mu(z)}{V(z,t)}\frac{dt}t\right)d\mu(y).
\end{array}
$$
The Cauchy-Schwarz inequality and condition \eqref{normh} yield
$$
\begin{array}{l}
\dsp \iint_{d(z,x)\leq t,\ \kappa r(z)\leq r(y)\leq t} \left\vert h(z,t)\right\vert \frac 1{V(y,t)}\frac{t}{r(y)} \frac{d\mu(z)}{V(z,t)}\frac{dt}t  \\
\dsp \leq \left(\iint_{d(z,x)\leq t,\ \kappa r(z)\leq r(y)\leq t} \frac 1{V(y,t)^2}\frac{t^2}{r^2(y)} \frac{d\mu(z)}{V(z,t)}\frac{dt}t\right)^{1/2},
\end{array}
$$
so that
\begin{equation} \label{I1I2}
\begin{array}{l}
\dsp \iint_{d(z,x)\leq t} \left\vert h(z,t)\right\vert \left(\int_{\kappa r(z)\leq r(y)\leq t} \frac 1{V(y,t)}\frac{t}{r(y)} \left\vert \omega(y)\right\vert d\mu(y)\right)\frac{d\mu(z)}{V(z,t)}\frac{dt}t\\
\dsp \leq \int_M  \left\vert \omega(y)\right\vert \left(\left(\iint_{d(z,x)\leq t,\ \kappa r(z)\leq r(y)\leq t}\frac 1{V(y,t)^2}\frac{t^2}{r^2(y)} \frac{d\mu(z)}{V(z,t)}\frac{dt}t\right)^{1/2}\right)d\mu(y)\\
\dsp = \int_{r(y)\geq r(x)}  \left\vert \omega(y)\right\vert \left(\left(\iint_{d(z,x)\leq t,\ \kappa r(z)\leq r(y)\leq t} \frac 1{V(y,t)^2}\frac{t^2}{r^2(y)} \frac{d\mu(z)}{V(z,t)}\frac{dt}t\right)^{1/2}\right)d\mu(y)\\
\dsp +\int_{r(y)<r(x)}  \left\vert \omega(y)\right\vert \left(\left(\iint_{d(z,x)\leq t,\ \kappa r(z)\leq r(y)\leq t} \frac 1{V(y,t)^2}\frac{t^2}{r^2(y)} \frac{d\mu(z)}{V(z,t)}\frac{dt}t\right)^{1/2}\right)d\mu(y)\\
\dsp =: I_1+I_2.
\end{array}
\end{equation}	
Notice that
\begin{equation} \label{vot}
\begin{array}{lll}
\dsp \int_{\kappa r(z)\leq r(y)\leq t} \frac 1{V(z,t)}d\mu(z) & = & \dsp\frac 1{V(o,t)}\int_{\kappa r(z)\leq r(y)\leq t} \frac{V(o,t)}{V(z,t)}d\mu(z)\\
& \leq & \dsp\frac 1{V(o,t)}\int_{\kappa r(z)\leq r(y)\leq t} \frac{V(z,t+r(z))}{V(z,t)}d\mu(z)\\
& \lesssim & \dsp \frac{V(o,r(y))}{V(o,t)}.
\end{array}
\end{equation}
We therefore estimate the innermost integral in $I_1$ as follows:
$$
\begin{array}{l}
\dsp\iint_{d(z,x)\leq t,\ \kappa r(z)\leq r(y)\leq t} \frac 1{V(y,t)^2} \frac{t^2}{r^2(y)} \frac{d\mu(z)}{V(z,t)}\frac{dt}t\\
\dsp = \int_{r(y)}^{+\infty} \frac 1{V(y,t)^2} \frac{t^2}{r^2(y)} \left(\int_{d(z,x)\leq t,\ \kappa r(z)\leq r(y)} \frac{d\mu(z)}{V(z,t)}\right)\frac{dt}t\\
\dsp \lesssim \int_{r(y)}^{+\infty} \frac 1{V(y,t)^2} \frac{t^2}{r^2(y)} \frac{V(o,r(y))}{V(o,t)}\frac{dt}t\\
\dsp  =  \int_{r(y)}^{+\infty}  \frac 1{V(o,t)^2}\frac{V(o,t)^2}{V(y,t)^2}\frac{t^2}{r^2(y)} \frac{V(o,r(y))}{V(o,t)}\frac{dt}t\\
\dsp\lesssim \int_{r(y)}^{+\infty} \frac 1{V(o,t)^2} \frac{t^2}{r^2(y)} \frac{V(o,r(y))}{V(o,t)}\frac{dt}t\\
\dsp = \frac 1{V(o,r(y))^2} \int_{r(y)}^{+\infty} \frac {V(o,r(y))^3}{V(o,t)^3} \frac{t^2}{r^2(y)} \frac{dt}t\\
\dsp\lesssim \frac 1{V(o,r(y))^2} \int_{r(y)}^{+\infty} \left(\frac{r(y)}t\right)^{3\nu-2}\frac{dt}t\lesssim \frac 1{V(o,r(y))^2}.
\end{array}
$$
where the third line holds since $r(y)\le t$ and the fifth line follows from $V(o,t)\leq V(y,t+r(y))\lesssim V(y,t)$ since $r(y)\leq t$. As a consequence,
\begin{equation} \label{estimI1}
I_1\lesssim  \int_{r(y)\geq r(x)}  \frac{\left\vert \omega(y)\right\vert}{V(o,r(y))}d\mu(y) \leq \frac 1{V(o,r(x))} \left\Vert \omega\right\Vert_1.
\end{equation}
For $I_2$, notice first that, when $d(z,x)\leq t$ \Bk and \Bk $\kappa r(z)\leq r(y)\leq t$,
\begin{equation} \label{rxt}
r(x)\leq r(z)+t\leq \frac 1{\kappa}r(y)+t\leq 2t.
\end{equation}
On the other hand, 
\begin{equation} \label{dxy}
d(x,y)\leq d(x,z)+d(z,y)\leq t+r(z)+r(y)\leq 3t.
\end{equation}
Gathering \eqref{rxt} and \eqref{dxy} and using \eqref{vot} again, we obtain
$$
\begin{array}{l}
\dsp \iint_{d(z,x)\leq t,\ \kappa r(z)\leq r(y)\leq t} \frac 1{V(y,t)^2}\frac{t^2}{r^2(y)} \frac{d\mu(z)}{V(z,t)}\frac{dt}t\\
\dsp \leq \int_{\frac{r(x)}2}^{+\infty} \frac 1{V(y,t)^2}\frac{t^2}{r^2(y)} \left(\int_{d(z,x)\leq t,\ \kappa r(z)\leq r(y)} \frac{d\mu(z)}{V(z,t)}\right) \frac{dt}t\\
\dsp \lesssim \int_{\frac{r(x)}2}^{+\infty} \frac 1{V(y,t)^2}\frac{t^2}{r^2(y)}\frac{V(o,r(y))}{V(o,t)}\frac{dt}t\\
\dsp\lesssim \int_{\frac{r(x)}2}^{+\infty} \frac 1{V(x,t)^2}\frac{t^2}{r^2(y)}\frac{V(o,r(y))}{V(o,t)}\frac{dt}t\\
\dsp  \lesssim \frac 1{V(x,r(x))^2} \int_{\frac{r(x)}2}^{+\infty} \frac{t^2}{r^2(y)}\frac{V(o,r(y))}{V(o,t)}\frac{dt}t\\
\dsp\lesssim \frac 1{V(o,r(x))^2}\int_{\frac{r(y)}2}^{+\infty} \left(\frac{r(y)}t\right)^{\nu-2}\frac{dt}t\\
\dsp\lesssim \frac 1{V(o,r(x))^2},
\end{array}
$$
where the fourth line uses \eqref{dxy}. As a consequence,
\begin{equation} \label{estimI2}
I_2\lesssim  \frac 1{V(o,r(x))}\int_{r(y)<r(x)} \left\vert \omega(y)\right\vert d\mu(y) \leq \frac 1{V(o,r(x))} \left\Vert \omega\right\Vert_1.
\end{equation}
Using \eqref{I1I2}, \eqref{estimI1} and \eqref{estimI2}, we conclude that
$$
\begin{array}{l}
\dsp\iint_{d(z,x)\leq t} \left\vert h(z,t)\right\vert \left(\int_{\kappa r(z)\leq r(y)\leq t} \frac 1{V(y,t)}\frac{t}{r(y)} \left\vert \omega(y)\right\vert d\mu(y)\right)\frac{d\mu(z)}{V(z,t)}\frac{dt}t\\
\dsp\lesssim \frac 1{V(o,r(x))} \left\Vert \omega\right\Vert_1,
\end{array}
$$
and finally, taking the supremum over all functions $h\in L^2\left(\Gamma(x),\ \frac{d\mu(z)}{V(z,t)}\frac{dt}t\right)$ satisfying \eqref{normh},
$$
{\mathcal A}_{l,1}\omega(x)\lesssim \frac 1{V(o,r(x))} \left\Vert \omega\right\Vert_1.
$$
Thus, if $\lambda>0$ and ${\mathcal A}_{l,1}\omega(x)>\lambda$, then $V(o,r(x))\leq \frac{\left\Vert \omega\right\Vert_1}{\lambda}$. Lemma \ref{weak} in the Appendix therefore yields 
\begin{equation} \label{Bl1weak}
\mu\left(\left\{x\in M;\ {\mathcal A}_{l,1}\omega(x)>\lambda\right\}\right)\lesssim  \frac{\left\Vert \omega\right\Vert_1}{\lambda}.
\end{equation}

\subsubsection{The case $t\leq r(y)$: } \label{tlry}

Here,
\begin{equation} \label{AGlongshortbis}
\begin{array}{lll}
\dsp {\mathcal A}_{l,2}\omega(x) & = & \dsp \left(\iint_{d(z,x)\leq t} \left(\int_{\max(\kappa r(z),t)\leq r(y)} \left\vert k_t(z,y)\right\vert \left\vert \omega(y)\right\vert d\mu(y)\right)^2\frac{d\mu(z)}{V(z,t)}\frac{dt}t\right)^{\frac 12}\\
& \lesssim &\dsp \left(\iint_{d(z,x)\leq t} \left(\int_{\max(\kappa r(z),t)\leq r(y)} \frac 1{V(y,t)}e^{-c\frac{r^2(y)}{t^2}} \left\vert \omega(y)\right\vert d\mu(y)\right)^2\frac{d\mu(z)}{V(z,t)}\frac{dt}t\right)^{\frac 12},
\end{array}
\end{equation}
where the last line holds since
$$
r(y)\le r(z)+d(z,y)\le \frac 1{\kappa}r(y)+d(z,y)
$$
so that $r(y)\lesssim d(z,y)$. As in the previous case, we therefore have to estimate
$$
\begin{array}{l}
\dsp\int_{r(y)\geq r(x)}  \left\vert \omega(y)\right\vert \left(\left(\iint_{d(z,x)\leq t,\ \max(\kappa r(z),t)\leq r(y)}\frac 1{V(y,t)^2} e^{-c\frac{r^2(y)}{t^2}} \frac{d\mu(z)}{V(z,t)}\frac{dt}t\right)^{1/2}\right)d\mu(y)\\
\dsp+\int_{r(y)<r(x)} \left\vert \omega(y)\right\vert \left(\left(\iint_{d(z,x)\leq t,\ \max(\kappa r(z),t)\leq r(y)} \frac 1{V(y,t)^2} e^{-c\frac{r^2(y)}{t^2}} \frac{d\mu(z)}{V(z,t)}\frac{dt}t\right)^{1/2}\right)d\mu(y) \\
\dsp =:I_1+I_2.
\end{array}
$$	
Since $V(o,t)\leq V(z,t+r(z))\leq V(z,t+r(y))\lesssim V(z,t)\left(1+\frac{r(y)}t\right)^D$ whenever $\kappa r(z)\leq r(y)$, we estimate the innermost integral in $I_1$ by
$$
\begin{array}{l}
\dsp\iint_{d(z,x)\leq t,\ \max(\kappa r(z),t)\leq r(y)}\frac 1{V(y,t)^2} e^{-c\frac{r^2(y)}{t^2}} \frac{d\mu(z)}{V(z,t)}\frac{dt}t\\
\dsp = \int_0^{r(y)} \frac 1{V(y,t)^2} e^{-c\frac{r^2(y)}{t^2}} \left(\int_{\kappa r(z)\leq r(y)} \frac{d\mu(z)}{V(z,t)}\right) \frac{dt}t\\
\dsp \lesssim \int_0^{r(y)} \frac {V(o,r(y))}{V(o,t)}\frac 1{V(y,t)^2}\left(1+\frac{r(y)}t\right)^De^{-c\frac{r^2(y)}{t^2}}\frac{dt}t\\
\dsp \le \int_0^{r(y)} \left(\frac{r(y)}t\right)^D\frac 1{V(y,r(y))^2}\frac{V(y,r(y))^2}{V(y,t)^2}\left(1+\frac{r(y)}t\right)^{D} e^{-c\frac{r^2(y)}{t^2}}\frac{dt}t\\
\dsp\lesssim \frac 1{V(o,r(y))^2}.
\end{array}
$$
It follows that
\begin{equation} \label{estimI1bis}
I_1\lesssim \int_{r(y)\geq r(x)}  \frac{\left\vert \omega(y)\right\vert}{V(o,r(y))}d\mu(y)
\le \frac{\left\Vert \omega\right\Vert_1}{V(o,r(x))}.
\end{equation}
For the innermost integral in $I_2$, since $\kappa r(z)\leq r(y)\leq r(x)$,  
$$
r(x)\leq r(z)+t\leq \frac 1{\kappa}r(y)+t\leq \frac 1{\kappa}r(x)+t,
$$
one has $r(x)\lesssim t\leq r(y)\leq r(x)$, which entails
$$
\begin{array}{l}
\dsp\iint_{d(z,x)\leq t,\ \max(\kappa r(z),t)\leq r(y)}\frac 1{V(y,t)^2} e^{-c\frac{r^2(y)}{t^2}} \frac{d\mu(z)}{V(z,t)}\frac{dt}t\\
\dsp = \int_{cr(x)}^{r(y)} \frac 1{V(y,t)^2}  e^{-c\frac{r^2(y)}{t^2}} \left(\int_{B(x,t)} \frac{d\mu(z)}{V(z,t)}\right)\frac{dt}t\\
\dsp\lesssim \int_{cr(x)}^{r(y)}  \frac 1{V(y,t)^2}  e^{-c\frac{r^2(y)}{t^2}} \frac{dt}t\\
\dsp\leq  \frac 1{V(y,r(y))^2} \int_{cr(x)}^{r(y)} \left(\frac{r(y)}t\right)^{2D}e^{-c\frac{r^2(y)}{t^2}} \frac{dt}t\\
\dsp\lesssim  \frac 1{V(y,r(y))^2}\lesssim \frac 1{V(x,r(x))^2}, 
\end{array}
$$
therefore
\begin{equation} \label{estimI2bis}
I_2\lesssim \frac 1{V(x,r(x))}\left\Vert \omega\right\Vert_1\lesssim \frac 1{V(o,r(x))}\left\Vert \omega\right\Vert_1.
\end{equation}
Gathering \eqref{AGlongshortbis}, \eqref{estimI1bis} and \eqref{estimI2bis}, we obtain
$$
{\mathcal A}_{l,2}\omega(x)\lesssim \frac 1{V(o,r(x))}\left\Vert \omega\right\Vert_1,
$$
and, using Lemma \ref{weak} again, we conclude that 
\begin{equation} \label{Bl2weak}
\mu\left(\left\{x\in M;\ {\mathcal A}_{l,2}\omega(x)>\lambda\right\}\right)\lesssim  \frac{\left\Vert \omega\right\Vert_1}{\lambda}.
\end{equation}
The conjunction of \eqref{Bl1weak} and \eqref{Bl2weak} yields \eqref{Blweak}.

\subsection{The ``short-to-long'' regime}

This section is devoted to the analysis of ${\mathcal A}_s\omega$. Again, we split this term into two parts: we bound ${\mathcal A}_s\omega(x)$ by the sum ${\mathcal A}_{s,1}\omega(x)+{\mathcal A}_{s,2}\omega(x)$, where

$${\mathcal A}_{s,1}\omega(x)=\left(\iint_{(z,t)\in \Gamma(x)} \left(\int_{\kappa r(z)>r(y),\ d(z,y)\geq \kappa^{-1}r(z),\ t\geq r(y)} \left\vert k_t(z,y)\right\vert\left\vert \omega(y)\right\vert d\mu(y)\right)^2\frac{d\mu(z)}{V(z,t)}\frac{dt}t\right)^{\frac 12}$$
and

$${\mathcal A}_{s,2}\omega(x)=\left(\iint_{(z,t)\in \Gamma(x)} \left(\int_{\kappa r(z)>r(y),\ d(z,y)\geq \kappa^{-1}r(z),\ t<r(y)} \left\vert k_t(z,y)\right\vert\left\vert \omega(y)\right\vert d\mu(y)\right)^2\frac{d\mu(z)}{V(z,t)}\frac{dt}t\right)^{\frac 12}$$
In this regime, we will assume and use the fact that $p>\frac{\nu}{\nu-1}$. We then intend to prove that, for all $\lambda>0$,
\begin{equation} \label{Bsweak1}
\mu\left(\left\{x\in M;\ {\mathcal A}_{s,1}\omega(x)>\lambda\right\}\right)\lesssim  \frac{\left\Vert \omega\right\Vert_p^p}{\lambda^p}
\end{equation}
and
\begin{equation} \label{Bsweak2}
\mu\left(\left\{x\in M;\ {\mathcal A}_{s,2}\omega(x)>\lambda\right\}\right)\lesssim  \frac{\left\Vert \omega\right\Vert_1}{\lambda}.
\end{equation}
Note that, in this short-to-long regime, since $d(y,z)\geq \kappa^{-1}r(z)$,
\begin{equation} \label{estimksl}
\left\vert k_t(z,y)\right\vert\lesssim \frac 1{V(z,t)}\left(1+\frac{t}{r(y)}\right)e^{-c\frac{r^2(z)}{t^2}}.
\end{equation}
Indeed, using $r(z)\lesssim d(y,z)$ and doubling, one has
$$
\begin{array}{rcl}
\dsp \left\vert k_t(z,y)\right\vert & \lesssim & \dsp \frac 1{V(y,t)}\left(1+\frac t{r(y)}\right)e^{-c\frac{d^2(y,z)}{t^2}}\\
& \lesssim & \dsp \frac 1{V(z,t)}\frac{V(y,t+d(y,z))}{V(y,t)}\left(1+\frac t{r(y)}\right)e^{-c\frac{d^2(y,z)}{t^2}}\\
& \lesssim & \dsp \frac 1{V(z,t)} \left(1+\frac t{r(y)}\right)e^{-c\frac{r^2(z)}{t^2}}.
\end{array}
$$

\subsubsection{ The case $t\geq r(y)$}

As in the corresponding case of Section \ref{lts}, using \eqref{estimksl}, one has
$$
\begin{array}{lll}
\dsp {\mathcal A}_{s,1}\omega(x)   \le   \dsp \left(\iint_{d(z,x)\leq t} \left(\int_{\kappa r(z)>r(y),\ d(z,y)\geq \kappa^{-1}r(z),\ r(y)\leq t} \left\vert k_t(z,y)\right\vert \left\vert \omega(y)\right\vert d\mu(y)\right)^2\frac{d\mu(z)}{V(z,t)}\frac{dt}t\right)^{\frac 12}\\
\lesssim \dsp\left(\iint_{d(z,x)\leq t} \left(\int_{\kappa r(z)>r(y),\ d(z,y)\geq \kappa^{-1}r(z),\ r(y)\leq t} \frac 1{V(z,t)}\frac{t}{r(y)}e^{-c\frac{r^2(z)}{t^2}} \left\vert \omega(y)\right\vert d\mu(y)\right)^2\frac{d\mu(z)}{V(z,t)}\frac{dt}t\right)^{\frac 12}\\
\le    \dsp\left(\iint_{d(z,x)\leq t}  \frac 1{V(z,t)^2}  e^{-c\frac{r^2(z)}{t^2}}\left(\int_{\kappa r(z)>r(y),\ d(z,y)\geq \kappa^{-1}r(z),\ r(y)\leq t} \frac{t}{r(y)}\left\vert \omega(y)\right\vert d\mu(y)\right)^2\frac{d\mu(z)}{V(z,t)}\frac{dt}t\right)^{\frac 12}
\end{array}
$$
By the H\"older inequality and Lemma \ref{integration} from the Appendix, since $p^{\prime}<\nu$,
$$
\begin{array}{l}
\dsp\int_{\kappa r(z)>r(y),\ d(z,y)\geq \kappa^{-1}r(z),\ r(y)\leq t} \frac{t}{r(y)}\left\vert \omega(y)\right\vert d\mu(y)\leq t\left\Vert \omega\right\Vert_p \left(\int_{r(y)\leq \min\left(\kappa r(z),t\right)} \frac 1{r(y)^{p^{\prime}}}d\mu(y)\right)^{\frac 1{p^{\prime}}}\\
\dsp\lesssim t\left\Vert \omega\right\Vert_p \min\left(\kappa r(z),t\right)^{-1} V\left(o,\min\left(\kappa r(z),t\right)\right)^{\frac 1{p^{\prime}}}.
\end{array}
$$
As a consequence,
\begin{equation} \label{Bs1}
\begin{array}{lll}
\dsp{\mathcal A}_{s,1}\omega(x) \lesssim  \dsp\left\Vert \omega\right\Vert_p \left(\iint_{d(z,x)\leq t} \frac {t^2}{V(z,t)^3} e^{-c\frac{r^2(z)}{t^2}}\left(\min\left(\kappa r(z),t\right)\right)^{-2}V\left(o,\min\left(\kappa r(z),t\right)\right)^{\frac 2{p^{\prime}}}d\mu(z)\frac{dt}t\right)^{\frac 12}\\
 \le  \dsp\left\Vert \omega\right\Vert_p \left(\iint_{d(z,x)\leq t,\ t\leq \frac{r(x)}2} \frac {t^2}{V(z,t)^3} e^{-c\frac{r^2(z)}{t^2}}\left(\min\left(\kappa r(z),t\right)\right)^{-2}V\left(o,\min\left(\kappa r(z),t\right)\right)^{\frac 2{p^{\prime}}}d\mu(z)\frac{dt}t\right)^{\frac 12}\\
 +   \dsp\left\Vert \omega\right\Vert_p \left(\iint_{d(z,x)\leq t,\ t>\frac{r(x)}2} \frac {t^2}{V(z,t)^3} e^{-c\frac{r^2(z)}{t^2}}\left(\min\left(\kappa r(z),t\right)\right)^{-2}V\left(o,\min\left(\kappa r(z),t\right)\right)^{\frac 2{p^{\prime}}}d\mu(z)\frac{dt}t\right)^{\frac 12}\\
=:  I_1+I_2.
\end{array}
\end{equation}
When $d(z,x)\leq t$ and $t\leq \frac{r(x)}2$, then $r(x)\leq r(z)+t\leq r(z)+\frac{r(x)}2$, and $r(z)\leq r(x)+t\leq \frac{3r(x)}2$, so that $r(x)\simeq r(z)$ and $\min\left(\kappa r(z),t\right)\simeq t$, which shows that
\begin{equation} \label{estimI1sl1}
\begin{array}{lll}
\dsp I_1& \lesssim & \dsp\left\Vert \omega\right\Vert_p \left(\iint_{d(z,x)\leq t,\ t\leq \frac{r(x)}2} \frac 1{V(z,t)^3}  e^{-c\frac{r^2(z)}{t^2}}V(o,t)^{\frac 2{p^{\prime}}}d\mu(z)\frac{dt}t\right)^{\frac 12}\\
& \lesssim & \dsp\left\Vert \omega\right\Vert_p \left(\int_0^{\frac{r(x)}2} \frac 1{V(x,t)^2}  e^{-c\frac{r^2(x)}{t^2}} V(o,t)^{\frac 2{p^{\prime}}}\frac{dt}t\right)^{\frac 12}\\
&\lesssim & \dsp\frac{\left\Vert \omega\right\Vert_p}{V(x,r(x))}V(o,r(x))^{\frac 1{p^{\prime}}} \left(\int_0^{\frac{r(x)}2} \left(\frac{r(x)}t\right)^D e^{-c\frac{r^2(x)}{t^2}} \frac{dt}t\right)^{\frac 12}\\
& \lesssim & \dsp\frac{\left\Vert \omega\right\Vert_p}{V(o,r(x))^{\frac 1p}}. 
\end{array}
\end{equation}
When $d(z,x)\leq t$ and $t>\frac{r(x)}2$, then $r(z)\leq r(x)+t\leq 3t$, so that $\min\left(\kappa r(z),t\right)\simeq r(z)$. Therefore,
\begin{equation} \label{estimI2sl1}
\begin{array}{lll}
\dsp I_2& \lesssim &\dsp \left\Vert \omega\right\Vert_p \left(\iint_{d(z,x)\leq t,\ t>\frac{r(x)}2} \frac {t^2}{V(z,t)^3}  e^{-c\frac{r^2(z)}{t^2}}r(z)^{-2}V(o,r(z))^{\frac 2{p^{\prime}}}d\mu(z)\frac{dt}t\right)^{\frac 12}\\
& \lesssim & \dsp\left\Vert \omega\right\Vert_p\left(\int_{\frac{r(x)}2}^{+\infty} \frac 1{V(x,t)^3} \left(\int_{d(z,x)\le t,\ r(z)\le 3t} \frac{t^2}{r(z)^2}e^{-c\frac{r^2(z)}{t^2}} V(o,r(z))^{\frac 2{p^{\prime}}}d\mu(z)\right)\frac{dt}t\right)^{\frac 12}\\
& \lesssim & \dsp\left\Vert \omega\right\Vert_p\left(\int_{\frac{r(x)}2}^{+\infty} \frac 1{V(x,t)^3} \left(\int_{d(z,x)\le t,\ r(z)\leq 3t} \frac{t^2}{r(z)^2}\left(\frac{r(z)}t\right)^{\frac {2\nu}{p^{\prime}}}e^{-c\frac{r^2(z)}{t^2}}d\mu(z)\right)V(o,t)^{\frac 2{p^{\prime}}}\frac{dt}t\right)^{\frac 12}\\
& \lesssim & \dsp\left\Vert \omega\right\Vert_p \left(\int_{\frac{r(x)}2}^{+\infty}  \frac 1{V(x,t)^2} V(o,t)^{\frac 2{p^{\prime}}}\frac{dt}t\right)^{\frac 12}\\
& \lesssim & \dsp\left\Vert \omega\right\Vert_p \left(\int_{\frac{r(x)}2}^{+\infty} V(x,t)^{-\frac 2p}\frac{dt}t\right)^{\frac 12}\\
& \lesssim & \dsp\left\Vert \omega\right\Vert_p V(x,r(x))^{-\frac 1p}  \left(\int_{\frac{r(x)}2}^{+\infty} \left(\frac{r(x)}t\right)^{2\frac{\nu}p}\frac{dt}t\right)^{\frac 12}\\
& \lesssim & \dsp\left\Vert \omega\right\Vert_p V(x,r(x))^{-\frac 1p},
\end{array}
\end{equation}
where the third line follows from the second one since $\frac{2\nu}{p^{\prime}}>2$ (this, in turn, is due to the fact that $p>\frac{\nu}{\nu-1}$). Thus, \eqref{Bs1}, \eqref{estimI1sl1} and \eqref{estimI2sl1} yield
$$
{\mathcal A}_{s,1}\omega(x)\lesssim \frac{\left\Vert \omega\right\Vert_p}{V(o,r(x))^{\frac 1p}}.
$$
Lemma \ref{weak} therefore ensures that \eqref{Bsweak1} holds.

\subsubsection{The case $t\leq r(y)$:}

In this case, following the argument in Section \ref{tlry} and using \eqref{estimksl} again, one obtains
\begin{equation} \label{estimBG2}
\begin{array}{lll}
\dsp{\mathcal A}_{s,2}\omega(x)   \le \dsp \left(\iint_{d(z,x)\leq t} \left(\int_{\kappa r(z)>r(y),\ d(z,y)\geq \kappa^{-1}r(z),\ t\leq r(y)} \left\vert k_t(z,y)\right\vert \left\vert \omega(y)\right\vert d\mu(y)\right)^2\frac{d\mu(z)}{V(z,t)}\frac{dt}t\right)^{\frac 12}\\
\lesssim  \dsp\left(\iint_{d(z,x)\leq t} \left(\int_{\kappa r(z)>r(y),\ d(z,y)\geq \kappa^{-1}r(z),\ t\leq r(y)}  \frac 1{V(z,t)}e^{-c\frac{r^2(z)}{t^2}}\left\vert \omega(y)\right\vert d\mu(y)\right)^2\frac{d\mu(z)}{V(z,t)}\frac{dt}t\right)^{\frac 12}\\
  \lesssim   \dsp\int_M \left\vert \omega(y)\right\vert\left(\int_0^{r(y)} \frac 1{V(z,t)^3} \left(\int_{d(z,x)\leq t,\ \kappa r(z)>r(y),\ d(z,y)\geq \kappa^{-1}r(z)} e^{-c\frac{r^2(z)}{t^2}}d\mu(z)\right)\frac{dt}t\right)^{1/2}d\mu(y)\\
 \lesssim  \dsp\int_M \left\vert \omega(y)\right\vert\left(\int_0^{r(y)} \frac 1{V(x,t)^3} \left(\int_{d(z,x)\leq t,\ \kappa r(z)>r(y),\ d(z,y)\geq \kappa^{-1}r(z)} e^{-c\frac{r^2(z)}{t^2}}d\mu(z)\right)\frac{dt}t\right)^{1/2}d\mu(y)\\
 =:  \dsp\int_M \left\vert \omega(y)\right\vert I(y)d\mu(y).
\end{array}
\end{equation}
When $\frac 12r(x)\leq r(y)$, then, using the doubling property, we simply estimate
\begin{equation} \label{estimIy1}
\begin{array}{lll}
\dsp I(y)& \lesssim & \dsp\left(\int_0^{r(y)} \frac 1{V(x,t)^2} e^{-c\frac{r^2(y)}{t^2}}\frac{dt}t\right)^{1/2}\\
& \lesssim & \dsp \frac 1{V(x,r(y))} \lesssim \frac 1{V(x,r(x))}\lesssim \frac 1{V(o,r(x))}.
\end{array}
\end{equation}
If $r(y)<\frac 12 r(x)$, then 
$$
r(x)\leq r(z)+d(x,z)\leq r(z)+t\leq r(z)+r(y)<r(z)+\frac 12 r(x),
$$
so that $r(x)\leq 2r(z)$. As a consequence,
\begin{equation} \label{estimIy2}
\begin{array}{lll}
\dsp I(y)& \lesssim & \dsp\left(\int_0^{r(y)} \frac 1{V(x,t)^2}  e^{-c\frac{r^2(x)}{t^2}}\frac{dt}t\right)^{1/2}\\
&\leq & \dsp\left(\int_0^{r(x)} \frac 1{V(x,t)^2}  e^{-c\frac{r^2(x)}{t^2}}\frac{dt}t\right)^{1/2}\\
&\lesssim &\dsp \frac 1{V(x,r(x))} \lesssim \frac 1{V(o,r(x))}.
\end{array}
\end{equation}
Gathering \eqref{estimBG2}, \eqref{estimIy1} and \eqref{estimIy2}, we obtain
$$
{\mathcal A}_{s,2}\omega(x)\lesssim \frac 1{V(o,r(x))}\left\Vert \omega\right\Vert_1,
$$
and we conclude as before that \eqref{Bsweak2} holds.

\subsection{The ``diagonal'' regime}

We now turn to the range $d(z,y)<\kappa^{-1}r(z)$.  As in \cite[Sections 4.3 and 5.3]{carron}, we will use the covering $(B_{\alpha})_{\alpha\in \N}$ of $M$ by admissible balls introduced in Section \ref{goodcover}, as well as the associated partition of unity. Let $\alpha\in \N$ and $y\in B_{\alpha}$. If $d(z,y)\leq \kappa^{-1}r(z)$, then, by \eqref{ralpha}, 
$$
\begin{array}{lll}
\dsp d(z,x_{\alpha})& \leq & \dsp d(z,y)+d(y,x_{\alpha})\\
& \leq & \dsp \kappa^{-1}r(z)+r_{\alpha}\\
& \leq & \dsp \kappa^{-1}d(z,x_{\alpha})+\kappa^{-1}r(x_{\alpha})+r_{\alpha}\\
& \leq & \dsp \kappa^{-1}d(z,x_{\alpha})+ (1+2^{10}\kappa^{-1}) r_{\alpha},
\end{array}
$$
and a short computation shows that since $\kappa\geq 2^{10}$ by assumption, one has $d(z,x_\alpha)\leq 4r_\alpha$, that is $z\in 4B_{\alpha}$. Therefore, 
$$
\begin{array}{lll}
\dsp {\mathcal A}_d\omega(x)  & \leq  & \dsp\sum_{\alpha} \left(\iint_{(z,t)\in \Gamma(x),\ z\in 4B_{\alpha}} \left\vert \int_{d(z,y)< \kappa^{-1}r(z)} k_t(z,y)\cdot (\chi_{\alpha}\omega)(y) d\mu(y)\right\vert^2 \frac{d\mu(z)}{V(z,t)}\frac{dt}t\right)^{\frac 12}\\
& =: & \dsp\sum_{\alpha} {\mathcal A}_{d,\alpha}(\omega)(x).
\end{array}
$$
Fix $\alpha\in \N$ and split
$$
\begin{array}{lll}
\dsp{\mathcal A}_{d,\alpha}(\omega)(x) \leq  \dsp \left(\iint_{(z,t)\in \Gamma(x),\ z\in 4B_{\alpha},\ t\geq r_{\alpha}} \left\vert\int_{d(z,y)<\kappa^{-1}r(z)} k_t(z,y)\cdot (\chi_{\alpha}\omega)(y)d\mu(y)\right\vert^2 \frac{d\mu(z)}{V(z,t)}\frac{dt}t\right)^{\frac 12}\\
\quad +  \dsp \left(\iint_{(z,t)\in \Gamma(x),\ z\in 4B_{\alpha},\ t<r_{\alpha}} \left(\int_{d(z,y)<\kappa^{-1}r(z)} k_t(z,y)\cdot (\chi_{\alpha}\omega)(y) d\mu(y)\right)^2\frac{d\mu(z)}{V(z,t)}\frac{dt}t\right)^{\frac 12}\\
 =:  {\mathcal A}_{d,\alpha,1}\omega(x)+{\mathcal A}_{d,\alpha,2}\omega(x).
\end{array}
$$

\subsubsection{The case $t\geq r_{\alpha}$: } 

We intend to prove that
\begin{equation} \label{bdweak1}
\mu\left(\left\{x\in M;\ \sum_{\alpha}{\mathcal A}_{d,\alpha,1}\omega(x)>\lambda\right\}\right)\lesssim \frac{\left\Vert \omega\right\Vert_1}{\lambda}.
\end{equation}
We use the upper bound
$$
\left\vert k_t(z,y)\right\vert\lesssim \frac 1{V(y,t)}\frac{t}{r(y)+1},
$$
which follows from Lemma \ref{estimkernel}. Indeed, note that, for all $\alpha$ and all $y\in B_{\alpha}$, $r(y)\lesssim r_{\alpha}$. As a consequence, $r(y)\lesssim t$ in the range under consideration. \par

As in section \ref{LSlarge_t}, one has
$$
{\mathcal A}_{d,\alpha,1}(\omega)(x)\leq \int_{y\in B_{\alpha}} \left\vert \omega(y)\right\vert I(y)d\mu(y),
$$
where 
$$
I(y)\lesssim \left(\int_{d(z,x)\leq t,\ t\geq r_{\alpha},\ z\in 4B_{\alpha}} \frac 1{V(y,t)^3}\left(\frac{t^2}{r_{\alpha}^2}\right)d\mu(z)\frac{dt}t\right)^{1/2}.
$$
Noticing that, for all $y\in B_{\alpha}$, $4B_{\alpha}\subset B(y,5r_{\alpha})$, we write
$$
\begin{array}{lll}
I(y)& \lesssim & \dsp\left(\int_{t\geq r_{\alpha}} \frac {V(y,5r_{\alpha})}{V(y,t)^3}\left(\frac{t^2}{r_{\alpha}^2}\right)\frac{dt}t\right)^{1/2}\\
& \lesssim & \dsp \frac 1{V(y,r_{\alpha})} \left(\int_{r_{\alpha}}^{+\infty} \frac {V(y,r_{\alpha})^3}{V(y,t)^3}\left(\frac{t^2}{r_{\alpha}^2}\right)\frac{dt}t\right)^{1/2}\\
& \lesssim & \dsp \frac 1{V(y,r_{\alpha})} \left(\int_{r_{\alpha}}^{+\infty} \left(\frac {r_{\alpha}}t\right)^{3\nu-2}\frac{dt}t\right)^{1/2}\\
& \lesssim & \dsp \frac 1{V(y,r_{\alpha})}.
\end{array}
$$
As a consequence,
$$
\begin{array}{lll}
\dsp {\mathcal A}_{d,\alpha,1}(\omega)(x) & \lesssim &\dsp \int_{y\in B_{\alpha}} \frac{\left\vert \omega(y)\right\vert}{V(y,r_{\alpha})}d\mu(y)\\
& \lesssim & \dsp \frac 1{V(B_{\alpha})}\left\Vert \omega\right\Vert_{L^1(B_\alpha)},
\end{array}
$$
so that
\begin{equation} \label{bdalpha1case1}
{\mathcal A}_{d,\alpha,1}(\omega)(x) \lesssim \frac 1{V(o,r(x)))}\left\Vert \omega\right\Vert_{L^1(B_\alpha)},
\end{equation}
whenever $r(x)\leq 2(4+2^{10})r_{\alpha}$.\par

Consider now the case where $r(x)>2(4+2^{10})r_{\alpha}$. Then, for all $z\in 4B_{\alpha}$, $r(z)\leq 4r_{\alpha}+r(x_{\alpha})\leq (4+2^{10})r_{\alpha}<\frac {r(x)}2$, so that $d(x,z)\geq \frac{r(x)}2$. As a consequence,
$$
\begin{array}{lll}
I(y) & \lesssim & \dsp \left(\int_{t\geq\frac 12r(x)} \frac {V(y,5r_{\alpha})}{V(y,t)^3}\left(\frac{t^2}{r_{\alpha}^2}\right)\frac{dt}t\right)^{1/2}\\
& \lesssim & \dsp \left(\frac{V(y,r_{\alpha})}{V(y,r(x))}\right)^{1/2}\left(\int_{t\geq\frac 12r(x)} \frac {V(y,r(x))}{V(y,t)^3}\left(\frac{t^2}{r_{\alpha}^2}\right)\frac{dt}t\right)^{1/2}\\
& \leq & \dsp \left(\frac{V(y,r_{\alpha})}{V(y,r(x))}\right)^{1/2}\frac{r(x)}{r_{\alpha}} \frac 1{V(y,r(x))} \left(\int_{t\geq\frac 12r(x)} \frac {V(y,r(x))^3}{V(y,t)^3}\left(\frac{t^2}{r(x)^2}\right)\frac{dt}t\right)^{1/2}\\
& \lesssim & \dsp \left(\frac{r_{\alpha}}{r(x)}\right)^{\frac{\nu}2-1} \frac 1{V(y,r(x))} \left(\int_{t\geq\frac 12r(x)} \left(\frac{r(x)}t\right)^{3\nu-2}\frac{dt}t\right)^{1/2}\\
& \lesssim & \dsp \frac 1{V(y,r(x))}\lesssim \frac 1{V(x,r(x))},
\end{array}
$$
where the last inequality follows from the fact that $d(x,y)\leq r(x)+r(y)\lesssim r(x)+r_{\alpha}\lesssim r(x)$, which entails $V(x,r(x))\lesssim V(y,r(x))$. As a consequence,
\begin{equation} \label{bdalpha1case2}
{\mathcal A}_{d,\alpha,1}(\omega)(x)  \lesssim  \frac 1{V(o,r(x))}\left\Vert \omega\right\Vert_{L^1(B_\alpha)}.
\end{equation}
Gathering \eqref{bdalpha1case1} and \eqref{bdalpha1case2}, and using the fact that the balls $(B_\alpha)_\alpha$ have the finite intersection property, we obtain

$$\sum_\alpha \mathcal{A}_{d,\alpha,1}(\omega)(x)\lesssim \frac{1}{V(o,r(x))}\Vert \omega\Vert_1.$$ 
By Lemma \ref{weak} from Appendix A, we obtain, for all $\lambda>0$,
$$
\mu\left(\left\{x;\ \sum_\alpha {\mathcal A}_{d,\alpha,1}(\omega)(x)>\lambda\right\}\right)\lesssim \frac 1{\lambda} \left\Vert \omega\right\Vert_1.
$$

\subsubsection{The case $t<r_{\alpha}$: } 

We now turn to the case of ${\mathcal A}_{d,\alpha,2}\omega$; we wish to prove that

\begin{equation}\label{eq:A2a}
\mu\left(\left\{x;\ \sum_\alpha {\mathcal A}_{d,\alpha,2}(\omega)(x)>\lambda\right\}\right)\lesssim \frac 1{\lambda} \left\Vert \omega\right\Vert_1.
\end{equation}
Decompose
$$
\begin{array}{l}
\dsp\omega=df=\sum_{\alpha\in A} \chi_{\alpha}df\\
\dsp=\sum_{\alpha\in A} \chi_{\alpha}d(f-f_{B_{\alpha}})\\
\dsp= \sum_{\alpha\in A} d(\chi_{\alpha}(f-f_{B_{\alpha}}))-\sum_{\alpha\in A} (f-f_{B_{\alpha}})d\chi_{\alpha}\\
\dsp=:\sum_{\alpha\in A} df_{\alpha}-\sum_{\alpha\in A}\eta_{\alpha}.
\end{array}
$$
Note that $f_{\alpha}$ and $\eta_{\alpha}$ are supported in $B_{\alpha}$. Moreover, we claim:
\begin{equation} \label{L1alpha}
\left\Vert d f_{\alpha}\right\Vert_{L^1}+\left\Vert \eta_{\alpha}\right\Vert_{L^1}+r_{\alpha}\left\Vert d^{\ast}\eta_{\alpha}\right\Vert_{L^1}\lesssim \left\Vert \omega\right\Vert_{L^1(B_{\alpha})}.
\end{equation}
Indeed, by Lemma \ref{poincadm},
$$
\left\Vert \eta_{\alpha}\right\Vert_{L^1(B_{\alpha})}\lesssim \frac 1{r_{\alpha}} \left\Vert f-f_{B_{\alpha}}\right\Vert_{L^1(B_{\alpha})}\lesssim \left\Vert df\right\Vert_{L^1(B_{\alpha})}=||\omega||_{L^1(B_\alpha)}.
$$
Thanks to the fact that $\chi_\alpha \omega=df_\alpha-\eta_\alpha$ this also entails that $\left\Vert df_{\alpha}\right\Vert_{L^1}\lesssim \left\Vert \omega\right\Vert_{L^1(B_{\alpha})}$. Finally,
$$
\begin{array}{lll}
\dsp d^{\ast}\eta_{\alpha} & = & \dsp d^{\ast}\left[\left(f-f_{B_{\alpha}}\right)d\chi_{\alpha}\right]\\
& = & \dsp \langle df,d\chi_{\alpha}\rangle+\left(f-f_{B_{\alpha}}\right)\Delta\chi_{\alpha},
\end{array}
$$
which, in view of \eqref{conditionchi} and Lemma \ref{poincadm}, entails that $r_{\alpha}\left\Vert d^{\ast}\eta_{\alpha}\right\Vert_{L^1}\lesssim \left\Vert \omega\right\Vert_{L^1(B_{\alpha})}$, completing the proof of \eqref{L1alpha}. \par

Let $T_\alpha$ be the operator defined by

$$ T_\alpha \omega(x)=\left(\int_{d(x,z)\leq t,\, t<r_\alpha}\chi_{4B_\alpha}(z)|td^*(t^2\Delta_1)^Ne^{-t^2\Delta_1}\omega(z)|^2\,\frac{d\mu(z)}{V(z,t)}\frac{dt}{t}\right)^{1/2}.$$
Note that if $d(x,z)\leq t$, $z\in 4B_\alpha$ and $t\leq r_\alpha$, then $x\in 5B_\alpha$; consequently, the support of $T_\alpha\omega $ is included in $5B_\alpha$, which is a remote ball for $\alpha\ne 0$ by \eqref{ralpha}. 

Clearly, one has

$$\Vert T_\alpha\Vert_{2\to 2}\leq \Vert \mathcal{A} \Vert_{2\to 2}\leq C,$$
where $C$ is independent of $\alpha$. Also, by \eqref{L1alpha}, 

\begin{equation}\label{eq:Ta}
\mathcal{A}_{2,d,\alpha}(df)(x)\leq T_\alpha(df_\alpha)(x)+T_\alpha(\eta_\alpha)(x).
\end{equation}
Since the support of $T_\alpha$ is included in $5B_\alpha$, the covering $(5B_\alpha)_{\alpha\ge 0}$ has the finite intersection property and in view of \eqref{eq:Ta}, in order to show \eqref{eq:A2a}, it is enough to prove the following pair of inequalities:
\begin{equation} \label{talpha1}
\mu\left(\left\{x\in 5B_\alpha;\ \left\vert T_{\alpha}(df_{\alpha})(x)\right\vert>\lambda\right\}\right)\lesssim \frac 1{\lambda}\left\Vert df_{\alpha}\right\Vert_{L^1}
\end{equation}
and
\begin{equation} \label{talpha2}
\mu\left(\left\{x\in 5B_\alpha;\ \left\vert T_{\alpha}(\eta_{\alpha})(x)\right\vert>\lambda\right\}\right)\lesssim \frac 1{\lambda}\left\Vert \omega\right\Vert_{L^1(B_{\alpha})},
\end{equation}

\subsubsection{The exact diagonal part}
According to Proposition \ref{pro:CZ} in Appendix B applied to $T_\alpha$, the inequality \eqref{talpha1} will follow from the following pair of inequalities: for every (admissible) sub-ball $B\subset 2B_\alpha$, and every function $u$ supported in $B$,

\begin{equation}\label{eq:1.1}
\left(\frac{1}{V(2^{j+1}B)}\int_{C_j(B)\cap 5B_\alpha}|T_\alpha(I-A_{r(B)})(du)|^2\right)^{1/2}\leq g(j)\frac{1}{V(B)}\int_B |du|,\quad j\geq 2,
\end{equation}
as well as

\begin{equation}\label{eq:1.2}
\left(\frac{1}{V(2^{j+1}B)}\int_{C_j(B)\cap 5B_\alpha}|A_{r(B)}(du)|^2\right)^{1/2}\leq g(j)\frac{1}{V(B)}\int_B |du|,\quad j\geq 1,
\end{equation}
where  $C_1(B):=4B$, $C_j(B):=2^{j+1}B\setminus 2^jB$ for all $j\ge 2$ and $A_{r(B)}$ is a smoothing operator to be defined and $\sum_{j=1}^\infty g(j)2^{jD}<\infty$.

\medskip


Fix $\alpha\in \N$,  let $B\subset 2 B_\alpha$ be a sub-ball, and let $r:=r(B)$. Given $t>0$, define the operator 

$$\psi(\Delta_1):=e^{-t\Delta_1}(I-e^{-r^2\Delta_1})^m,$$
where $m$ will be chosen big enough later. According to \cite[Equations (2.6) and (4.3)]{Auscher2007}, one has

$$\psi(\Delta_1)=\int_{\Gamma_\pm}e^{-z\Delta_1}\eta_\pm(z)\,dz,$$
where $\Gamma_{\pm}$ is the half-ray $\R_+e^{\pm i\left(\frac{\pi}2-\theta\right)}$ for a suitable $\theta\in \left(0,\frac{\pi}2\right)$ and $\eta_\pm(z)$ is a complex function satisfying the estimate

$$|\eta_\pm(z)|\lesssim \frac{1}{|z|+t}\inf\left(1,\frac{r^{2m}}{(|z|+t)^m}\right).$$
Using Lemma \ref{off-diag2} and following the argument in \cite[p.27-28]{Auscher2007}, one obtains

\begin{equation}\label{eq:est_psi}
\begin{array}{rcl}
||V(\cdot,\sqrt{t})^{1/2}\sqrt{t}d^* (t^2\Delta_1)^N \psi(\Delta_1)(du)||_{L^2(C_j(B))}  &\lesssim &\frac{1}{4^{jm}}\left(\frac{t}{4^jr^2}\right)^{D/2}\times \\\\
&\inf&\left(\left(\frac{t}{4^jr^2}\right)^{1/2},\left(\frac{4^j r^2}{t}\right)^{m-1/2}\right)||du||_{L^1(B)},
\end{array}
\end{equation}
for every $u$ supported in the admissible ball $B$. If one now lets

$$\varphi(\Delta_1):=e^{-t^2\Delta_1}(I-e^{-r^2\Delta_1})^m,$$
then by \eqref{eq:est_psi}, one gets

\begin{equation}\label{eq:est_phi}
||V(\cdot,t)^{1/2}td^* (t^2\Delta_1)^N\varphi(\Delta_1)(du)||_{L^2(C_j(B))} \lesssim 4^{-jm}\left(\frac{t}{2^jr}\right)^{D+1}\inf\left(1,\left(\frac{2^j r}{t}\right)^{2m}\right)||du||_{L^1(B)}.
\end{equation}

We now define the smoothing operator $A_{r(B)}$ by

$$A_{r(B)}=I-(I-e^{-r(B)^2\Delta_1})^m.$$
We need to check that \eqref{eq:1.1} and \eqref{eq:1.2} hold. In what follows, for simplicity we will simply write $r$ instead of $r(B)$. 

\medskip

{\bf Proof of \eqref{eq:1.2}:} this uses the estimate in Lemma \ref{off-diag1}. Indeed, we first notice that $A_{r(B)}$ is a linear combination of terms $e^{-kr^2\Delta_1}$, $k=1,\cdots,m$, and it suffices to treat independently each of these terms. In what follows, we will thus fix an integer $k$ between $1$ and $m$. For every $j\geq1$, letting $F=C_j(B)\cap 5B_\alpha$, one has

$$r(B)\lesssim r(x)+1,\quad \forall x\in F.$$
Indeed, the inequality is trivial for $\alpha=0$, and for $\alpha\neq 0$, $r(x)\geq r(x_{\alpha})-5r_{\alpha}\ge (2^9-5)r_{\alpha}\ge \frac{(2^9-5)}{2}r(B)$.  Consequently, by Lemma \ref{off-diag1}, for every $u$ with support in $B$ and $du\in L^1$,

$$||V(\cdot,r\sqrt{k})^{1/2}e^{-kr^2\Delta_1}(du)||_{L^2(C_j(B)\cap 5B_\alpha)}\lesssim e^{-c4^j}||du||_{L^1(B)}.$$
By doubling, if $x\in C_j(B)$, then

$$\begin{array}{rcl}
\dsp\frac{V(x_B,r)}{V(x,r\sqrt{k})}&\leq& \dsp\frac{ V(x,r(1+2^{j+1}))}{V(x,r\sqrt{k})}\\\\
&\lesssim& (1+2^j)^D\\
&\lesssim& 2^{D j}.
\end{array}$$
Hence,

$$V(B)^{1/2}||e^{-kr^2\Delta_1}(du)||_{L^2(C_j(B)\cap 5B_\alpha)}\lesssim 2^{jD/2} e^{-c4^j}||du||_{L^1(B)}.$$
Therefore,

$$V(B)^{1/2}\left(\frac{ V(B)}{ V(2^{j+1}B)}\right)^{1/2}||e^{-kr^2\Delta_1}(du)||_{L^2(C_j(B)\cap 5B_\alpha)}\lesssim 2^{jD/2} e^{-c4^j}||du||_{L^1(B)},$$
which implies that \eqref{eq:1.2} holds with $g(j)\simeq 2^{jD/2}e^{-c4^j}$.





\cqfd

\medskip

{\bf Proof of \eqref{eq:1.1}:} this uses \eqref{eq:est_phi}. Given $j\geq 2$, we write $ T_\alpha(1-A_{r(B)})(du)\le S_\alpha(du)+L_\alpha(du)$, where

$$S_\alpha \omega(x)=\left(\int_{d(x,z)\leq t,\,\,t\leq r_\alpha\wedge 2^{j-1}r}\chi_{4B_\alpha}(z)|td^*(t^2\Delta_1)^Ne^{-t^2\Delta_1}(I-e^{-r(B)^2\Delta_1})^m\omega(z)|^2\,\frac{d\mu(z)}{V(z,t)}\frac{dt}{t}\right)^{1/2}$$
and

$$L_\alpha \omega(x)=\left(\int_{d(x,z)\leq t,\,\,2^{j-1}r^\leq t\leq r_\alpha}\chi_{4B_\alpha}(z)|td^*(t^2\Delta_1)^Ne^{-t^2\Delta_1}(I-e^{-r(B)^2\Delta_1})^m\omega(z)|^2\,\frac{d\mu(z)}{V(z,t)}\frac{dt}{t}\right)^{1/2}$$
(of course, $L_\alpha$ is non-zero only if $2^{j-1}r\leq r_\alpha$). Let 

$$F_t(z):=|td^*(t^2\Delta_1)^Ne^{-t^2\Delta_1}(I-e^{-r(B)^2\Delta_1})^m(du)(z)|. $$
We need to estimate

$$\begin{array}{rcl}
I&:=&\dsp\left(\frac{1}{V(2^{j+1}B)}\int_{C_j(B)}|S_\alpha(du)(x)|^2\,d\mu(x)\right)^{1/2}\\\\
&\leq &\dsp \left(\frac{1}{V(2^{j+1}B)}\int_{C_j(B)}\int_{d(x,z)\leq t,\,\,t\leq 2^{j-1}r}|F_t(z)|^2\,\frac{d\mu(z)dt}{tV(z,t)}d\mu(x)\right)^{1/2}
\end{array}$$
as well as

$$\begin{array}{rcl}
II&:=&\dsp\left(\frac{1}{V(2^{j+1}B)}\int_{C_j(B)}|L_\alpha(du)(x)|^2\,d\mu(x)\right)^{1/2}\\\\
&\leq &\dsp \left(\frac{1}{V(2^{j+1}B)}\int_{C_j(B)}\int_{d(x,z)\leq t,\,\,2^{j-1}r\leq t}|F_t(z)|^2\,\frac{d\mu(z)dt}{tV(z,t)}d\mu(x)\right)^{1/2}.
\end{array}$$
According to \eqref{eq:est_phi}, one has

$$
||V(\cdot,t)^{1/2}F_t||_{L^2(C_j(B))}\lesssim \left\{\begin{array}{lll}
\dsp 4^{-jm}\left(\frac{t}{2^jr}\right)^{D+1}||du||_{L^1(B)},&& t\leq 2^jr.\\\\
\dsp 4^{-jm}\left(\frac{2^jr}{t}\right)^{2m-D-1}||du||_{L^1(B)},&&t\geq 2^jr.
\end{array}
\right.
$$
Furthermore, doubling and reverse doubling imply that if $z\in C_j(B)$ and $t\ge 2^jr$, then

$$\frac{V(z,t)}{V(x_B,r)}\geq \frac{V(z,t)}{V(z,2^{j+2}r)}\geq C\inf \left(\left(\frac{t}{2^jr}\right)^\nu,\left(\frac{t}{2^jr}\right)^D\right).$$
Hence,

$$
V(B)^{1/2}||F_t||_{L^2(C_j(B))}\lesssim \left\{\begin{array}{lll}
4^{-jm}\left(\frac{t}{2^j r}\right)^{\frac D2+1}||du||_{L^1(B)},&& t\leq 2^jr.\\\\
4^{-jm}\left(\frac{2^jr}{t}\right)^{2m-D+\frac{\nu}2-1}||du||_{L^1(B)},&&t\geq 2^jr.
\end{array}
\right.
$$
One deduces that

\begin{equation}\label{eq:F_t}
V(B)^{1/2}||F_t||_{L^2(C_j(B))}\lesssim \left\{\begin{array}{lll}
4^{-jm}\left(\frac{t}{2^j r}\right)||du||_{L^1(B)},&& t\leq 2^jr.\\\\
4^{-jm}\left(\frac{2^jr}{t}\right)^{2m-D+\frac{\nu}2-1}||du||_{L^1(B)},&&t\geq 2^jr.
\end{array}
\right.
\end{equation}
\medskip

\noindent {\bf Estimate of $I$:} for $t\leq 2^{j-1}r$ and $x\in C_j(B)$, $d(x,z)\leq t$ implies that $z$ belongs to $C_{j-1}(B)\cup C_j(B)\cup C_{j+1}(B)$. Furthermore, if $z$ is fixed, then the measure of the set

$$\{x\in C_j(B)\,;\,d(x,z)\leq t\}$$
is by definition at most $V(z,t)$. Therefore, by Fubini and \eqref{eq:F_t}, we obtain

$$\begin{array}{rcl}
V(B) \cdot I&\leq& \dsp\left(\frac{V(B)}{V(2^{j+1}B)}\int_0^{2^{j-1}r} V(B)||F_t||^2_{L^2(C_{j-1}(B)\cup C_{j}(B)\cup C_{j+1}(B))}\,\frac{dt}{t}\right)^{1/2}||du||_{L^1(B)}\\\\
&\lesssim & \dsp \left(2^{-j\nu}\int_0^{2^{j-1}r} 16^{-jm}\left(\frac{t}{2^{j}r}\right)^2\,\frac{dt}{t}\right)^{1/2}||du||_{L^1(B)}\\\\
&=& C2^{-j(2m+\nu)/2}||du||_{L^1(B)}.
\end{array}$$
Therefore,

$$I\lesssim 2^{-j(2m+\nu)/2}\cdot \frac{1}{V(B)}||du||_{L^1(B)}.$$

\medskip

\noindent {\bf Estimate of $II$:} let $t\geq 2^{j-1}r$,  and $i=i(t)$ be the  lowest integer such that 

$$2^{i}r\geq t.$$
Then, for $x\in C_j(B)$, $d(x,z)\leq t$ implies that $z$ belongs to $2^{i+3}B$. 
We bound the integral 

$$\int_{C_j(B)}\int_{d(x,z)\leq t}|F_t(z)|^2\,d\mu(z)d\mu(x)$$
by

$$V(2^{j+1}B)\cdot \sum_{k\leq i(t)+2} ||F_t||_{L^2(C_k(B))}^2.$$
By \eqref{eq:F_t}, we get

$$\begin{array}{rcl}
\dsp\int_{C_j(B)}\int_{d(x,z)\leq t}|F_t(z)|^2\,d\mu(z)d\mu(x)&\leq& \dsp \frac{V(2^{j+1}B)}{V(B)} \sum_{ k\leq i+2} V(B) \cdot ||F_t||^2_{L^2(C_k(B))}\\\\
&\lesssim & \dsp \frac{V(2^{j+1}B)}{V(B)} \left(\sum_{k\leq i+2}4^{-2km}\left(\frac{2^kr}{t}\right)^{4m-2D+\nu-2}\right)||du||^2_{L^1(B)}\\\\
&\lesssim&\dsp \frac{V(2^{j+1}B)}{V(B)}\left(\frac{r}{t}\right)^{4m-2D+\nu-2}\left(\sum_{k\leq i+2}4^{-k(D-\frac{\nu}2+1)}\right)||du||^2_{L^1(B)}\\\\
&\lesssim& \dsp V(2^{j+1}B) \left(\frac{r}{t}\right)^{4m-2D+\nu-2}\,\frac{1}{V(B)}\cdot ||du||^2_{L^1(B)},
\end{array}$$
where the last line follows from the fact that $D-\frac{\nu}2+1>0$ (since $\nu\leq D$). Also, if $z\in C_j(B)$ and $t\geq 2^{j-1}r$, by doubling

$$\frac{V(B)}{V(z,t)}\leq \frac{V(z,8t)}{V(z,t)}\lesssim 1.$$
Consequently,

$$\frac{1}{V(2^{j+1}B)}\int_{C_j(B)}\int_{d(x,z)\leq t}|F_t(z)|^2\,\frac{d\mu(z)d\mu(x)}{V(z,t)}\lesssim \left(\frac{r}{t}\right)^{4m-2D+\nu-2}\,\frac{1}{V(B)^2}\cdot ||du||^2_{L^1(B)}.$$
Therefore,

$$\begin{array}{rcl}
II&\lesssim & \dsp\left(\int_{2^{j-1}r}^\infty \left(\frac{r}{t}\right)^{4m-2D+\nu-2}\frac{dt}{t}\right)^{1/2}\frac{1}{V(B)}\cdot ||du||_{L^1(B)}\\\\
&\lesssim &\dsp 2^{-j(2m-D+\frac{\nu}2-1)}\frac{1}{V(B)}\cdot ||du||_{L^1(B)}.
\end{array}$$

\medskip

Collecting the estimates, and choosing $m>2D+1$, one gets \eqref{eq:1.1} with $g(j)\simeq 2^{-j(2m+\nu)/2}+2^{-j(2m-D+\frac{\nu}2-1)}\lesssim 2^{-jm}$. Since $m>2D+1$, one has

$$\sum_jg(j)2^{jD}<\infty,$$
which concludes the proof of \eqref{eq:1.1}.

\cqfd

\subsubsection{The non-exact diagonal part}
We now prove \eqref{talpha2}. One has

$$T_\alpha \eta_\alpha(x)=\left(\int_{d(x,z)\leq t<r_\alpha}\chi_{4B_\alpha}(z)|t(t^2\Delta_0)^Ne^{-t^2\Delta_0}d^*\eta_\alpha(z)|^2\,\frac{d\mu(z)}{V(z,t)}\frac{dt}{t}\right)^{1/2}.$$
Define the function $g_\alpha$ by

$$g_\alpha:=r_\alpha d^*\eta_\alpha,$$
so that $g_\alpha$ is supported in $B_\alpha$, and according to \eqref{L1alpha},

\begin{equation}\label{eq:g_alpha}
||g_\alpha||_1\lesssim ||\omega||_{L^1(B_\alpha)}.
\end{equation}
Then,

$$\begin{array}{rcl}
T_\alpha \eta_\alpha(x)&=&\dsp \left(\int_{ d(x,z)\leq t<r_\alpha}\chi_{4B_\alpha}(z)|(t^2\Delta_0)^Ne^{-t^2\Delta_0}g_\alpha(z)|^2\,\left(\frac{t}{r_\alpha}\right)^2 \frac{d\mu(z)}{V(z,t)}\frac{dt}{t}\right)^{1/2}\\\\
&\leq &\dsp \left(\int_{ d(x,z)\leq t<r_\alpha}|(t^2\Delta_0)^Ne^{-t^2\Delta_0}g_\alpha(z)|^2\,\left(\frac{t}{r_\alpha}\right)^2  \frac{d\mu(z)}{V(z,t)}\frac{dt}{t}\right)^{1/2}.
\end{array}$$
Thus, for $r>0$, we are led to consider the following non-tangential functional:

$$R_rg(x):=\left(\int_{d(x,z)\leq t\leq r}|(t^2\Delta_0)^Ne^{-t^2\Delta_0}g(z)|^2\, \left(\frac{t}{r}\right)^2 \frac{d\mu(z)}{V(z,t)}\frac{dt}{t}\right)^{1/2}.$$
We claim that there exists a constant $C$ independent of $r>0$ such that, for every $g\in L^1(M)$,
\begin{equation}\label{eq:R_r}
\mu(\{x\,;\,|R_rg(x)|>\lambda\})\leq C\frac{1}{\lambda}||g||_1,\quad\forall \lambda>0.
\end{equation}
This claim, together with \eqref{eq:g_alpha}, readily implies \eqref{talpha2}. 

\medskip

Let us first check that $R_r$ is bounded on $L^2$, with 

$$||R_r||_{2\to 2}\leq C,$$
independent of $r>0$. One has

$$\begin{array}{rcl}
\dsp\int_M|R_rg(x)|^2\,d\mu(x)&=&\dsp \int_M\int_{d(x,z)\leq t\leq r}|(t^2\Delta_0)^Ne^{-t^2\Delta_0}g(z)|^2\, \left(\frac{t}{r}\right)^2 \frac{d\mu(z)}{V(z,t)}\frac{dt}{t}\\\\
&=&\dsp \int_M\int_0^{r} |(t^2\Delta_0)^Ne^{-t^2\Delta_0}g(z)|^2\,d\mu(z) \left(\frac{t}{r}\right)^2 \frac{dt}{t}d\mu(x)\\\\
&=&\dsp \int_0^{r} ||(t^2\Delta_0)^Ne^{-t^2\Delta_0}g||^2_2\, \left(\frac{t}{r}\right)^2\frac{dt}{t}\\\\
&\leq& \dsp ||g||_2^2\left(\int_0^{r} \left(\frac{t}{r}\right)^2 \frac{dt}{t}\right)\\\\
&=&||g||_2^2.
\end{array}$$
On the other hand, one notices that for every $r>0$,

$$R_rg(x)\leq Rg(x):=\left(\int_{d(x,z)\leq t}|(t^2\Delta_0)^Ne^{-t^2\Delta_0}g(z)|^2\,\frac{d\mu(z)}{V(z,t)}\frac{dt}{t}\right)^{1/2}.$$
This inequality, as well as the Gaussian estimates satisfied by \Bk $(t^2\Delta_0)^Ne^{-t^2\Delta_0}$ \Bk allow one to show that the hypotheses of Theorem 1.1 in \cite{Auscher2007} for $p_0=1$ are satisfied for $R_r$ with the choice $A_{r(B)}=I-(I-e^{-r(B)^2\Delta_0})^m$, $m\gg1$, and with constants that are bounded independently of $r>0$. As a consequence of this theorem, \eqref{eq:R_r} holds.

\section*{Appendix A: two lemmata on volume}
The following two lemmata are of frequent use in the present work:
\begin{lemma} \label{integration}
Let $A>0$, $1\leq p<\nu$. Then
$$
\int_{r(y)\leq A} \frac 1{r(y)^p}d\mu(y)\lesssim A^{-p}V(o,A).
$$
\end{lemma}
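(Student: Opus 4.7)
My plan is to exploit a standard dyadic annular decomposition of the ball $B(o,A) = \{y \in M : r(y) \le A\}$, on each piece of which the integrand is essentially constant, and then sum using the reverse doubling condition \eqref{reverseDV} (which is precisely the place the hypothesis $p < \nu$ enters).

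More precisely, for $k \ge 0$ I would set
$$A_k := \{y \in M : A\cdot 2^{-k-1} < r(y) \le A\cdot 2^{-k}\},$$
so that $\{r(y) \le A\}$ is covered by $\{o\} \cup \bigcup_{k\ge 0} A_k$ (and $\{o\}$ has measure zero). On $A_k$ we have the pointwise bound $r(y)^{-p} \le 2^{p(k+1)} A^{-p}$, and clearly $A_k \subset B(o, A\cdot 2^{-k})$, whence
$$\int_{A_k} \frac{1}{r(y)^p}\,dy \;\le\; 2^{p(k+1)} A^{-p}\, V(o, A\cdot 2^{-k}).$$

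The reverse doubling condition \eqref{reverseDV} applied to the pair of radii $A\cdot 2^{-k} < A$ gives
$$V(o, A\cdot 2^{-k}) \lesssim 2^{-k\nu} V(o,A),$$
so that
$$\int_{A_k} \frac{1}{r(y)^p}\,dy \;\lesssim\; 2^{-k(\nu-p)}\, A^{-p}\, V(o,A).$$
Summing over $k\ge 0$, and using that $\nu > p$ so the geometric series $\sum_k 2^{-k(\nu-p)}$ converges, yields the desired bound
$$\int_{r(y)\le A} \frac{1}{r(y)^p}\,dy \;\lesssim\; A^{-p}V(o,A).$$

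There is no real obstacle here: the only thing to keep track of is that the strict inequality $p < \nu$ is precisely what makes the geometric sum converge, and the reverse doubling inequality is applied exactly once per dyadic scale. The argument is purely a volume estimate and does not use any of the curvature or heat kernel machinery developed earlier.
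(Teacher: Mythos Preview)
Your proof is correct and is essentially identical to the paper's own argument: the same dyadic annular decomposition of $B(o,A)$, the same pointwise bound on each annulus, and the same use of reverse doubling \eqref{reverseDV} to sum the resulting geometric series, with convergence guaranteed precisely by $p<\nu$.
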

\bp Splitting the integration domain into dyadic annuli, we get
$$
\begin{array}{lll}
\dsp\int_{r(y)\leq A} \frac 1{r(y)^p}d\mu(y) & = & \dsp\sum_{j\geq 0} \int_{2^{-j-1}A<r(y)\leq 2^{-j}A} \frac 1{r(y)^p}d\mu(y)\\
& \leq & \dsp A^{-p} \sum_{j\geq 0} 2^{(j+1)p}V(o,2^{-j}A)\\
& \lesssim & \dsp A^{-p} \sum_{j\geq 0} 2^{(j+1)p} 2^{-j\nu}V(o,A),
\end{array}
$$
which yields the result since $p<\nu$. 
\ep
\begin{lemma} \label{weak}
Let $m>0$ and $A\subset M$ be a measurable set such that, for all $x\in A$, $V(o,r(x))\leq m$. Then $\mu(A)\lesssim m$.
\end{lemma}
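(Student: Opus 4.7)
The plan is to use the reverse doubling condition \eqref{reverseDV} to control the possible radii $r(x)$ with $x\in A$, and then the doubling condition \eqref{DV} to turn the volume bound into a bound on $\mu(A)$. First I would set $R:=\sup_{x\in A}r(x)$, with the convention that $R\in [0,+\infty]$. The first step is to show $R<+\infty$: by \eqref{reverseDV} applied with a fixed reference radius (say $r=1$), we have $V(o,\rho)\gtrsim \rho^{\nu}V(o,1)\to +\infty$ as $\rho\to+\infty$, so any $x\in A$ with $r(x)$ large would violate $V(o,r(x))\leq m$. Hence $R$ is finite and bounded in terms of $m$.

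Next I would argue that $V(o,R)\leq m$. Pick a sequence $(x_n)\subset A$ with $r(x_n)\nearrow R$; since $B(o,R)=\bigcup_n B(o,r(x_n))$, we have $V(o,R)=\lim_n V(o,r(x_n))\leq m$ by monotone convergence. Finally, since $A\subset \overline{B(o,R)}\subset B(o,2R)$, the doubling property \eqref{DV} gives
\[
\mu(A)\leq V(o,2R)\lesssim V(o,R)\leq m,
\]
which is the desired conclusion.

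There is no serious obstacle here: the only subtle point is the initial finiteness of $R$, which relies on \eqref{reverseDV} to rule out arbitrarily large $r(x)$ with small anchored volume. Everything else is a straightforward application of the definitions and the doubling bound.
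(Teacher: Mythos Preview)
Your argument is correct and follows essentially the same route as the paper's proof: both pick a critical radius $\rho$ with $V(o,\rho)\le m$ and $A\subset B(o,2\rho)$, then apply doubling. The only cosmetic difference is that the paper takes $\rho=t:=\sup\{r>0;\ V(o,r)\le m\}$ (a threshold defined purely in terms of $m$, independent of $A$), whereas you take $\rho=R:=\sup_{x\in A}r(x)$; since $R\le t$, your choice is just a slightly smaller radius, and the continuity-from-below step (monotone convergence on an increasing union of balls) is the same in both versions.
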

\bp Define $t:=\sup\left\{r>0;\ V(o,r)\leq m\right\}$ (note that $t$ is well-defined and $t>0$, since $\lim_{s\rightarrow 0} V(o,s)=0$ and $\lim_{s\rightarrow +\infty} V(o,s)=+\infty$). Since $B(o,t)=\displaystyle \bigcup_{k\geq 1} B\left(o,t-\frac 1k\right)$, $V(o,t)\leq m$. The assumption on $A$ means that, for all $x\in A$, $r(x)\leq t$, so that $A\subset B(o,2t)$. Therefore, $\mu(A)\leq V(o,2t)\lesssim m$. \ep
\section*{Appendix B: A Calder\'on-Zygmund decomposition localized in balls} \label{czloc}
Recall that ${\mathcal M}$ denotes the uncentered Hardy-Littlewood maximal function, given by
$$
{\mathcal M}u(x):=\sup_{B\ni x} \frac 1{V(B)}\int_B \left\vert u(y)\right\vert d\mu(y),
$$
where the supremum is taken over all open balls containing $x$.

\begin{lemma}\label{CZ}
Let $B$ be a ball in $M$, and $u\in C_0^\infty(B)$. Let $1\leq q<\infty$, and assume that the Poincar\'e inequality with exponent $q$ holds for any ball $7\tilde{B}$, where $\widetilde{B}\subset 2B$. Then, there exists a constant $C>0$ depending only on the doubling constant, with the following property: for all $\lambda>\displaystyle\left(\frac{C\left\Vert \nabla u\right\Vert_q^q}{V(B)}\right)^{\frac 1q}$, there exists a denumerable collection of balls $(B_i)_{i\ge 1}\subset 2B$, a denumerable collection of $C^1$ functions $(b_i)_{i\ge 1}$ and a Lipschitz function $g$ such that:
\begin{enumerate}
\item $\displaystyle  u=g+\sum_{i\ge 1}b_i$,
\item The support of $g$ is included in $B$, and $|\nabla g(x)|\lesssim \lambda$, for a.e. $x\in B$.
\item The support of $b_i$ is included in $B_i$, and 
$$\displaystyle \int_{B_i} |\nabla b_i|^q\lesssim \lambda^q V(B_i).$$
\item $\displaystyle \sum_{i\ge 1} V(B_i)\lesssim \lambda^{-q}\int |\nabla u|^q$.
\item There is a finite upper bound $N$ for the number of balls $B_i$ that have a non-empty intersection.
\end{enumerate}
\end{lemma}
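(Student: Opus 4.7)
I would follow the classical Calder\'on--Zygmund decomposition at level $\lambda$, adapted to the ball $B$ via the hypothesis on Poincar\'e inequalities on the family $\{7\widetilde{B}:\widetilde{B}\subset 2B\}$. First, define the localized level set
$$\Omega:=\{x\in M\,:\,\mathcal{M}(\mathbbm{1}_{2B}|\nabla u|^q)(x)>\lambda^q\}.$$
The weak $(1,1)$ boundedness of $\mathcal{M}$ gives $\mu(\Omega)\lesssim \lambda^{-q}\|\nabla u\|_q^q$, so the threshold $\lambda^q>CV(B)^{-1}\|\nabla u\|_q^q$ (with $C$ large enough) forces $\mu(\Omega)<\mu(B)$. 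Since $|\nabla u|$ is supported in $B$ and $\mathcal{M}$ is controlled far from the support of the input, $\Omega$ is an open proper subset of a bounded enlargement of $2B$. Apply a Whitney-type covering lemma in the doubling space $(M,\mu)$ to produce a countable family of balls $B_i\subset 2B$ with centers $x_i\in\Omega$ and radii $r_i\simeq d(x_i,\Omega^c)$ such that $\Omega=\bigcup_i B_i$, $7B_i$ meets $\Omega^c$, neighboring balls have comparable radii, and the $B_i$'s have bounded overlap (giving point (5) and, as $\sum_i V(B_i)\lesssim \mu(\Omega)$, point (4)). Then build a smooth partition of unity $(\chi_i)$ subordinate to $(2B_i)$ with $|\nabla\chi_i|\lesssim r_i^{-1}$ (as in Lemma~\ref{lem:part}).

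Second, set
$$b_i:=(u-u_{B_i})\chi_i,\qquad g:=u-\sum_i b_i=u\,\mathbbm{1}_{\Omega^c}+\sum_i u_{B_i}\chi_i,$$
so that (1) is immediate and $\supp b_i\subset 2B_i\subset 2B$. For (3), differentiate and split
$$|\nabla b_i|\lesssim |\nabla u|\chi_i+r_i^{-1}|u-u_{B_i}|\mathbbm{1}_{2B_i}.$$
The assumed Poincar\'e inequality on $7B_i$ (legitimate since $B_i\subset 2B$), combined with doubling to pass from averages over $7B_i$ to those over $B_i$, yields
$$\int_{2B_i}|u-u_{B_i}|^q\lesssim r_i^q\int_{7B_i}|\nabla u|^q.$$
Because $7B_i$ contains a point of $\Omega^c$, a concentric ball swallowing $7B_i$ at that point has mean of $|\nabla u|^q$ bounded by $\lambda^q$; doubling then gives $\int_{7B_i}|\nabla u|^q\lesssim \lambda^q V(B_i)$, hence $\int |\nabla b_i|^q\lesssim \lambda^q V(B_i)$.

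Third, for (2), the support claim follows because $u\in C_0^\infty(B)$ implies $u_{B_i}=0$ whenever $B_i\cap B=\emptyset$, so $g$ is supported in $B$ (or in an arbitrarily small enlargement, absorbed by thickening $B$ slightly at the outset). The pointwise bound $|\nabla g|\lesssim \lambda$ splits into two regimes. On $\Omega^c$, one has $g=u$ and $|\nabla u(x)|\leq\lambda$ for a.e.\ $x$ by the Lebesgue differentiation theorem applied to $\mathcal{M}(|\nabla u|^q)$. On $\Omega$, fix $x$ in some $B_j$; then $\nabla g(x)=\sum_{i}u_{B_i}\nabla\chi_i(x)$, and the identity $\sum_i\nabla\chi_i\equiv 0$ on $\Omega$ allows one to rewrite
$$\nabla g(x)=\sum_{i:\,2B_i\ni x}(u_{B_i}-u_{B_j})\nabla\chi_i(x).$$
Only $\lesssim 1$ indices contribute and each $u_{B_i}-u_{B_j}$ is controlled by the Poincar\'e inequality on a chain of Whitney balls joining $B_i$ to $B_j$, each touching $\Omega^c$; this gives $|u_{B_i}-u_{B_j}|\lesssim r_j\lambda$ and hence $|\nabla g(x)|\lesssim \lambda$.

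\textbf{Main obstacle.} The delicate step is the pointwise estimate on $\nabla g$ in $\Omega$: the telescoping argument along overlapping Whitney balls, each of which touches $\Omega^c$, must be executed carefully to chain the Poincar\'e inequalities and extract a final $\lambda$-bound uniformly in $x$. The finite overlap property together with the Whitney comparability of radii is what makes this go through, and is what really uses the doubling assumption quantitatively; everything else (the covering, the partition of unity, the estimates on $b_i$) follows standard Calder\'on--Zygmund machinery once the Poincar\'e input on $7\widetilde{B}$ is available.
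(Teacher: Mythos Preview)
Your approach is essentially the same as the paper's: define the level set $\Omega$ of the maximal function of $|\nabla u|^q$, take a Whitney covering by balls $B_i$, set $b_i=(u-u_{B_i})\chi_i$ and $g=u-\sum_i b_i$, and verify the five properties using the Poincar\'e inequality on dilated Whitney balls together with the fact that each such dilate touches $\Omega^c$. The paper in fact gives fewer details than you do for items (2) and (3), deferring instead to \cite{AC}; your sketch of the telescoping argument for $|\nabla g|\lesssim\lambda$ on $\Omega$ is exactly what is needed there.

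One point where the paper is sharper and you are slightly loose: the inclusion $B_i\subset 2B$ required by the statement is obtained in the paper by proving directly that $\Omega\subset 2B$. Your phrase ``$\Omega$ is an open proper subset of a bounded enlargement of $2B$'' is not quite enough; the argument is that if $x\notin 2B$ and $\widetilde B\ni x$ meets $\supp(\nabla u)\subset B$, then $B\subset 3\widetilde B$, so by doubling $V(\widetilde B)^{-1}\int_{\widetilde B}|\nabla u|^q\le cV(B)^{-1}\|\nabla u\|_q^q\le \lambda^q$ provided $C\ge c^{1/q}$. With this in hand your Whitney balls (and their doubles) sit inside $2B$, and the support claims for $b_i$ and $g$ follow cleanly.
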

\begin{proof}
Define
$$
\Omega:=\left\{x\in M;\ {\mathcal M}\left(\left\vert \nabla u\right\vert^q\right)(x)>\lambda^q\right\},
$$
which is an open subset of $M$, and set $F:=M\setminus \Omega$. We first claim that $\Omega \subset 2B$. Indeed, if $x\notin 2B$, and $\tilde{B}$ is a ball containing $x$ and intersecting the support of $\nabla u$ (hence intersecting $B$), then $B\subset 3\tilde{B}$, hence $V(B)\leq cV(\tilde{B})$ by doubling. Consequently,

$$\frac{1}{V(\tilde{B})}\int_{\tilde{B}}|\nabla u|^q\leq c \frac{1}{V(B)}\int_{B}|\nabla u|^q \le cC^{-q}\lambda^q,$$
hence, if $C\geq c^{1/q}$, one obtains

$$\frac{1}{V(\tilde{B})}\int_{\tilde{B}}|\nabla u|^q\leq \lambda^q.$$
Taking the supremum over all balls $\tilde{B}$ containing $x$, one gets

$$\mathcal{M}\left(\left\vert \nabla u\right\vert^q\right)(x)\le\lambda^q,$$
and consequently $x\notin \Omega$. Therefore, we have proved that $\Omega \subset 2B$.

For all $x\in \Omega$, let $r_x:=\frac{1}{10}d(x,M\setminus \Omega)$ and $B_x:=B(x,r_x)$, so that $B_x\subset \Omega$, and $\Omega=\bigcup_{x\in \Omega}B_x$. Since the radii of the balls $B_x$ are uniformly bounded, there exists a denumerable collection of points $(x_i)_{i\ge 1}\in \Omega$ such that the balls $B_{x_i}$ are pairwise disjoint and $\Omega= \bigcup_{i\ge 1} 5B_{x_i}$. For all $i$, write $s_i:=5r_{x_i}$ and let $B_i=B(x_i,s_i)$. Notice that $B_i\subset 2B$ for all $i$. Furthermore, the balls $\frac{1}{5}B_i$ being disjoint together with doubling entail that the covering by balls $B_i$ has the finite intersection property. And by construction also, $3B_i\cap F\neq \emptyset$ for every $i$. Therefore, if one lets $\underline{B}_i:=\frac{1}{5}B_i$ and $\bar{B}_i:=3B_i$, then the families of balls $(\underline{B}_i,B_i,\bar{B}_i)_i$ form a Whitney-type covering of $\Omega$ in the sense of Coifman and Weiss (\cite{CW}). Note that, for all $i,j\ge 1$, if $B_i\cap B_j\neq \emptyset$, then $\delta^{-1}s_i\leq  s_j\leq \delta s_i$ with $\delta=3$. Indeed, let $x\in B_i\cap B_j$. Then
$$
s_i=\frac 12d(x_i,M\setminus \Omega)\le \frac 12d(x_i,x)+\frac 12 d(x,M\setminus \Omega)\le \frac 12s_i+\frac 12d(x,M\setminus \Omega),
$$
so that 
$$
s_i\le d(x,M\setminus \Omega)\le d(x,x_j)+d(x_j,M\setminus \Omega)\le 3s_j,
$$
and exchanging the roles of $i$ and $j$ proves the claim. \par
\noindent Let $(\chi_i)_{i\geq 1}$ be a partition of unity of $\Omega$, subordinated to the covering $(B_i)_{i\ge 1}$, and such that $|\nabla \chi_i|\lesssim s_i^{-1}$. Then, define

$$b_i=(u-u_{B_i})\chi_i,$$
so that $b_i$ has support in $B_i$. We also let

$$g=u-\sum_{i\ge 1}b_i.$$
According to the proof of Prop. 1.1 in \cite{AC}, $g$ is a well-defined, locally integrable function on $M$. The Lebesgue differentiation theorem implies that $|\nabla u|\leq \lambda$ a.e. on $F$. Following the proof of Prop. 1.1 in \cite{AC}, and using the $L^q$ Poincar\'e inequality for the balls $B_i$ as well as $(2\delta+1)B_i=7B_i$, the points $4.$ as well as $5.$ are easily proved.
\end{proof}

\begin{proposition}\label{pro:CZ}
Let $B\subset M$ be an admissible ball. 
Let $T$ be a real-valued sublinear operator of strong type $(2,2)$. Assume that, for all balls $\widetilde{B}\subset B$, all $u\in C_0^\infty(B)$, and for all $j\ge 2$,

\begin{equation}\label{eq:Ap1.1}
\left(\frac{1}{V(2^{j+1}\widetilde{B})}\int_{C_j(\widetilde{B})\cap 5B}|T(I-A_{r(\widetilde{B})})(du)|^2\right)^{1/2}\leq g(j)\frac{1}{V(\widetilde{B})}\int_{\widetilde{B}} |du|,\quad j\geq 2,
\end{equation}
and, for all $j\ge 1$,
\begin{equation}\label{eq:Ap1.2}
\left(\frac{1}{V(2^{j+1}\widetilde{B})}\int_{C_j(\widetilde{B})\cap 5B}|A_{r(\widetilde{B})}(du)|^2\right)^{1/2}\leq g(j)\frac{1}{V(\widetilde{B})}\int_{\widetilde{B}} |du|,\quad j\geq 1,
\end{equation}
where $(A_r)_{r>0}$ is a collection of operators acting on $1$-differential forms and $\sum_{j=1}^\infty g(j)2^{Dj}<\infty$, where $D>0$ is the doubling exponent from \eqref{VD}. Then,

$$\mu\left(\left\{x\in 5B\,; \left\vert T(du)(x)\right\vert>\lambda\right\}\right)\lesssim \frac{\left\Vert \nabla u\right\Vert_1}{\lambda},\quad u\in C_0^\infty(B).$$
\end{proposition}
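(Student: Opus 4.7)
The plan is to carry out a localized Calder\'on-Zygmund argument in the spirit of \cite{Auscher2007}. First, I would dispose of the easy regime $\lambda\le C_0 V(B)^{-1}\|\nabla u\|_1$: there the bound is immediate from doubling, since $\mu(\{x\in 5B\,;\,|T(du)(x)|>\lambda\})\le V(5B)\lesssim V(B)\lesssim \|\nabla u\|_1/\lambda$. I may therefore assume $\lambda$ lies above this threshold and apply Lemma \ref{CZ} with $q=1$ to write $u=g+\sum_i b_i$, where each $b_i$ is supported in a Whitney-type ball $B_i\subset 2B$ of radius $r_i$, with $|\nabla g|\lesssim \lambda$ a.e., $\int_{B_i}|\nabla b_i|\lesssim \lambda V(B_i)$, $\sum_i V(B_i)\lesssim \|\nabla u\|_1/\lambda$, and the $B_i$'s having bounded overlap. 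Taking differentials, $du=dg+\sum_i db_i$, and sublinearity of $T$ together with the insertion of the smoothing $A_{r_i}$ yield
$$|T(du)|\le |T(dg)|+\Big|\sum_i T((I-A_{r_i})db_i)\Big|+\Big|\sum_i T(A_{r_i}db_i)\Big|;$$
it suffices to bound the distribution function at level $\lambda/3$ for each piece.

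The good part $T(dg)$ is controlled in $L^2$: combining $|\nabla g|\lesssim \lambda$ on $\Omega$ with $|\nabla u|\le \lambda$ on $F:=M\setminus \Omega$ (from the Lebesgue differentiation theorem), together with $\mu(\Omega)\lesssim \|\nabla u\|_1/\lambda$, gives $\|dg\|_2^2\lesssim \lambda\|\nabla u\|_1$. The $L^2$-boundedness of $T$ and Chebyshev's inequality then produce the desired $\|\nabla u\|_1/\lambda$ bound.

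For the unsmoothed bad piece $\sum_i T((I-A_{r_i})db_i)$, I would use Cauchy-Schwarz on each annulus $C_j(B_i)\cap 5B$ followed by the off-diagonal estimate \eqref{eq:Ap1.1} and the control $\|db_i\|_{L^1}\lesssim \lambda V(B_i)$ to obtain, for $j\ge 2$,
$$\|T((I-A_{r_i})db_i)\|_{L^1(C_j(B_i)\cap 5B)}\lesssim g(j)\,2^{jD}V(B_i)\,\lambda,$$
after using doubling to compare $V(2^{j+1}B_i)$ and $V(B_i)$. The innermost annulus $j=1$ is handled by the $L^2$-boundedness of $T$ combined with \eqref{eq:Ap1.2} (to absorb the $A_{r_i}db_i$ term) and Cauchy-Schwarz. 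Summing over $j$ using $\sum_j g(j)2^{jD}<\infty$, and then over $i$ using $\sum_i V(B_i)\lesssim \|\nabla u\|_1/\lambda$, yields the strong $L^1$ bound $\|\sum_i T((I-A_{r_i})db_i)\|_1\lesssim \|\nabla u\|_1$, which gives the required weak-$L^1$ bound via Markov's inequality.

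The main obstacle is the smoothed piece $\sum_i T(A_{r_i}db_i)$, since a naive triangle inequality through $\sum_i V(B_i)^{1/2}$ is too wasteful to close. I would instead use the $L^2$-boundedness of $T$ to reduce to controlling $\|\sum_i A_{r_i}db_i\|_2$, then square out the norm by duality and test against an $L^2$ function $h$. Bounding $\langle A_{r_i}db_i,h\rangle_{L^2(C_j(B_i)\cap 5B)}$ via \eqref{eq:Ap1.2} on each annulus and applying Cauchy-Schwarz with the decay $g(j)2^{jD}$, together with the bounded overlap of the $(B_i)$'s (a Cotlar-Stein type almost-orthogonality argument), should produce $\|\sum_i A_{r_i}db_i\|_2^2\lesssim \lambda\|\nabla u\|_1$, and Chebyshev then concludes the weak-$L^1$ bound.
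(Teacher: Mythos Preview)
Your overall strategy matches the paper's, but there is a genuine gap in your treatment of the unsmoothed bad part on the innermost annulus $j=1$. You claim to control $\|T((I-A_{r_i})db_i)\|_{L^1(4B_i)}$ via Cauchy--Schwarz and the $L^2$-boundedness of $T$, which would reduce to bounding $\|(I-A_{r_i})db_i\|_{L^2}$. While \eqref{eq:Ap1.2} summed over $j$ does give $\|A_{r_i}db_i\|_{L^2}\lesssim \lambda V(B_i)^{1/2}$, you have no $L^2$ control on $db_i$ itself: the Calder\'on--Zygmund decomposition with $q=1$ only furnishes $\|db_i\|_{L^1}\lesssim \lambda V(B_i)$, and this cannot be upgraded without $L^2$ control on $\nabla u$ on $B_i$, which is not available. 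The paper sidesteps this by simply excising $\bigcup_i 4B_i$ from the level set (its measure is already $\lesssim \|\nabla u\|_1/\lambda$ by item 4 of Lemma~\ref{CZ} and doubling) and only estimating the unsmoothed piece on $5B\setminus\bigcup_i 4B_i$, where your $j\ge 2$ computation suffices. Incidentally, your direct $L^1$ estimate for $j\ge 2$ via Cauchy--Schwarz on each annulus is correct and is in fact \emph{simpler} than the paper's route for this term, which bounds an $L^2$ norm by duality and a maximal-function argument.

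For the smoothed piece $\sum_i A_{r_i}db_i$, your sketch is too vague to constitute a proof. Calling it ``Cotlar--Stein'' plus ``bounded overlap'' does not close the argument: after Cauchy--Schwarz on each annulus you obtain $|\langle A_{r_i}db_i,h\rangle_{C_j(B_i)}|\lesssim g(j)\lambda\, V(2^{j+1}B_i)^{1/2}\|h\|_{L^2(2^{j+1}B_i)}$, and the difficulty is that the balls $2^{j+1}B_i$ overlap heavily as $j$ grows, so bounded overlap of the $B_i$ alone does not control the sum. The paper's device is to write, for any $y\in B_i$, $\|h\|_{L^2(2^{j+1}B_i)}\le V(2^{j+1}B_i)^{1/2}\bigl(\mathcal M(|h|^2)(y)\bigr)^{1/2}$, average over $y\in B_i$, sum in $j$ using $\sum_j g(j)2^{jD}<\infty$, sum in $i$ using the bounded overlap of the $B_i$ to get $\lambda\int_{\bigcup_i B_i}\bigl(\mathcal M(|h|^2)\bigr)^{1/2}$, and then apply Kolmogorov's inequality $\int_E(\mathcal Mf)^{1/2}\lesssim \mu(E)^{1/2}\|f\|_1^{1/2}$ with $E=\bigcup_i B_i$ and $f=|h|^2$. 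This yields the bound $(\lambda\|\nabla u\|_1)^{1/2}\|h\|_2$, hence $\|\sum_i A_{r_i}db_i\|_2^2\lesssim \lambda\|\nabla u\|_1$ as required.
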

\begin{proof}
Let $\lambda>0$. If $\lambda\le \displaystyle\frac{C\left\Vert \nabla u\right\Vert_1}{V(B)}$, where $C$ is given by Lemma \ref{CZ}, then by doubling
$$
\mu( \left\{x\in 5B; \left\vert T(du)(x)\right\vert >\lambda\right\})\leq V(5B)\lesssim V(B)\le \frac{C\left\Vert \nabla u\right\Vert_1}{\lambda}.
$$
Assume now that $\lambda>\displaystyle \frac{C\left\Vert \nabla u\right\Vert_1}{V(B)}$. Decompose $u=g+b$ with $g$ and $b$ given by Lemma \ref{CZ} applied with $q=1$. One has
\begin{eqnarray*} \label{tdu}
\mu( \left\{x\in 5B;\ \left\vert T(du)(x)\right\vert>\lambda\right\})
&\le & \mu\left( \left\{x\in 5B;\ \left\vert T(dg)(x)\right\vert>\frac{\lambda}2\right\}\right)\\
&&+\mu\left( \left\{x\in 5B;\ \left\vert T(db)(x)\right\vert>\frac{\lambda}2\right\}\right).
\end{eqnarray*}
For the first term of the right-hand side of \eqref{tdu}, write
$$
\begin{array}{lll}
\displaystyle \mu\left( \left\{x\in 5B;\ \left\vert T(dg)(x)\right\vert>\frac{\lambda}2\right\}\right) & \le & \displaystyle \frac 4{\lambda^2} \left\Vert T(dg)\right\Vert_2^2\\
& \le & \displaystyle \frac C{\lambda^2} \left\Vert dg\right\Vert_2^2\\
& \le & \displaystyle \frac C{\lambda^2}\left\Vert dg\right\Vert_{\infty} \left\Vert dg\right\Vert_1\\
& \le & \displaystyle \frac C{\lambda} \left\Vert du\right\Vert_1.
\end{array}
$$
The second line follows from the $L^2$-boundedness of $T$ and the last line is due to property $2$ of Lemma \ref{CZ} and the fact (due in turn to items $3$ and $4$ of Lemma \ref{CZ}) that
$$
\left\Vert dg\right\Vert_1\le \left\Vert du\right\Vert_1+\left\Vert db\right\Vert_1\le \left\Vert du\right\Vert_1+\sum_i \left\Vert db_i\right\Vert_1\lesssim \left\Vert du\right\Vert_1+\lambda\sum_i V(B_i)\lesssim\left\Vert du\right\Vert_1.
$$
As far as the second term in the right-hand side of \eqref{tdu} is concerned, write
$$
\left\vert T\left(\sum_i db_i\right)\right\vert \le \sum_i \left\vert T(I-A_{r_i})db_i\right\vert+\left\vert T\left(\sum_i A_{r_i}db_i\right)\right\vert.
$$
This entails that it is enough to check that
\begin{equation} \label{estimI}
I:=\mu\left( \left\{x\in 5B;\ \sum_i \left\vert T(I-A_{r_i})db_i(x)\right\vert>\frac{\lambda}4\right\}\right)\le \frac{C}{\lambda} \left\Vert du\right\Vert_1
\end{equation}
and
\begin{equation} \label{estimJ}
J:=\mu\left(\left\{x\in 5B;\ \left\vert T\left(\sum_i A_{r_i}db_i\right)(x)\right\vert>\frac{\lambda}4\right\}\right)\le \frac{C}{\lambda} \left\Vert du\right\Vert_1.
\end{equation}
For $I$,
$$
I\le \mu\left( \bigcup_{i\ge 1} 4B_i\right)+\mu\left( \left\{x\in 5B\setminus \bigcup_{i} 4B_i;\ \sum_{i\ge 1}\left\vert T(I-A_{r_i})db_i(x)\right\vert>\frac{\lambda}4\right\}\right)=:I_1+I_2.
$$
On the one hand, by doubling and property $4.$ of Lemma \ref{CZ},
\begin{equation} \label{estimI1ter}
I_1\le C\sum_i V(B_i)\le \frac C{\lambda} \left\Vert \nabla u\right\Vert_1.
\end{equation}
On the other hand, the Chebyshev inequality entails
\begin{equation} \label{I2}
I_2\le \frac{16}{\lambda^2}\left\Vert \sum_i{\bf 1}_{5B\setminus \bigcup_{i} 4B_i}T(I-A_{r_i})db_i\right\Vert_{L^2(5B)}^2.
\end{equation}
Pick up a function $h\in L^2(5B)$ with $\left\Vert h\right\Vert_2=1$. One has
$$
\left\vert \int_{5B} \sum_i{\bf 1}_{5B\setminus \bigcup_{i} 4B_i}T(I-A_{r_i})db_i(x)h(x)d\mu(x)\right\vert \le \sum_{i} \sum_{j\ge 2} A_{ij},
$$
where 
$$
A_{ij}:=\int_{5B\cap C_j(B_i)} \left\vert T(I-A_{r_i})db_i(x)\right\vert \left\vert h(x)\right\vert d\mu(x).
$$
By assumption \eqref{eq:Ap1.1} and property $3$ of Lemma \ref{CZ},
$$
\left\Vert T(I-A_{r_i})db_i\right\Vert_{L^2(5B\cap C_j(B_i))}\le V(2^{j+1}B_i)^{\frac 12} g(j)\frac 1{V(B_i)}\left\Vert db_i\right\Vert_1\le V(2^{j+1}B_i)^{\frac 12} g(j) \lambda.
$$
Moreover, for all $y\in B_i$,
$$
\left\Vert h\right\Vert_{L^2(5B\cap C_j(B_i))}\le \left\Vert h\right\Vert_{L^2(2^{j+1}B_i)}\le V(2^{j+1}B_i)^{\frac 12} \left({\mathcal M}\left\vert h\right\vert^2(y)\right)^{\frac 12}.
$$
As a consequence,
$$
A_{ij}\le V(2^{j+1}B_i)g(j)\lambda \left({\mathcal M}\left\vert h\right\vert^2(y)\right)^{\frac 12}\le C2^{jD}V(B_i)g(j) \lambda \left({\mathcal M}\left\vert h\right\vert^2(y)\right)^{\frac 12},
$$
and since this holds for all $y\in B_i$,
$$
A_{ij}\le C2^{jD} g(j) \lambda \int_{B_i} \left({\mathcal M}\left\vert h\right\vert^2(y)\right)^{\frac 12}d\mu(y).
$$
It follows that $\left\vert \int_{5B} \sum_i{\bf 1}_{5B\setminus \bigcup_{i} 4B_i}T(I-A_{r_i})db_i(x)h(x)d\mu(x)\right\vert$ is bounded by 
$$
\begin{array}{lll}
\displaystyle C\lambda\sum_{j\ge 2}2^{jD} g(j)\left(\sum_i \int_{B_i} \left({\mathcal M}\left\vert h\right\vert^2(y)\right)^{\frac 12}d\mu(y)\right) & \le & \displaystyle  CN\lambda\int_{\bigcup_i B_i} \left({\mathcal M}\left\vert h\right\vert^2(y)\right)^{\frac 12}d\mu(y)\\
& \le & \displaystyle CN\lambda\left\vert  \bigcup_i B_i\right\vert^{\frac 12} \left\Vert \left\vert h\right\vert^2\right\Vert_1^{\frac 12}\\
& \le & \displaystyle C\lambda^{1/2}\left\Vert \nabla u\right\Vert_1^{1/2},
\end{array}
$$
and \eqref{I2} shows that
\begin{equation} \label{estimI2ter}
I_2\le \frac C{\lambda} \left\Vert \nabla u\right\Vert_1.
\end{equation}
Gathering \eqref{estimI1ter} and \eqref{estimI2ter} yields \eqref{estimI}.\par
\noindent For $J$, the $L^2$-boundedness of $T$ gives
$$
J\le \frac{16}{\lambda^2}\left\Vert T\left(\sum_i A_{r_i}db_i\right)\right\Vert_{L^2(5B)}^2\le \frac{C}{\lambda^2} \left\Vert \sum_i A_{r_i}db_i\right\Vert_{L^2(M)}^2.
$$
Pick up again a function $h\in L^2(M)$ with $\left\Vert h\right\Vert_2=1$. Then
$$
\begin{array}{lll} 
\displaystyle \left\vert \int_M \left(\sum_i A_{r_i}db_i(x)\right)h(x)d\mu(x)\right\vert & \le & \displaystyle \sum_i \left(\sum_{j\ge 1} \int_{C_j(B_i)} \left\vert A_{r_i}db_i(x)\right\vert \left\vert h(x)\right\vert d\mu(x)\right):=\sum_i \sum_{j\ge 1} B_{ij}.
\end{array}
$$
For all $i,j$, using \eqref{eq:Ap1.2}, one obtains
$$
\begin{array}{lll}
\displaystyle B_{ij} & \le & \displaystyle V(2^{j+1}B_i)^{1/2}g(j)\frac 1{V(B_i)}\left\Vert db_i\right\Vert_{1}\\
& \le & \displaystyle CV(2^{j+1}B_i)^{1/2}g(j)\lambda,
\end{array}
$$
and arguing as before, we conclude that \eqref{estimJ} holds.
\end{proof}

\bibliographystyle{plain}               
\bibliography{hardy-quadratic}  
\end{document}